\numberwithin{equation}{section}
\newtheorem{theorem}{Theorem}[section]
\newtheorem{lemma}{Lemma}[section]
\title{On the asymptotic behavior of the one-dimensional motion of the polytropic ideal gas with degenerate heat conductivity}
\date{  }
\author{}
\date{ }
\author{Guocai Cai$^a$, Yan-Fang Peng$^b$, Yi Peng$^c$\thanks{
  Email addresses: gotry@xmu.edu.cn (G.C. Cai),  pyfang2005@sina.com (Y.F. Peng),  pengyi16@mails.ucas.ac\quad.cn(Y. Peng)}  \\[3mm]  a. School of Mathematical Sciences, Xiamen University,\\
Xiamen 361005, P. R. China; \\
b. School of Mathematical Sciences,   Guizhou Normal University,  \\Guiyang, 550001, P. R. China;  \\c. School of Mathematical Sciences,\\
  University of Chinese Academy of Sciences, Beijing 100049, P. R. China}
\begin{document}
\maketitle

\begin{abstract}

We consider the one dimensional compressible Navier-Stokes system   with  constant viscosity   and the nonlinear  heat conductivity being  proportional to a positive power of the temperature which   may be degenerate.  This problem is imposed on the stress-free boundary condition, which reveals the motion of a viscous heat-conducting perfect polytropic gas   with adiabatic ends   putting into a vacuum.    We prove that  the solution of  one dimensional compressible Navier-Stokes system   with  the stress-free boundary condition  shares the same large-time behavior as the case of constant  heat conductivity.
 \end{abstract}
\noindent{\bf Keywords:}  Compressible Navier-Stokes system; One-dimension;
Degenerate heat conductivity;  Stress-free

\section{Introduction}

In this paper, we investigate the one-dimensional compressible Navier-Stokes system, which reveals the motion of a viscous heat-conducting perfect polytropic gas   and has the following form in the Lagrangian mass coordinates:
\begin{gather}
u_{t}=v_{x}  \label{as1},\\
v_{t}=\left(-R\frac{\theta}{u}+\mu\frac{v_{x}}{u}\right)_{x},\label{as2}\\
c_{\nu}\theta_{t}=-R\frac{\theta v_{x}}{u}+\mu\frac{v_{x}^2}{u}+\left(\frac{\kappa\theta_{x}}{u}\right)_{x},\label{as3}
\end{gather}
where $t>0$ is time, $x\in \Omega=(0,1) $ denotes the Lagrange mass coordinate, the unknown functions $u$, $v$, $\theta$ are the specific volume, velocity, absolute temperature of the gas respectively. The system is studied subject to the given initial data
\begin{gather}
(u,v,\theta)(x,0)=(u_{0},v_{0},\theta_{0})(x),\quad u_{0}>0,\quad \theta_{0}>0,\label{1523}
\end{gather}
and the boundary conditions
\begin{gather}
	\left(-R\frac{\theta}{u}+\mu\frac{v_{x}}{u} \right)(0,t)=\left(-R\frac{\theta}{u}+\mu\frac{v_{x}}{u}\right) (1,t)=0,\quad
	\theta_{x}(0,t)=\theta_{x}(1,t)=0,\label{2153}
\end{gather}
 and certainly the initial data \eqref{1523} should be compatible with the boundary conditions \eqref{2153}. In \eqref{2153}, both specific gas constant $R$ and heat capacity at constant volume $c_{\nu}$ are positive constants, and $\mu$, $\kappa$ satisfy
\begin{gather}
\mu=\tilde{\mu}(1+\alpha u^{-\alpha}), \quad \kappa=\tilde{\kappa}\theta^\beta,\label{uk1}
\end{gather}
with constants $\tilde {\mu}$, $\tilde {\kappa}>0$ and $\alpha$, $\beta \geq 0$.
Without loss of generality, we assume that $\tilde{\mu}=\tilde{\kappa}=R=c_{\nu}=1$.

This system is a model of the one-dimensional motion of the polytropic ideal gas with adiabatic ends  putting into a vacuum, and the condition \eqref{2153} is called the stress-free one. It is easy to find that, when $\alpha=0$, the system has a trivial solution
\begin{gather}
u(x,t)=c(1+t),\quad v(x,t)=c(x-\frac{1}{2}),\quad \theta(x,t)=c,\label{as4}
\end{gather}
corresponding to initial data
\begin{gather}
u_{0}(x)=c,\quad v_{0}(x)=c(x-\frac{1}{2}),\quad \theta_{0}(x)=c,
\end{gather}
where $c$ is a positive constant.

For constant coefficients $(\alpha=\beta=0)$ with large initial data, Kazhikhov and Shelukhin \cite{9} first obtained the global existence of solutions under the following boundary conditions:
 \begin{gather}
v(0,t)=v(1,t)=0,\quad \theta_{x}(0,t)=\theta_{x}(1,t)=0,\label{2523}
\end{gather}
which mean that the gas is confined into a fixed tube with impermeable gas. From then on, significant progress has been made on the mathematical aspect of the initial boundary value problems, see \cite{1,2,3} , \cite{15,18,17} and the references therein.

Kawohl \cite{ssyy11}, Jiang \cite{ssyy22,ssyy33} and Wang \cite{ssyy44} established the global existence of smooth solutions for \eqref{as1}-\eqref{as3}, \eqref{1523} with boundary condition of either \eqref{2153} or \eqref{2523} under the assumption $\mu (u)\geq \mu_{0}>0$
for any $u>0$ and $\kappa$ may depend on both density and temperature. Under the assumption that $\alpha=0$ and $\beta\in (0,3/2)$, Jenssen-Karper
\cite{24} proved the global existence of a weak solution to \eqref{as1}-\eqref{2153}. Later, the global existence of  strong solutions is obtained by  Pan-Zhang \cite{28} when $\alpha=0$ and $\beta\in(0,\infty)$. In \cite{28}, they only consider the case of stress-free and heat insulated boundary conditions. For the case of stress-free and
heat insulated boundary condition, Duan-Guo-Zhu \cite{7} obtain the global strong solutions of \eqref{as1}-\eqref{uk1} under the initial condition that
\begin{gather}
(u_{0},v_{0},\theta_0)\in H^{1}\times H^{2}\times H^{2}.\label{dgz}
\end{gather}

Recently, Huang-Shi-Sun \cite{1212} have proved the existence and uniqueness of global strong solutions to \eqref{as1}-\eqref{uk1} for $\alpha\geq 0$ and $\beta\in(0,\infty)$
with the condition on the initial data
\begin{gather}
(u_{0},v_{0},\theta_{0})\in H^{1}.
\end{gather}
More precisely, they gave the following results:
\newtheorem{myth}{Proposition}[section]
\begin{myth}\label{Proposition1}
(\cite{1212}) Suppose that
\begin{gather}
\alpha \geq 0,\quad  \beta>0, \notag
\end{gather}
and the initial data $(u_{0}, v_{0}, \theta_{0})$ satisfies
\begin{gather}
u_{0}, v_{0}, \theta_{0} \in H^1(0,1),
\end{gather}
and
\begin{gather}
\inf_{x\in (0,1)}u_0(x)>0, \quad \inf_{x\in (0,1)}\theta_0(x)>0.\label{u0v0}
\end{gather}
Then the initial-boundary-value problem \eqref{as1}-\eqref{uk1} has a unique global strong  solution $(u,v,\theta)$
such that for each $T>0$,
\begin{equation}
\left\{ \begin{array}{l}
u,v,\theta \in L^{\infty}(0,T;H^{1}(0,1)),\\ \label{klmk}
u_{t}\in L^{\infty}(0,T;L^{2}(0,1))\cap L^{2}(0,T;H^{1}(0,1)),\\
v_{x},\theta_{x},v_{t},\theta_{t},u_{xt},v_{xx},\theta_{xx}\in L^{2}((0,T)\times(0,1)).
       \end{array} \right.\\
\end{equation}
Moreover,
\begin{gather}
C_{0}^{-1}\leq u(x,t)\leq C_{0}, \quad C_{0}^{-1}\leq \theta(x,t)\leq C_{0},
\end{gather}
where $C_{0}$ is a positive constant depending on the initial data and $T$.
\end{myth}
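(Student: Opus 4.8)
The plan is to follow the classical two-step scheme for one-dimensional compressible Navier--Stokes equations: first construct a local-in-time strong solution, then prove a priori estimates on an arbitrary interval $[0,T]$, with constants depending only on the data in \eqref{u0v0} and on $T$, that allow the local solution to be continued up to time $T$. For the local part one uses that $\inf u_0>0$ and $\inf\theta_0>0$ force $u,\theta$ to stay positive on a short interval, so by \eqref{uk1} equations \eqref{as2}--\eqref{as3} are a uniformly parabolic system for $(v,\theta)$ while $u$ is recovered from $u(x,t)=u_0(x)+\int_0^t v_x\,ds$; a contraction argument in a ball of $C([0,T_\ast];H^1)$, together with parabolic smoothing for the time-integrated bounds in \eqref{klmk}, gives a unique local solution, and uniqueness on the whole interval of existence follows from a Gronwall estimate for the difference of two solutions.

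For the a priori estimates I would start with the conserved/controlled quantities: testing \eqref{as2} by $v$ and \eqref{as3} by $1$ and adding, the boundary contributions cancel by \eqref{2153}, so the total energy $\int_0^1(\tfrac12 v^2+c_\nu\theta)\,dx$ is conserved; hence $\|v(t)\|_{L^2}$ and $\int_0^1\theta\,dx$ are bounded, and from $u_t=v_x$ one gets $\frac{d}{dt}\int_0^1 u\,dx=v(1,t)-v(0,t)$, so $\int_0^1 u\,dx\le C(1+t)$. Next, the entropy $s=c_\nu\ln\theta+R\ln u$ satisfies $s_t=(\kappa\theta_x/(u\theta))_x+\kappa\theta_x^2/(u\theta^2)+\mu v_x^2/(u\theta)$; integrating in $x$ and using that $\int_0^1 s\,dx$ is nondecreasing and, by Jensen's inequality, bounded above in terms of the energy and $\int_0^1 u\,dx$, one obtains $\int_0^T\!\!\int_0^1\big(\theta^{\beta-2}\theta_x^2/u+v_x^2/(u\theta)\big)\,dx\,dt\le C_T$. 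Using $v_x=u_t$ in \eqref{as2} and integrating in $x$ (the boundary terms vanish again by \eqref{2153}) produces a Kazhikhov-type representation expressing $\ln u-u^{-\alpha}$ through the bounded potential $\int_0^x v\,dz$ and the nonnegative quantity $\int_0^t\theta/u\,ds$. Since the potential is controlled by the energy, this immediately gives the pointwise lower bound $u(x,t)\ge c_\ast>0$ — the extra $\alpha u^{-\alpha}$ in $\mu$ only helping; the matching upper bound $u(x,t)\le C_T$ is coupled, through the term $\int_0^t\theta/u\,ds$, to a pointwise bound on $\theta$, and I would establish the two together by a continuity (bootstrap) argument.

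To bound $\theta$, on the subinterval where $u$ is a priori controlled, \eqref{as3} is uniformly parabolic as long as $\theta$ stays positive. For the upper bound I would work with the heat potential $\Theta=\theta^{\beta+1}/(\beta+1)$, so that $(\kappa\theta_x/u)_x=(\Theta_x/u)_x$, combine a Gagliardo--Nirenberg interpolation of the type $\|\theta^{\beta/2}\|_{L^\infty}^2\lesssim\|\theta^{\beta/2}\|_{L^1}\|(\theta^{\beta/2})_x\|_{L^2}+\|\theta^{\beta/2}\|_{L^1}^2$ with the dissipation bound above, and close by Gronwall to get $\theta(x,t)\le C_T$ (whence also $u\le C_T$ by the representation). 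For the lower bound, $1/\theta$ obeys a differential inequality whose principal part inherits the favorable sign of the entropy dissipation and whose source is $O(1/u)=O(1)$, so that after integration in $x$, one more interpolation step, and Gronwall one obtains $\theta(x,t)\ge c_T>0$. With $c_T^{-1}\le u,\theta\le C_T$ secured, the higher-order bounds in \eqref{klmk} follow by the usual energy method: testing \eqref{as2} by $v_t$ and \eqref{as3} by $\theta_t$ controls $\|v_x\|_{L^2}$, $\|\theta_x\|_{L^2}$ and $\int_0^T(\|v_t\|_{L^2}^2+\|\theta_t\|_{L^2}^2)\,dt$; differentiating $u_t=v_x$ in $x$ and using \eqref{as2} controls $\|u_x\|_{L^2}$, $\|u_t\|_{L^2}$; and the equations then give the $L^2_{t,x}$ bounds on $v_{xx},\theta_{xx},u_{xt}$. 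A standard continuation argument finishes the proof.

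The main obstacle is the pair of pointwise temperature estimates in the genuinely degenerate regime $\kappa=\theta^\beta$, $\beta>0$: where $\theta$ is small the heat equation degenerates, so neither bound is available from a crude maximum principle, and in addition the upper bound on $u$ and the lower bound on $\theta$ are intertwined through the representation formula. Carrying this out requires exploiting the precise algebraic structure of the entropy dissipation and of the potential $\theta^{\beta+1}$, and tracking carefully how each constant depends on $T$ and on the previously established bounds; everything else is routine once these estimates are in hand.
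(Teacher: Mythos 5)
First, a point of reference: the paper does not prove this statement. Proposition 1.1 is quoted verbatim from Huang--Shi--Sun \cite{1212} (with the $\alpha=0$, $\beta>0$ existence theory also going back to Pan--Zhang \cite{28}), and the present paper only uses it as a black box. So there is no in-paper proof to compare your sketch against; what follows is an assessment of the sketch against the known arguments in that literature.

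Your outline is the standard one (local existence by a fixed point for the uniformly parabolic system in $(v,\theta)$, energy and entropy identities, the Kazhikhov representation for $u$, then pointwise bounds on $u$ and $\theta$, then higher-order energy estimates and continuation), and you correctly identify the genuinely hard step: the pointwise bounds on $\theta$ when $\kappa=\theta^\beta$ degenerates. But precisely there the sketch has a real gap rather than a deferred routine computation. Testing \eqref{as3} with $-\theta^{-2}$ gives only $\sup_t\int_0^1\theta^{-1}\,dx\le C_T$, i.e.\ an $L^1$ bound on $1/\theta$; upgrading this to $\inf\theta\ge c_T>0$ is not ``one more interpolation step and Gronwall,'' because the interpolation $\|\theta^{-1}\|_{L^\infty}\lesssim\|\theta^{-1}\|_{L^1}+\int\theta^{-2}|\theta_x|$ requires $\bigl(\int\theta^{\beta-3}\theta_x^2\bigr)^{1/2}\bigl(\int\theta^{-1-\beta}\bigr)^{1/2}$, whose second factor is exactly the kind of higher negative moment you do not yet control; the argument is circular at this point. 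In \cite{28}, \cite{1212}, and indeed in this paper's own Lemma 2.3 for the rescaled system, the lower bound is obtained instead by testing with cut-off weights of the form $(\theta^{-1/2}-\alpha_1^{-1/2})_+^{p}\theta^{-3/2}$, closing a Gronwall inequality uniformly in $p$, and letting $p\to\infty$ (a De Giorgi/Moser-type iteration in disguise). A similar remark applies to the coupled upper bounds on $\theta$ and $u$: the bootstrap you describe can be closed, but only through specific weighted estimates on $\theta^{\beta+1}$ combined with the representation formula, not through a generic continuity argument. One smaller slip: $\frac{d}{dt}\int_0^1 u\,dx=v(1,t)-v(0,t)$ is not bounded by the conserved energy alone (it needs $\|v\|_{L^\infty}$, hence $\|v_x\|_{L^2}$), so the linear growth of $\int u\,dx$ should be read off from the representation formula rather than from this identity.
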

For the asymptotic behavior with stress-free condition, in \cite{kawa} and \cite{oka}, it indicates that any classical solution which satisfies some restricted assumptions on the initial data and the ratio converges to the state like \eqref{as4}. On the other hand, in the case that $\alpha=\beta=0$, some other asymptotic properties of the solution such as the growth rate of $u$ and $\int_{0}^{1}udx$ is given in \cite{18}. For the system with $\alpha=\beta=0$, Nagasawa \cite{15} showed the convergence of the strong solution and its rate. However, his methods depend crucially on the condition that  the heat-conductivity is  a  constant.

The goal of this paper is to further study the large time behavior of the global strong solutions obtained in \cite{1212}. We focus on the case that the heat conductivity is  proportional to a positive power of the temperature, which is nonlinear and may be degenerate, that is,  $\alpha=0,  \beta\in(0,\infty)$. And we have the following conclusion.
\newtheorem{mythm}{Theorem}[section]
\begin{mythm}\label{thm:light}
Under the conditions of Proposition \ref{Proposition1} with $\alpha=0$ and $\beta>0$, let $(u,v,\theta)$ be (unique) strong solution to \eqref{as1}-\eqref{uk1} satisfying
 \eqref{klmk} for any $T>0$. Then there exist positive constants $C$, $\lambda$ depending only on $\beta$ and initial data such that
\begin{gather}
C^{-1}\leqslant \frac{u(x,t)}{1+t} \leqslant C,\quad C^{-1}\leqslant \theta (x,t) \leqslant C,
\end{gather}
and
\begin{gather}
 \left \lVert\frac{u(x,t)}{1+t}-A,v(x,t)-\int_{0}^{1}v_{0}(x)dx-A(x-\frac{1}{2}),\theta(x,t)-A \right \lVert_{H^1(0,1)}\leq C(1+t)^{-\lambda},
\end{gather} where  $A$ satisfies
\begin{gather}
A\triangleq2\sqrt {36+3\left(2E_{0}-\left(\int_{0}^{1}v_{0}(x)dx\right)^2\right)}-12,\label{A0}
\end{gather}
with
\begin{gather}
E_{0}\triangleq\int_{0}^{1}\left(\frac{1}{2}v_{0}^2(x)+\theta_{0}(x) \right)dx. \label{E0}
\end{gather}
\end{mythm}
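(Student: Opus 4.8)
The plan is to upgrade the $T$-dependent bounds of Proposition~\ref{Proposition1} to time-uniform ones — with $u$ measured against $1+t$ — and then to extract the asymptotics by passing to the slow time $\tau=\ln(1+t)$, in which the rescaled system is essentially autonomous.

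\emph{Conservation laws and a representation for $u$.} First I would record the basic identities: integrating \eqref{as2} over $\Omega$ with the stress-free condition gives conservation of total momentum $\int_0^1 v\,dx\equiv m_0:=\int_0^1 v_0\,dx$; multiplying \eqref{as2} by $v$, adding \eqref{as3} and integrating gives conservation of total energy $\int_0^1\big(\tfrac12 v^2+\theta\big)\,dx\equiv E_0$; and the balance for $\eta:=\ln(u\theta)$ gives the entropy inequality $\frac{d}{dt}\int_0^1\eta\,dx=\int_0^1\big(\tfrac{v_x^2}{u\theta}+\tfrac{\theta^{\beta-2}\theta_x^2}{u}\big)\,dx\ge 0$. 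These already pin down \eqref{A0}: if $u/(1+t)\to A$, $v-m_0-A(x-\tfrac12)\to 0$ and $\theta\to A$, then $\int_0^1 v\,dx=m_0$ is automatic while $\int_0^1(\tfrac12 v^2+\theta)\,dx=E_0$ forces $\tfrac12 m_0^2+\tfrac{A^2}{24}+A=E_0$, i.e. \eqref{A0}. Next, with the effective stress $\sigma:=-\theta/u+v_x/u$, \eqref{as2} reads $v_t=\sigma_x$ with $\sigma(0,t)=\sigma(1,t)=0$; hence, for $\psi(x,t):=\int_0^x v(\xi,t)\,d\xi$, one has $\psi_t=\sigma=-\theta/u+(\ln u)_t$, so $\partial_t(\ln u-\psi)=\theta/u$, and integrating twice in $t$ yields
\[
u(x,t)=u_0(x)\,e^{\psi(x,t)-\psi(x,0)}\Big(1+\int_0^t\frac{\theta(x,s)}{u_0(x)\,e^{\psi(x,s)-\psi(x,0)}}\,ds\Big).
\]
Since $|\psi|\le\|v\|_{L^2}\le\sqrt{2E_0}$ uniformly and $u_0\in H^1$ with $\inf u_0>0$, this already gives $u\ge c>0$ and, more to the point, $C^{-1}\big(1+\int_0^t\theta(x,s)\,ds\big)\le u(x,t)\le C\big(1+\int_0^t\theta(x,s)\,ds\big)$, so the desired bound on $u/(1+t)$ reduces to a pointwise two-sided bound on $\theta$.

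\emph{Time-uniform estimates (the main obstacle).} The heart of the proof — and the point where Nagasawa's constant-conductivity argument \cite{15} fails — is to show $C^{-1}\le\theta(x,t)\le C$ uniformly in $t$ for the degenerate equation \eqref{as3}. I would close this by a bootstrap anchored on provisional bounds $C^{-1}(1+t)\le u\le C(1+t)$, $C^{-1}\le\theta\le C$: under them $1/u\le C(1+t)^{-1}$, and since $\int_0^1 u\,dx\le C(1+t)$ (from the representation, using $\int_0^1\theta\,dx\le E_0$) the entropy inequality gives $\int_0^T\!\int_0^1\big(\tfrac{v_x^2}{u\theta}+\tfrac{\theta^{\beta-2}\theta_x^2}{u}\big)\,dx\,dt\le C\log(2+T)$, hence the time-weighted estimate $\int_0^T(1+t)^{-1}\!\int_0^1 v_x^2\,dx\,dt\le C\log(2+T)$, together with the uniform bound $\int_0^1(\tfrac12 v^2+\theta)\,dx=E_0$. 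The upper bound $\theta\le C$ then follows from a De Giorgi / Moser iteration on \eqref{as3}, the viscous-heating source $v_x^2/u$ being absorbed via $1/u\le C(1+t)^{-1}$ and the weighted $v_x$-estimate; the lower bound $\theta\ge C^{-1}$ comes from a barrier / $L^p$-bound on $\theta^{-1}$, using that $v_x^2/u\ge 0$ prevents the temperature from collapsing. Reinserting $\theta\asymp 1$ into the representation formula closes the bootstrap and gives $C^{-1}(1+t)\le u\le C(1+t)$; the $H^1$ regularity of \eqref{klmk} is propagated along the way with the natural $t$-weights, uniformly in time. I expect this step to be the principal difficulty, the degeneracy of $\kappa=\theta^\beta$ being the culprit.

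\emph{Self-similar reformulation and decay rate.} Finally, set $n:=u/(1+t)-A$, $w:=v-m_0-A(x-\tfrac12)$, $\vartheta:=\theta-A$ and $\tau:=\ln(1+t)$, so that $\partial_\tau=(1+t)\partial_t$ and \eqref{as1}--\eqref{as3} become
\[
n_\tau+n=w_x,\qquad w_\tau=\Big(\tfrac{w_x-\vartheta}{n+A}\Big)_x,\qquad \vartheta_\tau=\tfrac{(w_x+A)(w_x-\vartheta)}{n+A}+\Big(\tfrac{(\vartheta+A)^\beta\vartheta_x}{n+A}\Big)_x,
\]
with $w_x-\vartheta=\vartheta_x=0$ at $x=0,1$ and the constraints $\int_0^1 w\,dx=0$ and $\int_0^1\vartheta\,dx=-\tfrac12\int_0^1 w^2\,dx-A\int_0^1 w(x-\tfrac12)\,dx$ (the latter is precisely what \eqref{A0} encodes). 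By the uniform bounds $n+A\asymp 1$ and $\vartheta+A\asymp 1$, so all coefficients are uniformly elliptic. Testing the $w$-equation against $w$ and the $\vartheta$-equation against $1-A/\theta$ and adding, the cross terms cancel and $\mathcal E_0(\tau):=\int_0^1\big(\tfrac12 w^2+\theta-A-A\ln(\theta/A)\big)\,dx$ satisfies
\[
\frac{d}{d\tau}\mathcal E_0=-A\int_0^1\frac{(w_x-\vartheta)^2}{(n+A)(\vartheta+A)}\,dx-A\int_0^1\frac{(\vartheta+A)^{\beta-2}\vartheta_x^2}{n+A}\,dx\le 0 .
\]
A compactness argument — weak $H^1$ limits along a subsequence, using the integrated dissipation, the two constraints, and $\theta\ge C^{-1}$ to rule out the spurious constant state — shows $\mathcal E_0(\tau)\to 0$; once it is small the constraint makes $\int_0^1\vartheta\,dx$ quadratically small, Poincar\'e's inequality (via $\int_0^1 w\,dx=0$ and $\vartheta_x|_{0,1}=0$) upgrades the dissipation to $\mathcal E_0\le C\big(-\tfrac{d}{d\tau}\mathcal E_0\big)$, and Gr\"onwall gives $\mathcal E_0(\tau)\le Ce^{-\lambda\tau}$, i.e. $\|w,\vartheta\|_{L^2}^2\le C(1+t)^{-\lambda}$. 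Running the same scheme for the first (and, for the $H^1$ statement, second) $x$-derivatives — and reading $\|n\|_{H^1}$ off the linear ODE $n_\tau+n=w_x$ and its $x$-derivative by Gr\"onwall once $\|w_x\|,\|w_{xx}\|$ are known to decay — upgrades this to $\|n,w,\vartheta\|_{H^1}\le C(1+t)^{-\lambda}$, with $\lambda$ governed by the spectral gap of the linearization and therefore depending only on $\beta$ and the initial data.
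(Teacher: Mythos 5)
Your overall skeleton — the Kazhikhov representation for $u$, the identification of $A$ from the conservation laws, the passage to $\tau=\ln(1+t)$, and an entropy/Lyapunov argument for the decay — is the same as the paper's. But two steps that you treat as routine are in fact the substance of the proof, and as written they do not close. First, the two-sided bound on $\theta$. You run the entropy estimate in the \emph{original} time variable, where $\int_0^1\ln u\,dx$ grows like $\log(1+T)$, so your dissipation bound is only $O(\log(2+T))$; a source term $v_x^2/u$ controlled merely in $L^1_{t,x}$ with logarithmic growth is not enough input for a De Giorgi/Moser iteration to give a time-uniform $L^\infty$ bound on $\theta$, and your bootstrap is circular (you never explain how the provisional constants improve). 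The paper avoids this by doing the entropy estimate \emph{after} rescaling, where the bounded specific volume makes the dissipation $\int_0^\infty V(t)\,dt$ finite (Lemma \ref{2.1}, \eqref{88}); it then gets the upper bound on $u/(1+t)$ from the representation formula combined with the pointwise estimate \eqref{9} ($\theta\lesssim 1+V(t)\max u$) and Gr\"onwall, the lower bound on $\theta$ from the cut-off test function $(\theta^{-1/2}-\alpha_1^{-1/2})_+^p\theta^{-3/2}$ with $p\to\infty$, and the upper bound on $\theta$ only after the chain of estimates \eqref{7487}, \eqref{009}, \eqref{14241} controlling $\theta^{\beta}\theta_x^2$ by $\bigl(\int(w_x+u-\theta)^2dx\bigr)^2$ — none of which is replaced by your sketch. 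Your remark that ``$v_x^2/u\ge0$ prevents collapse'' ignores the sign-indefinite work term $-\theta v_x/u$, which is exactly what the completed square in \eqref{003} and the cut-off are there to handle.

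Second, the exponential rate. Your constraint is $\int_0^1\vartheta\,dx=-\tfrac12\int w^2dx-A\int_0^1 w(x-\tfrac12)dx$; the cross term is \emph{linear} in $\|w\|_{L^2}$ with coefficient of order $A$, not quadratic, so the Poincar\'e chain $\|w\|^2\lesssim\|w_x-\vartheta\|^2+\|\vartheta_x\|^2+(\int\vartheta)^2$ returns $\|w\|^2$ on the right with a constant that there is no reason to believe is less than one. This is precisely the obstruction the paper flags after Lemma 4.1 (``If an inequality $C_5<C_4$ holds \dots\ one cannot expect it''), and it is resolved there by the auxiliary function $\psi$ of \eqref{psid1}, the extra conserved-quantity identity \eqref{654}, and the algebraic estimate \eqref{121}, which uses the specific value \eqref{A0} of $A$ to convert $(\theta-A)^2$ into $(\theta-u)^2$ plus terms already controlled by the dissipation. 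Relatedly, your choice $w=v-m_0-A(x-\tfrac12)$ discards the damping term $+w$ that the paper's shift \eqref{004} produces in \eqref{002}, which is what actually drives $\|w\|_{L^2}\to0$ exponentially; with your $w$ the equation $w_\tau=((w_x-\vartheta)/(n+A))_x$ has no zeroth-order dissipation at all. Both gaps are fixable, but only by essentially reconstructing the paper's Sections 2 and 4.
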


In order to prove Theorem \ref{thm:light}, it is convenient to transform the problem into somewhat similar to the outer pressure problem which was discussed in \cite{15}.
First we introduce a variable $ \hat{t}=\log(1+t)$ and a function $\tilde{u}(x,t)=u/(1+t) $. For a function $f(x,t)$, define $\hat{f}$ by
\begin{gather}
\hat{f}=\hat{f}(x,\hat{t})\triangleq f(x,t(\hat{t}))=f(x,e^{\hat{t}}-1). \notag
\end{gather}
 It follows from \eqref{as1}-\eqref{as3} that $(\hat{\tilde{u}},\hat{v},\hat{\theta},\hat{t})$ satisfies
\begin{gather}
\hat{\tilde{u}}_{\hat{t}}+\hat{\tilde{u}}=\hat{v}_{x},\label{1412}\\
\hat{v}_{\hat{t}}=\left(-\frac{\hat{\theta}}{\hat{\tilde{u}}}+ \frac{\hat{v_{x}}}{\hat{\tilde{u}}} \right)_{x},\label{1413}\\
\hat{\theta}_{\hat{t}}=-\frac{\hat{\theta} \hat{v}_{x}}{\hat{\tilde{u}}}+ \frac{\hat{v}^2_{x}}{\hat{\tilde{u}}}+
\left(\frac{\hat{\theta}^\beta \hat{\theta}_{x}}{\hat{\tilde{u}}} \right)_{x}.\label{1414}
\end{gather}
To avoid complicated notation, in what follows, we still write $(\hat{\tilde{u}},\hat{v},\hat{\theta},\hat{t})$
as $(u,v,\theta,t)$, i.e, in the later discussion, the system \eqref{1412}-\eqref{1414} will be written as
\begin{gather}
u_t+u=v_x,\label{412}\\
v_t=\left(-\frac{\theta}{u}+ \frac{v_x}{u} \right)_x,\label{413}\\
\theta_t=-\frac{\theta v_x}{u}+\frac{v^2_x}{u}+
\left(\frac{\theta^\beta \theta_x}{u} \right)_{x}.\label{414}
\end{gather}
However, for the sake of the distinction of these symbols, we will point out the equations they satisfy.

It is easy to see that the system \eqref{412}-\eqref{414} with the same initial and boundary conditions \eqref{1523} and \eqref{2153} has the energy identity
\begin{gather}
\int_{0}^{1}\left(\frac{1}{2}v^{2}(x,t)+\theta(x,t) \right)dx=E_{0},\label{66}
\end{gather}
where $E_0$ is given by \eqref{E0}.

For further transformation, we set
\begin{gather}
w=v(x,t)-\int_{0}^{1}v_{0}(\xi)d\xi -\int_{0}^{x}u(\xi,t)d\xi +\int_{0}^{1} \int_{0}^{\xi}u(\eta,t)d\eta d\xi,\label{004}
\end{gather}
where $u$ and $v$ are shown in the system \eqref{412}-\eqref{414}.
One can check that
\begin{gather}
w_x=v_x-u, \label{wvxu}
\end{gather}
 and
\begin{gather}
v_t=w_t+w \label{wvtu},
\end{gather}
where in the last equality we have utilize \eqref{412} and the fact
\begin{gather}\label{w0}
\int_{0}^{1}w(x,t)dx =0,
\end{gather}
due to \eqref{413} and \eqref{2153}.
Therefore, if we use $w(x,t)$ instead of $v(x,t)$,  the system \eqref{412}-\eqref{414} becomes
\begin{gather}
u_{t}=w_{x},\label{001} \\
w_{t}+w=\left(-\frac{\theta}{u}+ \frac{w_{x}}{u} \right)_{x},\label{002} \\
\theta_{t}= \frac{( w_{x}+ u- \theta)^2}{u}+\frac{( w_{x}+ u- \theta)\theta}{u}+\left(\frac{\theta^{\beta} \theta_{x}}{u} \right)_{x}.\label{003}
\end{gather}
And by initial and boundary conditions \eqref{1523}-\eqref{2153}, we have
\begin{gather}
(u,w,\theta)(x,0)=(u_{0},w_{0},\theta_{0}),\quad u_{0}>0, \quad \theta_{0}>0,\quad \int_{0}^{1}w_{0}dx=0,\\
\left(-\frac{\theta}{u}+\frac{w_{x}}{u} \right)(0,t)=\left(-\frac{\theta}{u}+ \frac{w_{x}}{u} \right)(1,t)=-1,\label{1212}\\
\theta_{x}(0,t)=\theta_{x}(1,t)=0.\label{11212}
\end{gather}
Therefore, we reduce the problem \eqref{as1}-\eqref{2153} to the outer pressure problem \eqref{001}-\eqref{11212}, and this transformation is crucial in our following analysis.

We now explain the main strategy of this paper. To establish Theorem 1.1, the key point is to control the lower and upper bounds of both
$u$ and $\theta$. To this end, we mainly employ some ideas in \cite{15,LIJING,2202}. However, due to the appearance of the degeneracy and nonlinearity of the heat conductivity duo to $\beta>0$ and the boundary conditions of \eqref{1212}, some deeper discussions are much more involved here. Our discussion bases on three  treatments. First, motivated by \cite{9}, we obtain an explicit expression of $u$ which is used to estimate the lower and upper bounds of $u$. Next, multiplying the equation of the temperature by $(\theta^{-\frac{1}{2}}-\alpha_{1}^{-\frac{1}{2}})^{p}_{+}\theta^{-\frac{3}{2}}$ and using the Gronwall's inequality, we succeed in obtaining that the temperature is indeed bounded from below which plays an important role in our further analysis. Moreover, we prove the following significant estimate :
\begin{gather*}
\int_{0}^{T}\int_{0}^{1}\theta^{-1}\theta_{x}^{2}dxdt\leq C,
\end{gather*}
which not only implies that
\begin{gather*}
\int_{0}^{T}\int_{0}^{1}\theta^{-1}\theta_{x}^{2}dxdt+\int_{0}^{T}\int_{0}^{1}(w_{x}+u-\theta)^{2}dxdt\leq C,
\end{gather*}
but also gives that  $L^2((0,T)\times (0,1))$-norm of $\theta_x$ can be  bounded  by the
 $L^4(0,T;L^2(0,1))$-norm of $w_{x}+u-\theta$ which is crucial for achieving   the uniform bound on $L^2((0,T)\times (0,1))$-norm
 of $(w_{x}+u-\theta)_{x}$ (see Lemma 2.6). Finally, by modifying some ideas in \cite{LIJING}, \cite{15} and \cite{2202}, we  obtain the  exponential decay of strong solutions. We should  point   that in the proof of Theorem 1.1, cut-off techniques play an important role  in many places which are greatly different from \cite{2202} and \cite{15}.  The whole procedure will be carried out in the next section.
\newpage

\section{Proof of Theorem 1.1}
In this section, we will give a priori estimates for  \eqref{001}-\eqref{11212} in $[0,1]\times [0,T]$ which is derived by the system \eqref{412}-\eqref{414} with initial and boundary conditions \eqref{1523} and \eqref{2153} through \eqref{004}. Unless otherwise stated, we follow the convention that $C$ is an unspecified positive constant that may vary from expression to expression, even across an inequality (but
not across an equality). Also $C$ depends on initial data and $\beta$ generally but does not depend on $T$, and
the dependence of $C$ on other parameters will be specified within parenthesis when necessary.

We begin with the following energetic estimates.
\begin{lemma}\label{2.1}
For the system \eqref{412}-\eqref{414} in $[0,1]\times [0,T]$ with \eqref{1523} and \eqref{2153}, we have
\begin{gather}
\int_{0}^{1}\left(\frac{v^{2}}{2}+\bigl(\theta-\log\theta-1\bigr) \right)dx+\int_{0}^{T}V(s)ds \leq e_{0},\label{88}
\end{gather}
where $e_{0}$ is a positive constant depending only on initial data, and
\begin{gather}
V(t)\triangleq\int_{0}^{1}\left(\frac{\theta^{\beta} \theta_{x}^2}{u\theta^2}+\frac{(w_{x}+u-\theta)^2}{u\theta} \right)dx.\label{Vt}
\end{gather}
\end{lemma}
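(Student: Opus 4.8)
The plan is to derive the standard entropy/energy estimate for the rescaled system \eqref{412}--\eqref{414}. First I would recall the basic energy identity \eqref{66}, $\int_0^1(\frac12 v^2+\theta)dx=E_0$, which is obtained by multiplying \eqref{413} by $v$ and \eqref{414} by $1$ (using $c_\nu=1$), adding, and integrating over $(0,1)$; the boundary terms vanish by \eqref{2153}, noting that in the transformed variables the stress $-\theta/u+v_x/u$ still vanishes at $x=0,1$. Actually the subtlety is that under the time rescaling $\hat t=\log(1+t)$, the momentum equation acquires no extra term but the continuity equation becomes $u_t+u=v_x$, so when I multiply \eqref{413} by $v$ and integrate I must be careful: the kinetic plus internal energy is no longer exactly conserved. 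So the honest route is to work with $(v,\theta)$ and produce a Lyapunov functional rather than a conserved one — this is exactly why the statement has ``$\leq e_0$'' rather than ``$=$''.

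The main computation: multiply the temperature equation \eqref{414} by $(1-\theta^{-1})$ (equivalently $\theta^{-1}$ times the equation, then combine), which is the classical trick to produce the entropy. Using \eqref{414} in the form $\theta_t = -\frac{\theta v_x}{u}+\frac{v_x^2}{u}+\left(\frac{\theta^\beta\theta_x}{u}\right)_x$, multiplying by $(1-\theta^{-1})$ and integrating gives, after integration by parts on the conduction term,
\begin{gather*}
\frac{d}{dt}\int_0^1(\theta-\log\theta)\,dx = \int_0^1\left[(1-\theta^{-1})\left(-\frac{\theta v_x}{u}+\frac{v_x^2}{u}\right)\right]dx - \int_0^1 \theta^{-2}\frac{\theta^\beta\theta_x^2}{u}\,dx,
\end{gather*}
where the boundary contribution from the conduction term vanishes because $\theta_x=0$ at $x=0,1$. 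Simultaneously, multiplying \eqref{413} by $v$ and integrating, together with \eqref{412}, yields the evolution of $\int_0^1\frac12 v^2\,dx$; adding the two relations, the indefinite-sign terms coming from $-\theta v_x/u$ and $v_x^2/u$ should combine (after completing the square and using $v_x = w_x+u$ via \eqref{wvxu}, or directly) into $-\int_0^1\frac{(w_x+u-\theta)^2}{u\theta}\,dx$ up to a term that is itself controlled by the energy \eqref{66}. Concretely I expect the combination to take the shape $\frac{d}{dt}\int_0^1\left(\frac{v^2}{2}+\theta-\log\theta\right)dx + V(t) \le (\text{bounded forcing})$, where $V$ is precisely the dissipation functional \eqref{Vt}; then integrating in time over $[0,T]$ and absorbing the forcing (which is integrable because $\int_0^1\theta\,dx$ and $\int_0^1 v^2\,dx$ are bounded by $E_0$) gives \eqref{88} with $e_0$ depending only on the initial data and $\beta$. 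The constant $-1$ in the integrand $\theta-\log\theta-1$ is just a harmless normalization making the integrand nonnegative (since $\theta-\log\theta-1\ge 0$).

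The step I expect to be the main obstacle is bookkeeping the ``extra'' terms created by the time change of variables — the $+u$ in \eqref{412} and the $+w$ in \eqref{002} mean the naive entropy functional is not monotone, so one must identify which lower-order terms (e.g. $\int_0^1\theta\,dx$, $\int_0^1 v^2\,dx$, or $\int_0^1 u^{-1}\cdots$) appear on the right-hand side and check they are a priori bounded, uniformly in $T$, using only \eqref{66} and the positivity of $u$. A secondary technical point is justifying the integrations by parts and the use of $(1-\theta^{-1})$ as a multiplier given only the strong-solution regularity \eqref{klmk}; this is routine but must be mentioned. Once the differential inequality $\frac{d}{dt}(\text{functional}) + V(t)\le C$ with $C$ independent of $T$ is in hand — and crucially a Poincaré-type observation forcing the right-hand side to actually be integrable rather than merely bounded — integrating from $0$ to $T$ and dropping the nonnegative functional at time $T$ on the left (keeping it, rather, since we want the pointwise-in-time bound too) yields the claimed estimate. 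I would present the proof in this order: (i) state the multipliers, (ii) write the two differential identities, (iii) add and complete the square to reveal $V(t)$, (iv) bound the residual forcing by \eqref{66}, (v) integrate in time.
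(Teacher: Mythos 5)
Your setup is essentially the paper's: multiply the temperature equation by $\theta^{-1}$ (your $1-\theta^{-1}$ is the same thing combined with the conserved energy), integrate by parts using $\theta_x=0$ at the endpoints, and complete the square in $v_x(v_x-\theta)/(u\theta)$ to produce exactly the dissipation $V(t)$. After that step one is left with the identity
\begin{equation*}
-\frac{d}{dt}\int_{0}^{1}\log\theta\,dx+V(t)=-\int_{0}^{1}\frac{v_{x}-\theta}{u}\,dx,
\end{equation*}
and everything hinges on controlling the time integral of the right-hand side uniformly in $T$. This is where your proposal has a genuine gap. You say the residual forcing is ``controlled by the energy \eqref{66}'' and ``integrable because $\int_0^1\theta\,dx$ and $\int_0^1 v^2\,dx$ are bounded by $E_0$,'' but neither claim works: at this stage there is no lower bound on $u$ (that is Lemma 2.2, which \emph{uses} the present lemma), so $\int_0^1\theta/u\,dx$ is not controlled by $\int_0^1\theta\,dx$, and $\int_0^1 v_x/u\,dx$ is not controlled at all; moreover even a pointwise-in-time bound $C$ on the forcing would only give $CT$ after integration, destroying the $T$-independence of $e_0$. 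You sense the difficulty (``a Poincar\'e-type observation forcing the right-hand side to actually be integrable''), but the correct mechanism is not Poincar\'e. It is the stress-free boundary condition: integrating the momentum equation \eqref{413} from $0$ to $x$ gives $(-\theta+v_x)/u=\partial_t\int_0^x v\,d\xi$, hence
\begin{equation*}
\int_{0}^{1}\frac{v_{x}-\theta}{u}\,dx=\frac{d}{dt}\int_{0}^{1}\int_{0}^{x}v(\xi,t)\,d\xi\,dx,
\end{equation*}
an exact time derivative of a quantity bounded by $\bigl(\int_0^1 v^2\,d\xi\bigr)^{1/2}\le\sqrt{2E_0}$ via Cauchy--Schwarz and \eqref{66}. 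Its time integral therefore telescopes to boundary-in-time values bounded independently of $T$, which is what closes the estimate. Without supplying this identity the proof does not close.

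A secondary inaccuracy: you assert that under the rescaling $\hat t=\log(1+t)$ the kinetic-plus-internal energy ``is no longer exactly conserved.'' In fact \eqref{66} is an exact equality: the rescaling modifies only the continuity equation, while the energy identity uses only the momentum and temperature equations together with the boundary conditions \eqref{2153}. The inequality ``$\le e_0$'' in \eqref{88} comes from the $-\log\theta$ term and the bounded (but non-monotone) flux functional $\int_0^1\int_0^x v\,d\xi\,dx$, not from any failure of energy conservation. This does not derail your argument, but it misattributes where the slack in the lemma comes from.
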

\begin{proof}
First, by \eqref{413} and \eqref{2153}, we have
\begin{gather}
\int_{0}^{1} \frac{v_x-\theta}{u} dx=\frac{d}{dt}\int_{0}^{1}\int_{0}^{x}v(\xi,t)d\xi dx.\label{v23}
\end{gather}
Next, by Cauchy's inequality, we get
\begin{gather}
\left|\int_{0}^{x}vd\xi \right|\leq \int_{0}^{1}|v|d\xi\leq \left(\int_{0}^{1}v^{2}d\xi \label{v24} \right)^{\frac{1}{2}}.
\end{gather}
Finally, multiplying \eqref{414} by $1/\theta $, integrating with respect to $x$ over $[0,1]$, by \eqref{wvxu} and \eqref{v23}, we have
\begin{gather}
-\frac{d}{dt}\int_{0}^{1} \log \theta dx+ V(t)=-\frac{d}{dt}\int_{0}^{1}\int_{0}^{x}v(\xi,t)d\xi dx,\label{23}
\end{gather}
which along with \eqref{66} and \eqref{v24} yields \eqref{88}.
\end{proof}
The following Lemma gives the  lower  and upper bounds of $u$, which is a key for our further analysis.
\begin{lemma}
There exists a positive constant $M$ depending only on initial data and $\beta$ such that for any $(x,t)\in [0,1]\times [0,\infty)$,
\begin{gather}
M^{-1}\leq u(x,t)\leq M.\label{008}
\end{gather}
\end{lemma}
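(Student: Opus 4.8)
The plan is to follow the Kazhikhov–Shelukhin strategy adapted to the transformed system \eqref{001}--\eqref{003}, i.e.\ to produce an explicit representation formula for $u(x,t)$ and then squeeze it between two positive constants using the energetic bounds of Lemma \ref{2.1}. First I would rewrite \eqref{002} in the convenient form $\left(\log u\right)_{xt} = \left(-\frac{\theta}{u}+\frac{w_x}{u}\right)_x - \text{(lower order)}$; more precisely, using $u_t=w_x$ from \eqref{001}, one has $\frac{w_x}{u}=\left(\log u\right)_t$, so \eqref{002} becomes $w_t+w = \left(\log u\right)_{tx} - \left(\frac{\theta}{u}\right)_x$. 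Integrating this in $x$ from $0$ to $x$ and using the boundary condition \eqref{1212} (which says the combination $-\frac{\theta}{u}+\frac{w_x}{u}$ equals $-1$ at both ends), one can solve for $\left(\log u\right)_t$ and obtain an identity of the shape
\[
\left(\log u\right)_t(x,t) = \frac{\theta}{u}(x,t) - 1 + \frac{d}{dt}\!\int_0^x w(\xi,t)\,d\xi + \int_0^x w(\xi,t)\,d\xi .
\]
Integrating once more in $t$ and exponentiating then yields an explicit formula
\[
u(x,t) = D(x,t)\Bigl[u_0(x) + \int_0^t \frac{\theta(x,s)}{D(x,s)}\,ds\Bigr],
\]
where $D(x,t)=\exp\!\bigl(\int_0^x(w(x,t)-w_0(x))d\xi + \int_0^t\!\int_0^x w\,d\xi\,ds\bigr)$ is a strictly positive factor. (The exact bookkeeping of the $w$-terms is routine; the point is that $D$ and $D^{-1}$ will be shown to be bounded above and below.)

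Next I would bound the auxiliary factor $D$ and its reciprocal. By \eqref{w0} and the Poincaré/Cauchy–Schwarz inequality, $\bigl|\int_0^x w(\xi,t)d\xi\bigr|\le \|w(\cdot,t)\|_{L^2}$, and since $w_x=v_x-u$ by \eqref{wvxu}, control of $\|w\|_{L^2}$ follows from the energy bound $\int_0^1 v^2\,dx\le 2E_0$ in \eqref{66} together with the (yet to be used) $L^2$-in-time control of $v_x$ and $u$; alternatively one uses directly that $\int_0^1(\theta-\log\theta-1)dx$ and $\int_0^1 v^2 dx$ are bounded by $e_0$ from Lemma \ref{2.1}, which in particular pins $\int_0^1\theta\,dx$ between two positive constants. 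The time integral $\int_0^t\!\int_0^x w\,d\xi\,ds$ is handled the same way after integrating the $L^2$ bound in time. This gives $C^{-1}\le D(x,t)\le C$ uniformly. Then the representation immediately gives the \emph{upper} bound: $u(x,t)\le C\bigl(u_0(x)+\int_0^t\theta(x,s)\,ds\bigr)$, but this still contains $\int_0^t\theta\,ds$ pointwise in $x$, so I would instead integrate the representation in $x$ and use Jensen/convexity together with $\int_0^1\theta\,dx\le C$ to control $\int_0^1 u\,dx$, and then combine with a pointwise lower bound to upgrade to an $L^\infty$ bound; the lower bound $u(x,t)\ge C^{-1}u_0(x)\ge C^{-1}\inf u_0>0$ is immediate from the representation since every term in the bracket is nonnegative and $D\ge C^{-1}$.

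The main obstacle is the \emph{upper} bound on $\theta$-weighted time integral, i.e.\ controlling $\int_0^t\theta(x,s)\,ds$ pointwise, because the energetic estimates of Lemma \ref{2.1} only give an $L^1_x$ bound on $\theta$ at each time, not an $L^\infty_x$ bound, and the representation formula has $\theta$ appearing without a $x$-integration. The standard device (as in \cite{9,15}) is to integrate the representation over $x\in(0,1)$, use the mean-value theorem to locate a point $x_0(t)$ where $u(x_0(t),t)=\int_0^1 u\,dx$, derive an ODE inequality for $\int_0^1 u\,dx$ from \eqref{001} and \eqref{66}, and then propagate the pointwise bound from $x_0(t)$ to all $x$ via the identity $u(x,t)=u(x_0(t),t)\exp\bigl(\int_{x_0(t)}^x(\log u)_x\,d\xi\bigr)$, where $(\log u)_x = w_x/u \cdot$(stuff); controlling $\int_0^1 |(\log u)_x|dx$ needs $\int_0^1 \frac{(w_x+u-\theta)^2}{u\theta}dx$-type terms from $V(t)$ in Lemma \ref{2.1}, together with the just-established bounds on $\int_0^1\theta\,dx$ and on $u$ from below. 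I expect that carefully chaining these — the $L^1$-in-time bound on $V$, the lower bound on $\theta$ (which is actually established in a later lemma but may be circumvented here by using only $\int(\theta-\log\theta-1)$), and the representation — yields $u\le M$, closing the estimate. The degeneracy $\beta>0$ does not interfere at this stage since $\theta^\beta\theta_x^2/(u\theta^2)$ is exactly the good term appearing in $V(t)$.
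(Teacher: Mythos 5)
Your starting point --- an explicit Kazhikhov--Shelukhin representation formula for $u$ squeezed by the energetic bounds of Lemma \ref{2.1} --- is exactly the paper's first step (it integrates \eqref{413} to get $u(x,t)=e^{-t}u_{0}(x)B(x,t)\bigl(1+u_0^{-1}\int_{0}^{t}\theta e^{s}B^{-1}(x,s)\,ds\bigr)$ with $B=e^{\int_0^x(v-v_0)d\xi}$ bounded above and below by \eqref{88}). But your bookkeeping of the factor $D$ drops the $e^{-t}$ that necessarily arises from the ``$+u$'' in \eqref{412} (equivalently the ``$+w$'' in \eqref{002} and the $-1$ in \eqref{1212}), and this is not a cosmetic issue. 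Without it, your exponent contains $\int_0^t\!\int_0^x w\,d\xi\,ds$, which the uniform bound $\|w\|_{L^2}\le C$ only controls by $Ct$, so the claim $C^{-1}\le D\le C$ fails. With it, the initial-data contribution is $e^{-t}u_0B$, which decays to zero, so your assertion that the lower bound is ``immediate since every term in the bracket is nonnegative'' collapses: the lower bound must come from the $e^{-t}\int_0^t e^s\theta\,ds$ term. The paper extracts it from the pointwise estimate $\theta\ge \alpha_1/4 - CV(t)\max_x u$ together with $\int_0^te^{s-t}V(s)\,ds\to0$ (using $\int_0^\infty V\le e_0$), which gives $u\ge C\alpha_1/8$ for $t\ge T^*$, the short-time range being covered by $e^{-t}u_0B$.

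For the upper bound, the device you propose --- locate $x_0(t)$ with $u(x_0(t),t)=\int_0^1u\,dx$ and propagate via $u(x,t)=u(x_0,t)\exp\bigl(\int_{x_0}^x(\log u)_\xi d\xi\bigr)$ --- has a genuine gap: it needs $\int_0^1|u_x|/u\,dx\le C$, which is not available at this stage (the bound $\int_0^1u_x^2dx\le C$ is only established in Lemma 2.4, \emph{after} \eqref{008} and the lower bound on $\theta$), and $V(t)$ in \eqref{Vt} controls $(w_x+u-\theta)^2/(u\theta)$ and $\theta^\beta\theta_x^2/(u\theta^2)$, neither of which touches $u_x$. The paper's actual mechanism sidesteps $u_x$ entirely: from $\bigl|\theta^{(\beta+1)/2}-\bar\theta^{(\beta+1)/2}\bigr|\le C\bigl(\int\theta^\beta\theta_x^2/(u\theta^2)\bigr)^{1/2}\bigl(\int u\theta\bigr)^{1/2}\le CV^{1/2}(t)\max_xu^{1/2}$ one gets the pointwise bound $\max_x\theta\le C+CV(t)\max_xu$ (this is \eqref{9}); inserting this into the representation yields $\max_xu(\cdot,t)\le C+C\int_0^tV(s)\max_xu(\cdot,s)\,ds$, and Gronwall with $\int_0^\infty V\le e_0$ closes the estimate. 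You gesture at the right ingredients (the $V$-term involving $\theta^\beta\theta_x^2/(u\theta^2)$), but the step that makes the argument work --- converting the oscillation bound on $\theta^{(\beta+1)/2}$ into a linear-in-$\max_xu$ bound on $\max_x\theta$ so that Gronwall applies --- is missing, and the substitute you offer does not go through.
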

\begin{proof}
Integrating \eqref{413} over $[0,x]$, by \eqref{412} and \eqref{2153}, we get
\begin{gather}
u_t+\left(-\int_{0}^{x}(v-v_{0})d\xi+t\right)_tu=\theta.
\end{gather}
Consequently,
\begin{gather}
u(x,t)=\frac{u_{0}(x)B(x,t)}{e^{t}}\left(1+ \frac{1}{u_{0}}\int_{0}^{t}\frac{\theta e^{s}}{B(x,s)}ds \right).\label{8}
\end{gather}
where
\begin{gather}
B(x,t)\triangleq e^{\int_{0}^{x} (v(\xi,t)-v_{0}(\xi )) d\xi}.\label{Bxt}
\end{gather}

By \eqref{88} and \eqref{v24},
\begin{gather}
C^{-1}\leq B(x,t)\leq C.\label{77}
\end{gather}
Now we set
\begin{gather}
\bar{\theta}\triangleq \int_{0}^{1}\theta(x,t)dx,
\end{gather}
by Jensen's inequality and \eqref{88}, we see that
\begin{gather}
0<\alpha_{1}\leq \bar{\theta}\leq \alpha_{2}<\infty,\label{123}
\end{gather}
where $\alpha_{1}$ and $\alpha_{2}$ are two roots of
\begin{gather}
x-\log x-1=e_{0}.\notag
\end{gather}
By Cauchy's inequality and the definition of $V(t)$ (see \eqref{Vt}),
\begin{equation}
\begin{split}
\left|\theta^{\frac{\beta+1}{2}}-\bar{\theta}^{\frac{\beta+1}{2}} \right| &\leq \frac{\beta+1}{2}\int_{0}^{1}\theta^\frac{\beta-1}{2}|\theta_x|dx\\
&\leq
 \frac{\beta+1}{2}\left(\int_{0}^{1}\frac{\theta^{\beta}\theta_{x}^{2}}{u\theta^{2}}dx \right)^{\frac{1}{2}}\left(\int_{0}^{1}u\theta dx\right)^{\frac{1}{2}}\\
 &\leq CV^{\frac{1}{2}}(t)\max_{x\in [0,1]}u^{\frac{1}{2}}(x,t),
\end{split}
\end{equation}
which together with \eqref{123}, implies that,  for all $(x,t)\in [0,1]\times [0,\infty)$,
\begin{gather}
\frac{\alpha_{1}}{4}-CV(t)\max_{x\in [0,1]}u(x,t)\leq \theta(x,t)\leq C+CV(t)\max_{x\in [0,1]}u(x,t).\label{9}
\end{gather}
Therefore, it follows from \eqref{8}, \eqref{77} and \eqref{9} that
\begin{equation}
\begin{split}
u(x,t)&\leq C+Ce^{-t}\int_{0}^{t}e^{s}\max_{x\in [0,1]}\theta(x,s)ds\\
&\leq C+C\int_{0}^{t}V(s)\max_{x\in [0,1]}u(x,s)ds.
\end{split}
\end{equation}
By \eqref{88} and Gronwall's inequality, for all $(x,t)\in [0,1]\times [0,\infty)$,
\begin{gather}
u(x,t)\leq C.\label{63}
\end{gather}

On the other hand, we deduce from \eqref{8}, \eqref{9} and \eqref{63} that for any $x\in [0,1]$,
\begin{equation}
\begin{split}
u(x,t)&\geq Ce^{-t}\int_{0}^{t}e^{s}\theta(x,s)ds\\
&\geq \frac{C\alpha_{1}}{4}(1-e^{-t})-C\int_{0}^{t}e^{s-t}V(s)ds.
\end{split}
\end{equation}

Noticing that, by \eqref{88},
\begin{equation}
\int_{0}^{t}e^{s-t}V(s)ds=\int_{0}^{\frac{t}{2}}e^{s-t}V(s)ds+\int_{\frac{t}{2}}^{t}e^{s-t}V(s)ds\rightarrow 0,
\end{equation}
as $t\rightarrow\infty.$ As a result, there exists a $T^{\ast}>0$ such that for $t\in [T^{\ast},\infty)$,
\begin{equation}
\begin{split}
u(x,t)\geq \frac{C\alpha_{1}}{8}.\label{888}
\end{split}
\end{equation}
For $t\in [0,T^{\ast}]$, it follows from \eqref{8} and \eqref{77} that  $u(x,t)\geq Ce^{-T^{\ast}}$, which together with \eqref{63} and \eqref{888} gives \eqref{008}.
\end{proof}

\begin{lemma}\label{thbelow}
There exists a positive constant $\widehat{N}$ depending only on initial data and $\beta$  such that
\begin{gather}
\theta(x,t) \geq \widehat{N}^{-1},\label{563}
\end{gather}
for any $(x,t)\in [0,1]\times [0,\infty)$.
\end{lemma}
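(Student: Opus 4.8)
My plan is to bound, uniformly in $t\ge0$, an entire scale of negative moments of $\theta$ and then let the order go to infinity. For $p\ge1$ set $\psi_p(\theta)=(\theta^{-1/2}-\alpha_1^{-1/2})_+^{p+1}$, with $\alpha_1$ as in \eqref{123}, and $\Psi_p(t)=\int_0^1\psi_p(\theta(x,t))\,dx$; then $\psi_p$ is nonnegative, decreasing and convex on $(0,\alpha_1)$, vanishes for $\theta\ge\alpha_1$, and $-\psi_p'(\theta)$ is $\tfrac{p+1}{2}$ times the multiplier $(\theta^{-1/2}-\alpha_1^{-1/2})_+^{p}\theta^{-3/2}$ announced in the introduction. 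Multiplying \eqref{003} by $\psi_p'(\theta)$ and integrating over $(0,1)$, the conduction term becomes $-\int_0^1\psi_p''(\theta)\,\theta^\beta\theta_x^2/u\,dx\le0$ after an integration by parts whose boundary terms vanish by \eqref{11212} --- and this holds for every $\beta\ge0$, so the degeneracy is harmless at this step. In the source, $\int_0^1\psi_p'(\theta)\bigl[(w_x+u-\theta)^2+(w_x+u-\theta)\theta\bigr]u^{-1}\,dx$, the quadratic term has the favourable sign, while the linear term is split by Young's inequality, part of it absorbed into the quadratic one. Using the bounds $M^{-1}\le u\le M$ from \eqref{008} and the elementary inequality $a^{p}\le a^{p+1}+1$ (to re-express $\int_0^1(\theta^{-1/2}-\alpha_1^{-1/2})_+^{p}\theta^{1/2}\,dx$), I expect to arrive at a differential inequality
\[
\frac{d}{dt}\Psi_p(t)\le C\bigl(\Psi_p(t)+1\bigr),
\]
with $C$ depending only on the initial data (through $M$ and $\alpha_1$) but, crucially, \emph{not} on $p$. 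Since $\psi_p'$ is unbounded as $\theta\to0$, this computation would be justified by first replacing the weight by a bounded Lipschitz truncation --- which makes $\Psi_p$ absolutely continuous in $t$ and legitimizes the integration by parts --- and then removing the truncation by monotone convergence; this is where the cut-off technique enters.

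Because $C$ is time-independent, a single Gronwall application on $[0,T]$ would only give exponential growth of $\Psi_p$; to close the argument I would use the dissipation from Lemma~\ref{2.1}. By \eqref{9} combined with \eqref{008}, $\theta(x,t)\ge \alpha_1/4-CMV(t)$ for all $x$, so there is $\varepsilon_0>0$ (depending only on the data and $\beta$) such that $V(t)\le\varepsilon_0$ forces $\theta(x,t)\ge c_0:=\alpha_1/8>0$. Since $\int_0^\infty V(t)\,dt\le e_0$ by \eqref{88}, the bad set $\mathcal B=\{t\ge0:V(t)>\varepsilon_0\}$ has measure at most $e_0/\varepsilon_0$. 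Given $t>0$, put $a=\sup\bigl(\{0\}\cup\{s\in[0,t]:V(s)\le\varepsilon_0\}\bigr)$; then $(a,t]\subseteq\mathcal B$, hence $t-a\le e_0/\varepsilon_0$, whereas $\theta(\cdot,a)\ge c_2:=\min\{c_0,\inf_{x}\theta_0\}>0$ (by continuity of $\theta$ in $t$, or because $a=0$). Integrating the differential inequality on $[a,t]$ then gives $\Psi_p(t)+1\le e^{C(t-a)}\bigl(\Psi_p(a)+1\bigr)\le e^{Ce_0/\varepsilon_0}\bigl((c_2^{-1/2}-\alpha_1^{-1/2})_+^{p+1}+1\bigr)$, uniformly in $t\ge0$ and in $p$.

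Finally, since $\Psi_p(t)^{1/(p+1)}=\|(\theta^{-1/2}-\alpha_1^{-1/2})_+(\cdot,t)\|_{L^{p+1}(0,1)}$, which increases to the $L^{\infty}(0,1)$-norm as $p\to\infty$ (the interval has unit length), the previous bound yields $\|(\theta^{-1/2}-\alpha_1^{-1/2})_+(\cdot,t)\|_{L^{p+1}(0,1)}\le e^{Ce_0/((p+1)\varepsilon_0)}\bigl((c_2^{-1/2}-\alpha_1^{-1/2})_+^{p+1}+1\bigr)^{1/(p+1)}$, whose right side stays bounded as $p\to\infty$: the exponential tends to $1$ and the bracket to $K:=\max\{(c_2^{-1/2}-\alpha_1^{-1/2})_+,1\}$. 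Letting $p\to\infty$ gives $(\theta^{-1/2}-\alpha_1^{-1/2})_+(x,t)\le K$ for all $x,t$, i.e. $\theta^{-1/2}(x,t)\le\alpha_1^{-1/2}+K$, which is exactly \eqref{563} with $\widehat N=(\alpha_1^{-1/2}+K)^2$. (Equivalently, one may fix a single large $p$ and upgrade the $L^{p+1}_x$-bound on $\theta^{-1}$ to an $L^\infty_x$-bound by the one-dimensional embedding $W^{1,1}\hookrightarrow L^\infty$ together with the control of $\int_0^1\theta^\beta\theta_x^2/(u\theta^2)\,dx$ from \eqref{88}, in which case the admissible $p$ is chosen according to $\beta$.)

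I expect the main obstacle to be the first step: deriving the differential inequality with a constant truly independent of $p$, and in a way that never uses a pointwise bound on $V$ --- only its integrability --- so that the combination of Gronwall on short bad intervals and the limit $p\to\infty$ closes. The delicate points there are the Young splitting of the term $(w_x+u-\theta)\theta/u$ against the negative powers of $\theta$, and the clean execution of the truncation (the ``cut-off technique''), whereas the degenerate conductivity $\kappa=\theta^\beta$ turns out to be essentially harmless here since the conduction term retains its good sign for every $\beta\ge0$.
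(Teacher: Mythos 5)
Your proposal is correct in substance, and while it starts from the same test function $(\theta^{-1/2}-\alpha_1^{-1/2})_+^{p}\theta^{-3/2}$, the same Young splitting of the term $(w_x+u-\theta)\theta/u$ against the quadratic term, and the same $p\to\infty$ limit as the paper, it closes the Gronwall step in a genuinely different way. The paper's proof hinges on the pointwise oscillation estimate \eqref{tha2}, $\max_{x}(\alpha_1^{1/2}-\theta^{1/2})_+^2\le CV(t)$ (proved by a case split $\beta\in(0,1)$ versus $\beta\ge1$), which places the time-integrable weight $V(t)$ into the Gronwall coefficient so that a single application over $[0,\infty)$ suffices. You instead keep the crude source term $C(1+\Psi_p)$ and exploit $\int_0^\infty V\,dt\le e_0$ through Chebyshev: the ``bad'' set $\{V>\varepsilon_0\}$ has measure at most $e_0/\varepsilon_0$, at ``good'' times \eqref{9} together with \eqref{008} already gives a pointwise lower bound $\theta\ge\alpha_1/8$, and Gronwall is only run over bad intervals of uniformly bounded length. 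This buys you a proof that avoids \eqref{tha2} entirely (hence the $\beta$ case analysis), at the mild cost of invoking continuity of $\theta$ in $t$ at the anchor time $a$. One quantitative slip: your claimed differential inequality $\frac{d}{dt}\Psi_p\le C(\Psi_p+1)$ with $C$ independent of $p$ is not attainable, since $\psi_p'$ carries a factor $p+1$ and nothing cancels it; the correct form is $\frac{d}{dt}\Psi_p\le C(p+1)(\Psi_p+1)$ (the paper's \eqref{tha1} has the same feature, written as $\frac{4}{p+1}(\Psi_p)_t\le CV(1+\Psi_p)$). This is harmless for your argument: the Gronwall factor becomes $e^{C(p+1)e_0/\varepsilon_0}$, whose $(p+1)$-th root is the bounded constant $e^{Ce_0/\varepsilon_0}$ rather than a quantity tending to $1$, and the final bound $(\theta^{-1/2}-\alpha_1^{-1/2})_+\le e^{Ce_0/\varepsilon_0}\max\{(c_2^{-1/2}-\alpha_1^{-1/2})_+,1\}$ still yields \eqref{563}. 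You should simply correct the claim of $p$-independence rather than rely on it.
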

\begin{proof}
For $p>2$ , multiplying \eqref{003} by $(\theta^{-\frac{1}{2}}-\alpha_{1}^{-\frac{1}{2}})_{+}^{p}\theta^{-\frac{3}{2}}$
and integrating by parts, we have
\begin{equation}\label{tha1}
\begin{split}
&\frac{2}{p+1}\left(\int_{0}^{1}\Bigl(\theta^{-\frac{1}{2}}-\alpha_{1}^{-\frac{1}{2}}\Bigr)^{p+1}_{+}dx \right)_{t}+
\int_{0}^{1}\frac{\bigl(w_{x}+u-\theta\bigr)^{2}
\bigl(\theta^{-\frac{1}{2}}-\alpha_{1}^{-\frac{1}{2}}\bigr)^p_{+}}{u\theta^{\frac{3}{2}}}dx\\
&\leq -\int_{0}^{1}\frac{\bigl(w_{x}+u-\theta\bigr)\bigl(\theta^{-\frac{1}{2}}-\alpha_{1}^{-\frac{1}{2}}\bigr)^p_{+}}{u\theta^{\frac{1}{2}}}dx\\
&\leq \frac{1}{2}\int_{0}^{1}\frac{\bigl(w_{x}+u-\theta\bigr)^{2}\bigl(\theta^{-\frac{1}{2}}-\alpha_{1}^{-\frac{1}{2}}\bigr)^p_{+}}{u\theta^{\frac{3}{2}}}dx
+C\int_{0}^{1}\theta^{\frac{1}{2}}\bigl(\theta^{-\frac{1}{2}}-\alpha_{1}^{-\frac{1}{2}}\bigr)^p_{+}dx\\
&\leq \frac{1}{2}\int_{0}^{1}\frac{\bigl(w_{x}+u-\theta\bigr)^{2}\bigl(\theta^{-\frac{1}{2}}-\alpha_{1}^{-\frac{1}{2}}\bigr)^p_{+}}{u\theta^{\frac{3}{2}}}dx\\
&\quad +C\max_{x\in [0,1]}\bigr(\alpha_{1}^{\frac{1}{2}}-\theta^{\frac{1}{2}}\bigr)^2_{+}
\left(1+\int_{0}^{1}\bigl(\theta^{-\frac{1}{2}}-\alpha_{1}^{-\frac{1}{2}}\bigr)^{p+1}_{+}dx \right),
\end{split}
\end{equation}
where in the last inequality we have used Young's inequality and the simple fact
\begin{equation}
\begin{split}
&\theta^{\frac{1}{2}}  \Big(\theta^{-\frac{1}{2}}-\alpha_{1}^{-\frac{1}{2}}\Big)_{+}^p\\
&=\alpha_{1}^{-1}\left( \Big(\theta^{-\frac{1}{2}}-\alpha_{1}^{-\frac{1}{2}}\Big)_{+}^{p-1}+\alpha_{1}^{-\frac{1}{2}}   \Big(\theta^{-\frac{1}{2}}-\alpha_{1}^{-\frac{1}{2}}\Big)_{+}^{p-2}\right)\Big(\alpha_1^{\frac{1}{2}}-\theta^{\frac{1}{2}}\Big)_{+}^2.
\end{split}
\end{equation}
On the other hand, by \eqref{008} and \eqref{123}, we obtain that for any $\beta \in (0,1)$ ,
\begin{equation*}
\begin{split}
\max_{x\in [0,1]}\bigl(\alpha_{1}^{\frac{1}{2}}-\theta^{\frac{1}{2}}\bigr)_{+}&\leq C\int_{0}^{1}\theta^{-\frac{1}{2}}|\theta_{x}|dx\\
&\leq C\left(\int_{0}^{1}\theta^{\beta-2}\theta_{x}^{2}dx \right)^{\frac{1}{2}}\left(\int_{0}^{1}\theta^{1-\beta}dx \right)^{\frac{1}{2}}\\
&\leq CV^{\frac{1}{2}}(t),
\end{split}
\end{equation*}
and that for $\beta \geq 1$ ,
\begin{equation*}
\begin{split}
\max_{x\in [0,1]}\bigl(\alpha_{1}^{\frac{1}{2}}-\theta^{\frac{1}{2}}\bigr)_{+}&\leq C\max_{x\in [0,1]}\bigl(\alpha_{1}^{\frac{\beta}{2}}-\theta^{\frac{\beta}{2}}\bigr)_{+}\\
&\leq C\int_{0}^{1}\theta^{\frac{\beta}{2}-1}|\theta_{x}|dx\\
&\leq C\left(\int_{0}^{1}\frac{\theta^{\beta}\theta_{x}^{2} }{u\theta^{2}}dx \right)^{\frac{1}{2}}\\
&\leq CV^{\frac{1}{2}}(t).
\end{split}
\end{equation*}
In a word, for any $\beta > 0$,
\begin{equation}
\max_{x\in [0,1]}\bigl(\alpha_{1}^{\frac{1}{2}}-\theta^{\frac{1}{2}}\bigr)^2_{+}\leq CV(t).\label{tha2}
\end{equation}
Therefore, we deduce from \eqref{tha1} and \eqref{tha2} that
\begin{equation}
\begin{split}
&\frac{4}{p+1}\left(\int_{0}^{1}\Bigl(\theta^{-\frac{1}{2}}-\alpha_{1}^{-\frac{1}{2}}\Bigr)^{p+1}_{+}dx \right)_{t}+
\int_{0}^{1}\frac{\bigl(w_{x}+u-\theta\bigr)^{2}
\bigl(\theta^{-\frac{1}{2}}-\alpha_{1}^{-\frac{1}{2}}\bigr)^p_{+}}{u\theta^{\frac{3}{2}}}dx\\
&\leq CV(t) \left(1+\int_{0}^{1}\bigl(\theta^{-\frac{1}{2}}-\alpha_{1}^{-\frac{1}{2}}\bigr)^{p+1}_{+}dx \right),
\end{split}
\end{equation}
which, by Gronwall's inequality and \eqref{88}, implies that
\begin{gather}
\parallel\theta^{-\frac{1}{2}}-\alpha_{1} ^{-\frac{1}{2}} \parallel_{L^{p+1}[0,1]} \leq C.\notag
\end{gather}
Letting $p\to +\infty$, we establish \eqref{563} and finish the proof of  Lemma \ref{thbelow}.
\end{proof}

\begin{lemma}
There exists a positive constant C such that
\begin{gather}
\sup_{0\leq t \leq T}\int_{0}^{1}u_{x}^{2}dx+\int_{0}^{T}\int_{0}^{1}\frac{\theta_{x}^{2}}{\theta}dxdt\leq C,\label{541}
\end{gather}
for any $T>0$.
\end{lemma}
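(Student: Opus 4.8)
The plan is to obtain \eqref{541} by combining two a priori estimates: an $L^\infty_tL^2_x$ bound for $u_x$, extracted from an effective–flux identity, and the space–time bound $\int_0^T\!\int_0^1\theta^{-1}\theta_x^2\,dx\,dt\le C$, extracted from the temperature equation \eqref{003}. The starting point is the pointwise identity
\[
\Big(\frac{u_x}{u}-w\Big)_t=w+\Big(\frac{\theta}{u}\Big)_x ,
\]
which follows from $(\log u)_{xt}=(u_t/u)_x=(w_x/u)_x$ (by \eqref{001}) together with \eqref{002}, which rewrites $(w_x/u)_x$ as $w_t+w+(\theta/u)_x$. I will use throughout the bounds already established: $M^{-1}\le u\le M$ (see \eqref{008}), $\theta\ge\widehat{N}^{-1}$ (see \eqref{563}), the energetic estimate \eqref{88} (in particular $\int_0^TV(s)\,ds\le e_0$ with $V$ as in \eqref{Vt}), $\int_0^1w^2\,dx\le C$ (from \eqref{88}, the definition \eqref{004} of $w$, and $u\le M$), and the pointwise bound $\|\theta(\cdot,t)\|_{L^\infty}\le C\big(1+V(t)\big)$ coming from \eqref{9} and $u\le M$.

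For the $u_x$ bound, multiply the identity above by $\frac{u_x}{u}-w$ and integrate over $(0,1)$. The contribution $-\frac{\theta u_x}{u^2}\cdot\frac{u_x}{u}$ of $(\theta/u)_x$ produces the good dissipation $\int_0^1\frac{\theta u_x^2}{u^3}\,dx$, which by $\theta\ge\widehat{N}^{-1}$ and $u\le M$ dominates $c\int_0^1u_x^2\,dx$ and, using $\int w^2\le C$, even dominates $c_1\int_0^1(\frac{u_x}{u}-w)^2\,dx-C\int_0^1w^2\,dx$. The remaining terms are dispatched by Cauchy's inequality: the worst one, $\int\frac{\theta_xu_x}{u^2}\,dx$, splits as $\le\frac18\int\frac{\theta u_x^2}{u^3}+C\int\frac{\theta_x^2}{u\theta}$; the terms $\int\frac{\theta_xw}{u}\,dx$ and $\int\frac{\theta u_xw}{u^2}\,dx$ are bounded by $\frac18\int\frac{\theta u_x^2}{u^3}+C\int\frac{\theta_x^2}{u\theta}+C\|\theta\|_{L^\infty}$ (since $\int w^2\le C$); the purely $w$-terms contribute $C\int w^2$. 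Absorbing the dissipative parts yields
\[
\frac{d}{dt}\int_0^1\Big(\frac{u_x}{u}-w\Big)^2dx+c_1\int_0^1\Big(\frac{u_x}{u}-w\Big)^2dx\le C\int_0^1\frac{\theta_x^2}{u\theta}\,dx+C\int_0^1w^2\,dx+C\|\theta(\cdot,t)\|_{L^\infty}.
\]
Multiplying by $e^{c_1t}$ and integrating, and using the elementary inequality $\int_0^Te^{c_1(t-T)}(h_1+h_2)(t)\,dt\le\|h_1\|_{L^1(0,T)}+c_1^{-1}\|h_2\|_{L^\infty(0,T)}$, the $O(1)$-but-not-integrable forcing $\int w^2$ contributes only a bounded amount, the forcing $C\|\theta\|_{L^\infty}\le C(1+V)$ is handled by $\int_0^TV\le e_0$, and $\int_0^T\!\int\frac{\theta_x^2}{u\theta}$ is $L^1$ once the next step is done. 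Hence $\sup_{0\le t\le T}\int_0^1(\frac{u_x}{u}-w)^2\,dx\le C$, and with $\int w^2\le C$ and $u\le M$ this gives $\sup_{0\le t\le T}\int_0^1u_x^2\,dx\le C$.

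For $\int_0^T\!\int_0^1\theta^{-1}\theta_x^2$: when $\beta\ge1$ it is immediate, since $\theta^{-1}\theta_x^2=\theta^{\beta-2}\theta_x^2\cdot\theta^{1-\beta}\le\widehat{N}^{\beta-1}\theta^{\beta-2}\theta_x^2$ by \eqref{563}, while $\int_0^T\!\int\frac{\theta^{\beta-2}\theta_x^2}{u}\le\int_0^TV(s)\,ds\le e_0$ by \eqref{88} and $u\le M$; this also gives $\int_0^T\!\int\frac{\theta_x^2}{u\theta}\le M\int_0^T\!\int\theta^{-1}\theta_x^2\le C$, the input needed above. For $\beta\in(0,1)$ I would multiply \eqref{003} by $\theta^{-\beta}$ (replacing $\frac{\theta^{1-\beta}}{1-\beta}$ by $\log\theta$ when $\beta=1$) and integrate by parts in $x$, using $\theta_x|_{x=0,1}=0$, to reach
\[
\frac{d}{dt}\,\frac1{1-\beta}\int_0^1\theta^{1-\beta}dx+\beta\int_0^1\frac{\theta_x^2}{u\theta}\,dx=\int_0^1\frac{(w_x+u-\theta)^2}{u\theta^\beta}\,dx+\int_0^1\frac{(w_x+u-\theta)\theta^{1-\beta}}{u}\,dx .
\]
Since $\widehat{N}^{\beta-1}\le\int_0^1\theta^{1-\beta}dx\le1+\alpha_2$ (by \eqref{563} and \eqref{123}), integrating in $t$ reduces everything to a uniform-in-$T$ bound for the two source terms on the right; one then multiplies this identity by a large constant and adds it to the $u_x$-inequality so that the $C\int\frac{\theta_x^2}{u\theta}$ on the right is absorbed into $\beta\int\frac{\theta_x^2}{u\theta}$ on the left, after which the exponential-weight Gronwall above closes both estimates simultaneously.

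The hard part is precisely the control of those two source terms when $\beta\in(0,1)$: at this stage only the lower bound for $\theta$ and the weak bound $\|\theta\|_{L^\infty}\le C(1+V(t))$ are available, so the factors $\theta^{1-\beta}$ and $\theta^{2-\beta}$ occurring there are not uniformly controlled, and a crude Cauchy–Schwarz would bound their time integrals only by quantities like $\int_0^TV^2\,dt$, which is not provided by \eqref{88}. This is where cut-off arguments are needed: one decomposes $\theta=(\theta-\bar{\theta})+\bar{\theta}$ with $\bar{\theta}\le\alpha_2$, exploits the oscillation bound $\|\theta-\bar{\theta}\|_{L^\infty}^2\le CV(t)$ (obtained just as in Lemmas \ref{2.1} and \ref{thbelow}), and feeds back the quantities already bounded in the $u_x$-step, so that the dangerous powers of $\theta$ are traded for bounded quantities plus small multiples of the dissipation $\int\frac{\theta_x^2}{u\theta}\,dx$ (to be absorbed) and of $V(t)$ (whose time integral is $\le e_0$); the exponential weight then absorbs the remaining $O(1)$-but-not-$L^1$ pieces. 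Once this is in place, one assembles, applies the exponential-weight Gronwall with \eqref{88} and the two-sided bound on $\int\theta^{1-\beta}dx$, and reads off both $\sup_{0\le t\le T}\int_0^1u_x^2\,dx\le C$ and $\int_0^T\!\int_0^1\theta^{-1}\theta_x^2\,dx\,dt\le M\int_0^T\!\int_0^1\frac{\theta_x^2}{u\theta}\,dx\,dt\le C$, which is \eqref{541}.
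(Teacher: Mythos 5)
Your treatment of the $u_x$ part is sound and is essentially the paper's argument: the paper rewrites \eqref{002} as $(w-\frac{u_x}{u})_t=-w-\frac{\theta_x}{u}+\frac{\theta u_x}{u^2}$, tests with $w-\frac{u_x}{u}$, and closes with the damped Gronwall exactly as you do (it uses $\max_x\theta\le C(1+\int_0^1\theta^{-1}\theta_x^2\,dx)$ in place of your $\|\theta\|_{L^\infty}\le C(1+V)$, but both work). Likewise, your observation that $\int_0^T\!\int_0^1\theta^{-1}\theta_x^2\le C$ is immediate for $\beta\ge1$ from \eqref{563} and \eqref{88} matches the paper. The gap is in the case $\beta\in(0,1)$, which is the real content of the lemma, and it is twofold. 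First, your displayed identity has the dissipation on the wrong side: testing \eqref{003} with $\theta^{-\beta}$ and integrating by parts gives $\frac{1}{1-\beta}\frac{d}{dt}\int_0^1\theta^{1-\beta}dx=\int_0^1\frac{(w_x+u-\theta)^2}{u\theta^\beta}dx+\int_0^1\frac{(w_x+u-\theta)\theta^{1-\beta}}{u}dx+\beta\int_0^1\frac{\theta_x^2}{u\theta}dx$; after flipping signs the quadratic source is harmless (it joins the dissipation), but the linear source must be integrable in time, and Young's inequality leaves $\int_0^T\!\int_0^1\theta^{2-\beta}/u$, which grows like $T$ because $\int_0^1\theta\,dx$ stays of order one. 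Second, your proposed rescue --- ``the exponential weight absorbs the remaining $O(1)$-but-not-$L^1$ pieces'' --- cannot work here: exponential weights recover $\sup_t$ bounds for damped quantities, but the quantity you need is a time integral of the dissipation, and $e^{-cT}\int_0^Te^{cs}D(s)\,ds\le C$ does not imply $\int_0^TD\,ds\le C$. So an $O(1)$ forcing genuinely destroys this half of the estimate; it must be eliminated, not absorbed. (Feeding back quantities from the $u_x$-step is also circular, since that step needs $\int_0^T\!\int_0^1\theta^{-1}\theta_x^2\le C$ as input.)

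The paper's device, which you gesture at but do not carry out, is to test \eqref{003} with the truncated multiplier $\bigl(\theta^{(1-\beta)/2}-\alpha_2^{(1-\beta)/2}\bigr)_+\theta^{-(1+\beta)/2}$ rather than $\theta^{-\beta}$. This localizes every term to $\{\theta>\alpha_2\}$, where the cut-off factor vanishes at $\theta=\alpha_2$ and satisfies $\max_x\bigl(\theta^{(1-\beta)/2}-\alpha_2^{(1-\beta)/2}\bigr)_+^2\le C\max_x\bigl(\theta^{1/2}-\alpha_2^{1/2}\bigr)_+^2\le CV(t)$; with this, each source term is bounded by a small multiple of the truncated dissipation plus $C(\varepsilon)V(t)$, and $\int_0^TV\le e_0$ closes the estimate with no $O(1)$ leftover, yielding $\int_0^T\!\int_0^1\theta^{-1}\theta_x^2\,1_{(\theta>\alpha_2)}\le C$. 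The complementary region is then covered directly by \eqref{88}, since $\theta^{-1}\theta_x^2\le\alpha_2^{1-\beta}\theta^{\beta-2}\theta_x^2$ where $\theta\le\alpha_2$. This cut-off computation is the heart of the lemma, and your proposal leaves it as a sketch whose stated closing mechanism would not close.
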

\begin{proof}
First, for $\beta\in (0,1)$, multiplying \eqref{003} by $\bigl(\theta^{(1-\beta)/2}-\alpha_{2}^{(1-\beta)/2}\bigr)_{+}\theta^{-(1+\beta)/2}$ gives
\begin{equation}\label{tha22}
\begin{split}
&-\frac{1}{1-\beta}\left(\int_{0}^{1}\bigl(\theta^{(1-\beta)/2}-\alpha_{2}^{(1-\beta)/2}\bigr)_{+}^{2}dx \right)_{t}
+\beta\int_{0}^{1}\frac{\theta_{x}^{2}}{u\theta}1_{(\theta>\alpha_{2})}dx\\
&+\int_{0}^{1}\frac{\bigl(w_{x}+u-\theta\bigr)^{2}\bigl(\theta^{(1-\beta)/2}-\alpha_{2}^{(1-\beta)/2}\bigr)_{+}}{u\theta^{(1+\beta)/2}}dx\\
&=-\int_{0}^{1}\frac{\bigl(w_{x}+u-\theta\bigr)\bigl(\theta^{(1-\beta)/2}-\alpha_{2}^{(1-\beta)/2}\bigr)_{+}}{u\theta^{(\beta-1)/2}}dx
+\frac{1+\beta}{2}\alpha_{2}^{\frac{1-\beta}{2}}\int_{0}^{1}\frac{\theta^{\frac{\beta-3}{2}}\theta_{x}^{2}}{u}1_{(\theta>\alpha_{2})}dx\\
&\leq \frac{\beta}{2}\int_{0}^{1}\frac{\theta_{x}^{2}}{u\theta}1_{(\theta>\alpha_{2})}dx+C\int_{0}^{1}\frac{\theta^{\beta}\theta_{x}^{2}}{u\theta^{2}}dx
-\int_{0}^{1}\frac{\bigl(w_{x}+u-\theta\bigr)\bigl(\theta^{(1-\beta)/2}-\alpha_{2}^{(1-\beta)/2}\bigr)_{+}}{u\theta^{(\beta-1)/2}}dx\\
&\leq \frac{\beta}{2}\int_{0}^{1}\frac{\theta_{x}^{2}}{u\theta}1_{(\theta>\alpha_{2})}dx+ C\varepsilon \int_{0}^{1}\frac{\bigl(w_{x}+u-\theta\bigr)^{2}\bigl(\theta^{(1-\beta)/2}-\alpha_{2}^{(1-\beta)/2}\bigr)_{+}}{u\theta^{(1+\beta)/2}}dx+C(\varepsilon)V(t),
\end{split}
\end{equation}
where we have used \eqref{11212}, \eqref{008}, Young's inequality and the following two estimates
\begin{equation}
\begin{split}
&\int_{0}^{1}\frac{|w_{x}+u-\theta|\bigl(\theta^{(1-\beta)/2}-\alpha_{2}^{(1-\beta)/2}\bigr)_{+}}{u\theta^{(\beta-1)/2}}dx\\
&=\int_{0}^{1}\frac{|w_{x}+u-\theta|\bigl(\theta^{(1-\beta)/2}-\alpha_{2}^{(1-\beta)/2}\bigr)}{u\theta^{(\beta-1)/2}}\left(1_{(\theta>2\alpha_{2})}
+1_{(2\alpha_{2}\geq\theta>\alpha_{2})}\right)dx\\
&\leq \varepsilon\int_{0}^{1}\frac{\bigl(w_{x}+u-\theta\bigr)^{2}}
{u\theta^{\beta}}1_{(\theta>2\alpha_{2})}dx+C(\varepsilon)\max_{x\in[0,1]}
\bigl(\theta^{(1-\beta)/2}-\alpha_{2}^{(1-\beta)/2}\bigr)_{+}^{2}\int_{0}^{1}\theta dx\\
&\quad +C\int_{0}^{1}\frac{\bigl(w_{x}+u-\theta\bigr)^{2}}{u\theta}dx+C\int_{0}^{1}\bigl(\theta^{(1-\beta)/2}-
\alpha_{2}^{(1-\beta)/2}\bigr)_{+}^{2}\theta^{2-\beta}1_{(\theta\leq 2\alpha_{2})}dx\\
&\leq C\varepsilon\int_{0}^{1}\frac{\bigl(w_{x}+u-\theta\bigr)^{2}\bigl(\theta^{(1-\beta)/2}-
\alpha_{2}^{(1-\beta)/2}\bigr)_{+}}{u\theta^{(1+\beta)/2}}dx+C(\varepsilon)V(t),
\end{split}
\end{equation}
and
\begin{equation}
\begin{split}
\max_{x\in [0,1]} \bigl(\theta^{(1-\beta)/2}-\alpha_{2}^{(1-\beta)/2}\bigr)_{+}^{2}&\leq C\max_{x\in [0,1]}
\bigl(\theta^{\frac{1}{2}}-\alpha_{2}^{\frac{1}{2}}\bigr)^{2}_{+}\\
&\leq CV(t).
\end{split}
\end{equation}
Therefore, for $\beta\in (0,1) $ , choosing $\varepsilon$ suitably small, we deduce from \eqref{tha22} that
\begin{equation}
-\frac{2}{1-\beta}\left(\int_{0}^{1}\bigl(\theta^{(1-\beta)/2}-\alpha_{2}^{(1-\beta)/2}\bigr)_{+}^{2}dx \right)_{t}
+\beta\int_{0}^{1}\frac{\theta_{x}^{2}}{u\theta}1_{(\theta>\alpha_{2})}dx
\leq CV(t).
\end{equation}
Integrating this over $[0,T]$, together with  \eqref{008} and \eqref{88}, we check that
\begin{gather}
\int_{0}^{T}\int_{0}^{1}\theta^{-1}\theta_{x}^{2}dxdt\leq C,\label{202}
\end{gather}
which still holds for the case that $\beta \geq 1$ due to \eqref{88} and \eqref{563}.
Furthermore, we have
\begin{gather}
\int_{0}^{T}\max_{x\in [0,1]}(\theta(x,t)-\bar{\theta})^{2}dt\leq \int_{0}^{T}\left(\int_{0}^{1}\theta^{-1}\theta_{x}^{2}dx \right)
\left(\int_{0}^{1}\theta dx \right)dt\leq C.
\end{gather}
We rewrite \eqref{002} as
\begin{gather}
\left(w-\frac{u_{x}}{u} \right)_{t}=-w-\frac{\theta_{x}}{u}+\frac{\theta u_{x}}{u^{2}},
\end{gather}
multiplying  this by $(w-\frac{u_{x}}{u})$ and integrating over $[0,1]$, by \eqref{008}, \eqref{563} and Young's inequality, we get
\begin{equation}
\begin{split}
&\frac{1}{2}\frac{d}{dt}\int_{0}^{1}\left(w-\frac{u_{x}}{u} \right)^{2}dx+\int_{0}^{1}\left(w^{2}+\frac{\theta u_{x}^{2}}{u^{3}} \right)dx\\
&=\int_{0}^{1}\left(\frac{wu_{x}}{u}-\frac{w\theta_{x}}{u}+\frac{\theta_{x}u_{x}}{u^{2}}+\frac{w\theta u_{x}}{u^{2}} \right)dx\\
&\leq \frac{1}{2}\int_{0}^{1}\frac{\theta u_{x}^{2}}{u^{3}}dx+C\int_{0}^{1}\left(\frac{\theta_{x}^{2}}{\theta}+\theta w^{2} \right)dx,\label{62}
\end{split}
\end{equation}
which together with  \eqref{008} and \eqref{563} yields
\begin{equation}
\begin{split}
&\frac{d}{dt}\int_{0}^{1}\left(w-\frac{u_{x}}{u} \right)^{2}dx+\varepsilon_0 \int_{0}^{1}\left(w-\frac{u_{x}}{u} \right)^{2}dx\\
&\leq C\int_{0}^{1}\left(\frac{\theta_{x}^{2}}{\theta}+\theta w^{2} \right)dx,\label{24}
\end{split}
\end{equation}
for $\varepsilon_0\triangleq \min\{1,\frac{1}{2M\widehat{N}}\}$ depending only on initial data and $\beta$. As a result, by \eqref{202} , we obtain
\begin{equation}\label{wux1}
\begin{split}
&\int_{0}^{1}\left(w-\frac{u_{x}}{u} \right)^{2}dx\\
&\leq C\int_{0}^{t}e^{\varepsilon_0(s-t)}\int_{0}^{1}\theta w^{2}dxds+C\int_{0}^{t}e^{\varepsilon_0(s-t)}\int_{0}^{1}\frac{\theta_{x}^{2}}{\theta} dxds\\
&\leq C,
\end{split}
\end{equation}
where we have used the following facts
\begin{gather}
\max_{x\in [0,1]}\theta\leq C\left(1+\int_{0}^{1}\frac{\theta_{x}^{2}}{\theta}dx \right),
\end{gather}
and
\begin{gather}
\int_{0}^{1}w^{2}dx\leq C,\label{w2}
\end{gather}
due to \eqref{123}, \eqref{004}, \eqref{88} and \eqref{008}. Since
$$\int_{0}^{1}\frac{u_{x}^2}{u^2}dx\leq 2\int_{0}^{1}\left(w-\frac{u_{x}}{u} \right)^{2}dx+2\int_{0}^{1}w^{2}dx,$$
together with \eqref{wux1}, \eqref{w2} and \eqref{008}, we have $\int_{0}^{1}u_{x}^{2}dx\leq C$ and finish the proof.
\end{proof}

\begin{lemma}
There exists a positive constant C such that
\begin{gather}
\int_{0}^{T}\int_{0}^{1}\theta^{\beta}\theta_{x}^{2}dxdt\leq C+C\int_{0}^{T}\left(\int_{0}^{1}\bigl(w_{x}+u-\theta\bigr)^{2}dx \right)^{2}dt,\label{7487}
\end{gather}
for any $T>0$.
\end{lemma}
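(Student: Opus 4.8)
The plan is to multiply the temperature equation \eqref{003} by the cut-off function $(\theta-\alpha_{2})_{+}$, which is supported on the set $\{\theta>\alpha_{2}\}$ where the temperature is ``large'', integrate over $[0,1]$, and integrate the diffusion term by parts; the boundary terms vanish because of \eqref{11212}. This gives
\[
\frac{1}{2}\frac{d}{dt}\int_{0}^{1}(\theta-\alpha_{2})_{+}^{2}\,dx+\int_{0}^{1}\frac{\theta^{\beta}\theta_{x}^{2}}{u}1_{(\theta>\alpha_{2})}\,dx=\int_{0}^{1}\frac{(w_{x}+u-\theta)^{2}(\theta-\alpha_{2})_{+}}{u}\,dx+\int_{0}^{1}\frac{(w_{x}+u-\theta)\theta(\theta-\alpha_{2})_{+}}{u}\,dx .
\]
Integrating over $[0,T]$, discarding the nonnegative terminal term, and using $\int_{0}^{1}(\theta_{0}-\alpha_{2})_{+}^{2}\,dx\le C$ (since $\theta_{0}\in H^{1}\hookrightarrow L^{\infty}$), the claim reduces to bounding
\[
J_{1}\triangleq\int_{0}^{T}\!\!\int_{0}^{1}\frac{(w_{x}+u-\theta)^{2}(\theta-\alpha_{2})_{+}}{u}\,dxdt,\qquad J_{2}\triangleq\int_{0}^{T}\!\!\int_{0}^{1}\frac{(w_{x}+u-\theta)\theta(\theta-\alpha_{2})_{+}}{u}\,dxdt
\]
by $C+C\int_{0}^{T}\bigl(\int_{0}^{1}(w_{x}+u-\theta)^{2}\,dx\bigr)^{2}dt$; the complementary region $\{\theta\le\alpha_{2}\}$ is handled at the very end, since there $\theta^{\beta}\theta_{x}^{2}\le\alpha_{2}^{\beta+1}\theta^{-1}\theta_{x}^{2}$, whose space-time integral is controlled by \eqref{541}.

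Before estimating $J_{1},J_{2}$ I would collect what is used. By \eqref{123}, $\bar\theta\le\alpha_{2}$, hence $0\le(\theta-\alpha_{2})_{+}\le(\theta-\bar\theta)_{+}\le m(t)$, where $m(t)\triangleq\max_{x\in[0,1]}|\theta(x,t)-\bar\theta(t)|$; by Cauchy's inequality together with \eqref{123}, $m(t)\le C\bigl(\int_{0}^{1}\theta^{-1}\theta_{x}^{2}\,dx\bigr)^{1/2}$, so \eqref{541} gives $\int_{0}^{T}m(t)^{2}\,dt\le C$. Also $\int_{0}^{1}(\theta-\alpha_{2})_{+}^{2}\,dx\le m(t)\int_{0}^{1}(\theta-\alpha_{2})_{+}\,dx\le\alpha_{2}m(t)$. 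On $\{\theta\le2\alpha_{2}\}$ one has $u\theta\le2M\alpha_{2}$ by \eqref{008}, whence $\int_{\{\theta\le2\alpha_{2}\}}(w_{x}+u-\theta)^{2}\,dx\le C\,V(t)$ by \eqref{Vt}, and $\int_{0}^{T}V(t)\,dt\le e_{0}$ by \eqref{88}. Finally, on $\{\theta>2\alpha_{2}\}$ we have $\alpha_{2}\le\theta/2$, so $\theta\le2(\theta-\alpha_{2})_{+}$ there.

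For $J_{1}$, use $(\theta-\alpha_{2})_{+}\le m(t)$, $u^{-1}\le M$, and then Cauchy--Schwarz in $t$ with $\int_{0}^{T}m^{2}\,dt\le C$:
\[
J_{1}\le M\int_{0}^{T}m(t)\!\int_{0}^{1}(w_{x}+u-\theta)^{2}dx\,dt\le C\Bigl(\int_{0}^{T}\bigl(\int_{0}^{1}(w_{x}+u-\theta)^{2}dx\bigr)^{2}dt\Bigr)^{1/2},
\]
and Young's inequality yields $J_{1}\le C+C\int_{0}^{T}\bigl(\int_{0}^{1}(w_{x}+u-\theta)^{2}dx\bigr)^{2}dt$. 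For $J_{2}$ I would split the inner integral at $\theta=2\alpha_{2}$. On $\{\alpha_{2}<\theta\le2\alpha_{2}\}$ the weight $\theta(\theta-\alpha_{2})_{+}/u$ is at most $Cm(t)$, so that part of $J_{2}$ is $\le C\int_{0}^{T}m(t)\int_{\{\theta\le2\alpha_{2}\}}|w_{x}+u-\theta|\,dx\,dt\le C\int_{0}^{T}m(t)V(t)^{1/2}\,dt\le C$ by Cauchy--Schwarz and the bounds on $\int m^{2}$ and $\int V$. On $\{\theta>2\alpha_{2}\}$ one has $\theta(\theta-\alpha_{2})_{+}/u\le2M(\theta-\alpha_{2})_{+}^{2}\le Cm(t)(\theta-\alpha_{2})_{+}$; Cauchy--Schwarz in $x$ with $\int_{0}^{1}(\theta-\alpha_{2})_{+}^{2}\,dx\le\alpha_{2}m(t)$ bounds that part by $C\int_{0}^{T}m(t)^{3/2}\bigl(\int_{0}^{1}(w_{x}+u-\theta)^{2}dx\bigr)^{1/2}dt$, and then H\"older's inequality with exponents $\tfrac{4}{3}$ and $4$, together with $\int_{0}^{T}m^{2}\,dt\le C$, gives $C\bigl(\int_{0}^{T}(\int_{0}^{1}(w_{x}+u-\theta)^{2}dx)^{2}dt\bigr)^{1/4}\le C+C\int_{0}^{T}(\int_{0}^{1}(w_{x}+u-\theta)^{2}dx)^{2}dt$. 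Combining these, adding the $\{\theta\le\alpha_{2}\}$ contribution, and using $u^{-1}\ge M^{-1}$ to pass from $\int\theta^{\beta}\theta_{x}^{2}u^{-1}1_{(\theta>\alpha_{2})}$ to $\int\theta^{\beta}\theta_{x}^{2}1_{(\theta>\alpha_{2})}$, we obtain \eqref{7487}.

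The delicate point is the cross term $J_{2}$: the extra factor $\theta$ it carries would, without a cut-off, generate quantities of the type $\int_{0}^{T}\max_{x}\theta^{2}\,dt$ or $\int_{0}^{T}\max_{x}\theta^{3}\,dt$, which are only $L^{1}$ --- and not $L^{\infty}$ --- in time, and would therefore force a bound depending on $T$. The cut-off at $\alpha_{2}$ replaces the effective coefficient by $m(t)\sim\bigl(\int_{0}^{1}\theta^{-1}\theta_{x}^{2}dx\bigr)^{1/2}\in L^{2}(0,T)$, while the further split at $2\alpha_{2}$ allows one to use the $L^{1}(0,T)$-integrability of $V$ on the intermediate temperature range; these two devices are precisely what keep every constant independent of $T$. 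One must also be careful throughout to pair each factor $\int_{0}^{1}(w_{x}+u-\theta)^{2}dx$ (via Cauchy--Schwarz or H\"older in $t$) with an $L^{2}$- or $L^{4/3}$-in-time factor, so that it enters only through its $L^{2}(0,T)$-norm, i.e. only through $\int_{0}^{T}(\int_{0}^{1}(w_{x}+u-\theta)^{2}dx)^{2}dt$, and never as a bare $\int_{0}^{T}\int_{0}^{1}(w_{x}+u-\theta)^{2}dxdt$, which is not yet available at this stage of the argument.
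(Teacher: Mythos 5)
Your proof is correct, and it reaches \eqref{7487} by a genuinely leaner route than the paper. The paper's proof has two stages: it first tests \eqref{003} with $(1-\alpha_{2}/\theta)_{+}$ to obtain the intermediate bound $\int_{0}^{T}\int_{0}^{1}(w_{x}+u-\theta)^{2}dxdt\leq C$ (their \eqref{7412}), and only then tests with $(\theta-\alpha_{2})_{+}$, closing the resulting inequality by Gronwall with $\int_{0}^{1}(w_{x}+u-\theta)^{2}dx$ as the (time-integrable, thanks to \eqref{7412}) Gronwall coefficient. You start directly from the $(\theta-\alpha_{2})_{+}$ multiplier and dispense with both \eqref{7412} and Gronwall by pairing every occurrence of $\int_{0}^{1}(w_{x}+u-\theta)^{2}dx$ with an $L^{2}(0,T)$ or $L^{4/3}(0,T)$ weight built from $m(t)\lesssim(\int_{0}^{1}\theta^{-1}\theta_{x}^{2}dx)^{1/2}$ (controlled by \eqref{541}) and from $V(t)$ (controlled by \eqref{88}); the split at $2\alpha_{2}$, the bound $\int_{0}^{1}(\theta-\alpha_{2})_{+}^{2}dx\leq\alpha_{2}m(t)$, and the Young/H\"older exponents all check out, as does the treatment of the region $\{\theta\leq\alpha_{2}\}$ via \eqref{541}. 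Your closing remark that the bare space-time $L^{2}$ bound on $w_{x}+u-\theta$ is ``not yet available'' is the one point of divergence worth flagging: the paper in fact makes it available as the very first step of this proof, and that estimate \eqref{7412} is then reused repeatedly in the subsequent lemmas (e.g.\ in bounding $I_1$--$I_3$ and $\theta_{xx}$ later). So while your argument fully proves the stated lemma, anyone adopting it would still need to supply \eqref{7412} separately for the rest of the paper to go through; what your version buys in exchange is a self-contained, Gronwall-free derivation of \eqref{7487} that exposes exactly which time-integrability is doing the work.
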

\begin{proof}
First, multiplying \eqref{003} by $(1-\alpha_{2}/\theta)_{+}$, and by \eqref{123}, we obtain
\begin{equation}\label{wxuth1}
\begin{split}
&-\frac{d}{dt}\int_{0}^{1}\Big(\alpha_{2}(\theta -\log \theta-1)+(1-\alpha_{2})\theta\Big)\,1_{(\theta>\alpha_{2})} dx+\int_{0}^{1}\frac{(w_{x}+u-\theta)^{2}(\theta-\alpha_{2})_{+}}{u\theta}dx\\
&\leq C\int_{0}^{1}\frac{\theta^{\beta}\theta_{x}^{2}}{u\theta^{2}}dx+C\int_{0}^{1}\frac{(w_{x}+u-\theta)^{2}}{u\theta}dx+
C\int_{0}^{1}(\theta-\alpha_{2})_{+}^{2}\theta dx\\
&\leq CV(t)+C\max_{x\in [0,1]}(\theta-\alpha_{2})_{+}^{2}\\
&\leq CV(t)+C\int_{0}^{1}\frac{\theta_{x}^{2}}{\theta}dx,
\end{split}
\end{equation}
where in the last inequality we have used the fact
\begin{gather}
\max_{x\in [0,1]}(\theta-\alpha_{2})_{+}^{2}\leq C\int_{0}^{1}\frac{\theta_{x}^{2}}{\theta}dx.
\end{gather}
Integrating \eqref{wxuth1} over $[0,T]$, together with \eqref{88},  \eqref{008} and \eqref{541}, we get
\begin{gather}
\int_{0}^{T}\int_{0}^{1}(w_{x}+u-\theta)^{2}dxdt\leq C.\label{7412}
\end{gather}
Next, multiplying \eqref{003} by $(\theta-\alpha_{2})_{+}$ and using Young's inequality, we have
\begin{equation}
\begin{split}
&\frac{1}{2}\frac{d}{dt}\int_{0}^{1}(\theta-\alpha_{2})_{+}^{2}dx+\int_{0}^{1}\frac{\theta^{\beta}\theta_{x}^{2}}{u}1_{(\theta>\alpha_{2})}dx\\
&=\int_{0}^{1}\frac{(w_{x}+u-\theta)^{2}(\theta-\alpha_{2})_{+}}{u}dx+\int_{0}^{1}\frac{(w_{x}+u-\theta)\theta(\theta-\alpha_{2})_{+}}{u}dx\\
&\leq C\max_{x\in [0,1]}(\theta-\alpha_{2})_{+}\int_{0}^{1}(w_{x}+u-\theta)^{2}dx+C\int_{0}^{1}(\theta-\alpha_{2})_{+}^{2}|w_{x}+u-\theta|dx\\
&\quad +C\int_{0}^{1}(\theta-\alpha_{2})_{+}\,|w_{x}+u-\theta|dx\\
&\leq C\max_{x\in [0,1]}(\theta-\alpha_{2})_{+}^{2}+C\left(\int_{0}^{1}(w_{x}+u-\theta)^{2}dx \right)^{2}+C\int_{0}^{1}(w_{x}+u-\theta)^{2}dx\\
&\quad +C\int_{0}^{1}(\theta-\alpha_{2})_{+}^{2}dx\int_{0}^{1}(w_{x}+u-\theta)^{2}dx\\
&\leq C\int_{0}^{1}\frac{\theta_{x}^{2}}{\theta}dx+C\left(\int_{0}^{1}(w_{x}+u-\theta)^{2}dx \right)^{2}+C\int_{0}^{1}(w_{x}+u-\theta)^{2}dx\\
&\quad +C\int_{0}^{1}(\theta-\alpha_{2})_{+}^{2}dx\int_{0}^{1}(w_{x}+u-\theta)^{2}dx.
\end{split}
\end{equation}
By Gronwall's inequality, we deduce from \eqref{008}, \eqref{541} and \eqref{7412} that
\begin{equation}
\begin{split}
&\sup_{t\in[0,T]}\int_{0}^{1}(\theta-\alpha_{2})_{+}^{2}dx+\int_{0}^{T}\int_{0}^{1}\theta^{\beta}\theta_{x}^{2}1_{(\theta>\alpha_{2})}dxdt\\
&\leq C+C\int_{0}^{T}\left(\int_{0}^{1}(w_{x}+u-\theta)^{2}dx \right)^{2}dt,
\end{split}
\end{equation}
which together with \eqref{88} and \eqref{008} leads to \eqref{7487}.
\end{proof}

\begin{lemma}
There exists a positive constant C such that
\begin{gather}
\int_{0}^{1}(w_{x}+u-\theta)^{2}dx+\int_{0}^{T}\int_{0}^{1}\Bigl((w_{xx}+u_x-\theta_x)^{2}+\theta(w_{x}+u-\theta)^{2}\Bigr)dxdt\leq C , \label{009}
\end{gather}
for any $T>0$.
\end{lemma}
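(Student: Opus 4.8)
The plan is to study the quantity $g:=w_x+u-\theta$, which vanishes at $x=0,1$ by \eqref{1212} and records the deviation of the solution from the limiting profile. Differentiating \eqref{001}--\eqref{003} one verifies that
$$
g_t=\left(\frac{g}{u}\right)_{xx}-\frac{g(g+\theta)}{u}-\left(\frac{\theta^\beta\theta_x}{u}\right)_x ,
$$
together with $g(0,t)=g(1,t)=0$ and $\theta_x(0,t)=\theta_x(1,t)=0$. Multiplying by $g$, integrating over $[0,1]$ and integrating by parts (every boundary term disappears because $g=\theta_x=0$ at the endpoints) yields the basic identity
$$
\frac12\frac{d}{dt}\int_0^1 g^2\,dx+\int_0^1\frac{g_x^2}{u}\,dx+\int_0^1\frac{\theta g^2}{u}\,dx=\int_0^1\frac{gg_xu_x}{u^2}\,dx-\int_0^1\frac{g^3}{u}\,dx+\int_0^1\frac{g_x\theta^\beta\theta_x}{u}\,dx .
$$

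On the right I would estimate the first term via Young's inequality, the Sobolev bound $\|g\|_{L^\infty}^2\le 2\|g\|_{L^2}\|g_x\|_{L^2}$ (legitimate since $g$ vanishes at a boundary point) and $\int_0^1u_x^2\,dx\le C$ from \eqref{541}; the second term via $\int_0^1|g|^3/u\,dx\le M^{-1}\|g\|_{L^\infty}\|g\|_{L^2}^2$, Young's inequality and $\|g\|_{L^\infty}\le\sqrt2\|g\|_{L^2}^{1/2}\|g_x\|_{L^2}^{1/2}$; and the third term via Young's inequality and \eqref{008}. After absorbing the small multiples of $\int g_x^2/u$ and keeping $\int\theta g^2/u$ on the left, then integrating in time and using the already available bound $\int_0^T\|g\|_{L^2}^2\,dt\le C$ from \eqref{7412} (together with $\int_0^T\|g\|_{L^2}^{10/3}\,dt\le(\sup_{[0,T]}\|g\|_{L^2}^2)^{2/3}\int_0^T\|g\|_{L^2}^2\,dt$ and Young's inequality), the lemma is reduced to establishing
$$
\int_0^T\!\!\int_0^1\theta^{2\beta}\theta_x^2\,dxdt\le C .
$$

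The step I expect to be the main obstacle is precisely this last bound: because $\kappa=\theta^\beta$ is degenerate and nonlinear there is as yet no $L^\infty$ bound on $\theta$, so the weight $\theta^{2\beta}$ cannot simply be discarded. I would obtain it from a companion estimate for the temperature, testing \eqref{003} against the cut-off multiplier $\bigl(\theta^{\beta+1}-\alpha_2^{\beta+1}\bigr)_+$, where $\alpha_2$ is the upper bound for the mean temperature from \eqref{123}; after integration by parts this produces an inequality of the form
$$
\frac{d}{dt}\int_0^1 F(\theta)\,dx+c\int_0^1\frac{\theta^{2\beta}\theta_x^2}{u}\,1_{(\theta>\alpha_2)}\,dx\le C\int_0^1\frac{(g^2+g\theta)\bigl(\theta^{\beta+1}-\alpha_2^{\beta+1}\bigr)_+}{u}\,dx ,
$$
with $F(\theta)=\int_{\alpha_2}^{\theta}\bigl(s^{\beta+1}-\alpha_2^{\beta+1}\bigr)_+\,ds\ge 0$. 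Its right-hand side is controlled by means of the elementary interpolation bound $\max_{[0,1]}\theta^{\beta+2}\le C+C\int_0^1\theta^\beta\theta_x^2\,dx$ --- which follows from $\|\theta^{(\beta+2)/2}\|_{L^\infty}\le(\min_{[0,1]}\theta)^{(\beta+2)/2}+\tfrac{\beta+2}{2}\int_0^1\theta^{\beta/2}|\theta_x|\,dx$ and $\min_{[0,1]}\theta\le\bar{\theta}\le\alpha_2$ --- the splitting $\int_0^1\theta^\beta\theta_x^2\,dx\le\int_0^1\theta^{2\beta}\theta_x^2\,1_{(\theta>\alpha_2)}\,dx+C\int_0^1\theta_x^2/\theta\,dx$, and \eqref{7487} in the form $\int_0^T\!\!\int\theta^\beta\theta_x^2\le C+C\int_0^T\|g\|_{L^2}^4\,dt$. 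Combining a large multiple of this temperature estimate with the $g$-estimate and exploiting the $L^1(0,T)$ bounds $\int_0^T\|g\|_{L^2}^2\,dt\le C$ and $\int_0^T\!\!\int\theta_x^2/\theta\le C$ from \eqref{7412} and \eqref{541}, one arrives at a closed differential inequality; the delicate point is that the cross term $C\|g\|_{L^2}^2\int\theta^{2\beta}\theta_x^2$ can be absorbed into the dissipation only once $\|g\|_{L^2}^2$ has been shown to be moderately small, so this final step must be run as a continuity argument in $T$. It yields $\sup_{[0,T]}\int g^2\,dx\le C$ and $\int_0^T\!\!\int\theta^{2\beta}\theta_x^2\le C$, which, substituted into the two displayed inequalities and combined with $M^{-1}\le u\le M$ and $w_{xx}+u_x-\theta_x=g_x$, give \eqref{009}. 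As a byproduct \eqref{7487} then holds with a constant independent of $T$. The cut-off at the level $\alpha_2$ and the interpolation control of $\max\theta$ are exactly where the argument parts company with the constant-conductivity treatments in \cite{15} and \cite{2202}.
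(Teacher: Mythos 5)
Your overall strategy coincides with the paper's: the identity you derive by testing the $g$-equation with $g$ (where $g=w_x+u-\theta$) is, after integration by parts, exactly the paper's identity obtained by multiplying \eqref{002} by $(w_x+u-\theta)_x$; the estimates of the three right-hand terms are the same; and the reduction to a bound on $\int_0^T\!\!\int\theta^{2\beta}\theta_x^2$, obtained from a cut-off temperature multiplier at level $\alpha_2$ combined with $\int_0^T\|g\|_{L^2}^2dt\le C$ from \eqref{7412}, is also the paper's route (the paper uses the multiplier $(\theta^{(\beta+2)/2}-\alpha_2^{(\beta+2)/2})_+\theta^{\beta/2}$ rather than $(\theta^{\beta+1}-\alpha_2^{\beta+1})_+$, an inessential difference).

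However, your closing step has a genuine gap. By estimating $\int_0^1 g^2(\theta^{\beta+1}-\alpha_2^{\beta+1})_+u^{-1}dx$ through the interpolation bound $\max_{[0,1]}\theta^{\beta+2}\le C+C\int_0^1\theta^\beta\theta_x^2dx$ you produce the cross term $C\|g\|_{L^2}^2\int_0^1\theta^{2\beta}\theta_x^2\,1_{(\theta>\alpha_2)}dx$, which, as you note, can only be absorbed into the dissipation $(\beta+1)\int_0^1\theta^{2\beta}\theta_x^2\,1_{(\theta>\alpha_2)}u^{-1}dx$ if $\|g(t)\|_{L^2}^2$ is below a fixed small threshold. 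The continuity-in-$T$ argument you propose cannot be initialized: this lemma is proved for large $H^1$ data, so $\|g(0)\|_{L^2}^2=\|w_{0,x}+u_0-\theta_0\|_{L^2}^2$ need not be small, and the smallness of $\|g\|_{L^2}$ for large times is only established (in Section 3) as a consequence of the present lemma. Gronwall does not rescue the cross term either, since the factor $\int_0^1\theta^{2\beta}\theta_x^2dx$ is a dissipation density, not the quantity under the time derivative. The fix is the one the paper uses: bound $\max_{[0,1]}(\theta^{\beta+1}-\alpha_2^{\beta+1})_+\le(\beta+1)\bigl(\int_0^1\theta^{2\beta}\theta_x^2\,1_{(\theta>\alpha_2)}dx\bigr)^{1/2}$ and apply Young's inequality directly, so that the offending term becomes $\tfrac{\beta+1}{4}\int_0^1\theta^{2\beta}\theta_x^2\,1_{(\theta>\alpha_2)}u^{-1}dx+C\bigl(\int_0^1 g^2dx\bigr)^2$; the quadratic term $\bigl(\int_0^1 g^2dx\bigr)^2$ is then harmless, because $\int_0^T\!\!\int_0^1 g^2dxdt\le C$ makes the combined inequality closable by Gronwall without any smallness assumption. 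With that replacement the rest of your argument goes through.
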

\begin{proof}
Multiplying \eqref{002} by $(w_{x}+u-\theta)_{x}$, and integrating over $[0,1]$, by \eqref{001}-\eqref{11212} except \eqref{002}, we have
\begin{equation}
\begin{split}
&\int_{0}^{1}\frac{(w_{xx}+u_x-\theta_x)^{2}}{u}dx\\
=&-\int_{0}^{1}(w_{x}+u-\theta)_{t}(w_{x}+u-\theta)dx+\int_{0}^{1}(u-\theta)_{t}(w_{x}+u-\theta)dx\\
&-\int_{0}^{1}w_{x}(w_{x}+u-\theta)dx+\int_{0}^{1}\frac{(w_{x}+u-\theta)(w_{x}+u-\theta)_{x}u_{x}}{u^{2}}dx\\
=&-\frac{1}{2}\frac{d}{dt}\int_{0}^{1}(w_{x}+u-\theta)^{2}dx-\int_{0}^{1}\frac{(w_{x}+u-\theta)^{3}}{u}dx-\int_{0}^{1}\frac{\theta(w_{x}+u-\theta)^{2}}{u}dx\\
&+\int_{0}^{1}\frac{\theta^{\beta}\theta_{x}(w_{x}+u-\theta)_{x}}{u}dx+\int_{0}^{1}\frac{(w_{x}+u-\theta)(w_{x}+u-\theta)_{x}u_{x}}{u^{2}}dx.
\end{split}
\end{equation}
Hence,
\begin{equation}\label{wxuth2}
\begin{split}
&\frac{1}{2}\frac{d}{dt}\int_{0}^{1}(w_{x}+u-\theta)^{2}dx+\int_{0}^{1}\left(\frac{\theta(w_{x}+u-\theta)^{2}}{u} +\frac{(w_{xx}+u_x-\theta_x)^{2}}{u}\right)dx\\
=&-\int_{0}^{1}\frac{(w_{x}+u-\theta)^{3}}{u}dx+\int_{0}^{1}\frac{(w_{x}+u-\theta)(w_{x}+u-\theta)_{x}u_{x}}{u^{2}}dx\\
&+\int_{0}^{1}\frac{\theta^{\beta}\theta_{x}(w_{x}+u-\theta)_{x}}{u}dx\\
=& I_{1}+I_{2}+I_{3}.
\end{split}
\end{equation}
A straightforward calculation together with \eqref{008} and  \eqref{541} shows that
\begin{equation}
\begin{split}
I_{1}&\leq C\max_{x\in [0,1]}|w_{x}+u-\theta|\int_{0}^{1}(w_{x}+u-\theta)^{2}dx\\
&\leq C\max_{x\in [0,1]}(w_{x}+u-\theta)^{2}+C\left(\int_{0}^{1}(w_{x}+u-\theta)^{2}dx \right)^{2}\\
&\leq \frac{1}{6}\int_{0}^{1}\frac{(w_{xx}+u_x-\theta_x)^{2}}{u}dx+C\int_{0}^{1}(w_{x}+u-\theta)^{2}dx+C\left(\int_{0}^{1}(w_{x}+u-\theta)^{2}dx \right)^{2},
\end{split}
\end{equation}
\begin{equation}
\begin{split}
I_{2}&\leq \frac{1}{12}\int_{0}^{1}\frac{(w_{xx}+u_x-\theta_x)^{2}}{u}dx+C\int_{0}^{1}(w_{x}+u-\theta)^{2}u_{x}^{2}dx\\
&\leq \frac{1}{12}\int_{0}^{1}\frac{(w_{xx}+u_x-\theta_x)^{2}}{u}dx+C\max_{x\in [0,1]}(w_{x}+u-\theta)^{2}\\
&\leq \frac{1}{6}\int_{0}^{1}\frac{(w_{xx}+u_x-\theta_x)^{2}}{u}dx+C\int_{0}^{1}(w_{x}+u-\theta)^{2}dx,\\
I_{3}&\leq \frac{1}{6}\int_{0}^{1}\frac{(w_{xx}+u_x-\theta_x)^{2}}{u}dx+C\int_{0}^{1}(\theta^{\beta}\theta_{x})^{2}dx.
\end{split}
\end{equation}
Thus, by \eqref{wxuth2}, we have
\begin{equation}
\begin{split}
&\frac{d}{dt}\int_{0}^{1}(w_{x}+u-\theta)^{2}dx+\int_{0}^{1}\left(\frac{(w_{xx}+u_x-\theta_x)^{2}}{u}+\frac{\theta (w_{x}+u-\theta)^{2}}{u} \right)dx\\
&\leq C\int_{0}^{1}(w_{x}+u-\theta)^{2}dx+C\left(\int_{0}^{1}(w_{x}+u-\theta)^{2}dx \right)^{2}+C\int_{0}^{1}(\theta^{\beta}\theta_{x})^{2}dx.\label{7781}
\end{split}
\end{equation}
Now we have to estimate $\int_{0}^{1}(\theta^{\beta}\theta_{x})^{2}dx$. Multiplying \eqref{003} by $(\theta^{(\beta+2)/2}-\alpha_{2}^{(\beta+2)/2})_{+}\theta^{\beta/2}$, and integrating over $[0,1]$, we get
\begin{equation}\label{th2bthx}
\begin{split}
&\frac{1}{\beta+2}\frac{d}{dt}\int_{0}^{1}(\theta^{(\beta+2)/2}-\alpha_{2}^{(\beta+2)/2})_{+}^{2}dx
+(\beta+1)\int_{0}^{1}\frac{\theta^{2\beta}\theta_{x}^{2}}{u}1_{(\theta>\alpha_{2})}dx\\
&=\int_{0}^{1}\frac{(w_{x}+u-\theta)^{2}}{u}(\theta^{(\beta+2)/2}-\alpha_{2}^{(\beta+2)/2})_{+}\theta^{\beta/2}dx+
\frac{\beta}{2}\alpha_{2}^{\frac{\beta+2}{2}}\int_{0}^{1}\frac{\theta^{\frac{3\beta}{2}}\theta_{x}^{2}}{u\theta}1_{(\theta>\alpha_{2})}dx\\
&\quad +\int_{0}^{1}\frac{(\theta^{(\beta+2)/2}-\alpha_{2}^{(\beta+2)/2})_{+}\theta^{\frac{\beta+2}{2}}(w_{x}+u-\theta)}{u}dx\\
&=J_{1}+J_{2}+J_{3}.
\end{split}
\end{equation}
On the other hand, by \eqref{008} and Young's inequality, we check that
\begin{equation*}
\begin{split}
J_{1}&\leq \int_{0}^{1}(w_{x}+u-\theta)^{2}(\theta^{\beta+1}-\alpha_{2}^{\beta+1})_{+}dx\\
&\leq \max_{x\in [0,1]}(\theta^{\beta+1}-\alpha_{2}^{\beta+1})_{+}\int_{0}^{1}(w_{x}+u-\theta)^{2}dx\\
&\leq \frac{\beta +1}{4}\int_{0}^{1}\frac{\theta^{2\beta}\theta_{x}^{2}}{u}1_{(\theta>\alpha_{2})}dx+C\left(\int_{0}^{1}(w_{x}+u-\theta)^{2}dx \right)^{2},\\
J_{2}&\leq \frac{\beta+1}{4}\int_{0}^{1}\frac{\theta^{2\beta}\theta_{x}^{2}}{u}1_{(\theta>\alpha_{2})}dx+
C\int_{0}^{1}\frac{\theta^{\beta}\theta_{x}^{2}}{u\theta^{2}}dx,\\
J_{3}&\leq C\max_{x\in [0,1]}\left(\theta^{\frac{\beta+2}{2}}-\alpha_{2}^{\frac{\beta+2}{2}} \right)_{+}
\int_{0}^{1}\theta^{\frac{\beta+2}{2}}|w_{x}+u-\theta|1_{(\theta>\alpha_{2})}dx\\
&\leq C\max_{x\in [0,1]}\left(\theta^{\frac{\beta+2}{2}}-\alpha_{2}^{\frac{\beta+2}{2}} \right)_{+}^{2}+
C\left(\int_{0}^{1}\theta^{\frac{\beta+2}{2}}|w_{x}+u-\theta|1_{(\theta>\alpha_{2})}dx\right)^{2}\\
&\leq C\int_{0}^{1}\theta^{\beta}\theta_{x}^{2}dx+C\left(\int_{0}^{1}|w_{x}+u-\theta|dx \right)^{2}\\
&\quad +C\left(\int_{0}^{1}\left(\theta^{\frac{\beta+2}{2}}-\alpha_{2}^{\frac{\beta+2}{2}}\right)_{+}|w_{x}+u-\theta|1_{(\theta>\alpha_{2})}dx\right)^{2}\\
&\leq C\int_{0}^{1}\theta^{\beta}\theta_{x}^{2}dx+C\int_{0}^{1}(w_{x}+u-\theta)^{2}dx\\
&\quad +C\int_{0}^{1}\left(\theta^{\frac{\beta+2}{2}}-\alpha_{2}^{\frac{\beta+2}{2}}\right)_{+}^{2}dx\int_{0}^{1}(w_{x}+u-\theta)^{2}dx.
\end{split}
\end{equation*}
Consequently, it follows from \eqref{th2bthx} that
\begin{equation}
\begin{split}
&\frac{2}{\beta+2}\frac{d}{dt}\int_{0}^{1}\left(\theta^{\frac{\beta+2}{2}}-\alpha_{2}^{\frac{\beta+2}{2}}\right)_{+}^{2}dx+(\beta+1)
\int_{0}^{1}\frac{\theta^{2\beta}\theta_{x}^{2}}{u}1_{(\theta>\alpha_{2})}dx\\
&\leq C\int_{0}^{1}\theta^{\beta}\theta_{x}^{2}dx+C\int_{0}^{1}\left(\theta^{\frac{\beta+2}{2}}-\alpha_{2}^{\frac{\beta+2}{2}}\right)_{+}^{2}dx
\int_{0}^{1}(w_{x}+u-\theta)^{2}dx\\
&\quad +C\int_{0}^{1}(w_{x}+u-\theta)^{2}dx+C\left(\int_{0}^{1}(w_{x}+u-\theta)^{2}dx\right)^{2},
\end{split}
\end{equation}
which, by Gronwall's inequality, together with \eqref{008}, \eqref{7487} and \eqref{7412} indicates
\begin{equation}
\begin{split}
&\int_{0}^{1}\left(\theta^{\frac{\beta+2}{2}}-\alpha_{2}^{\frac{\beta+2}{2}}\right)_{+}^{2}dx+
\int_{0}^{t}\int_{0}^{1}\theta^{2\beta}\theta_{x}^{2}1_{(\theta>\alpha_{2})}dxds\\
&\leq C+C\int_{0}^{t}\left(\int_{0}^{1}(w_{x}+u-\theta)^{2}dx \right)^{2}ds.
\end{split}
\end{equation}
Thus
\begin{gather}
\int_{0}^{1}\theta^{\beta+2}dx+
\int_{0}^{t}\int_{0}^{1}\theta^{2\beta}\theta_{x}^{2}dxds\leq C+C\int_{0}^{t}\left(\int_{0}^{1}(w_{x}+u-\theta)^{2}dx \right)^{2}ds.\label{1424}
\end{gather}
Integrating \eqref{7781} over $[0,t]$, by \eqref{7412} and \eqref{1424}, we find that
\begin{equation}
\begin{split}
&\int_{0}^{1}(w_{x}+u-\theta)^{2}dx+\int_{0}^{t}\int_{0}^{1}\Bigl( (w_{x}+u-\theta)_{x}^{2}+\theta(w_{x}+u-\theta)^{2} \Bigl) dxds\\
&\leq C+C\int_{0}^{t}\biggl( \int_{0}^{1}(w_{x}+u-\theta)^{2}dx \biggl)^{2}ds.\\
\end{split}
\end{equation}
Then, by Gronwall's inequality, we establish \eqref{009}. Furthermore, by \eqref{1424} and \eqref{7412},
\begin{gather}
\int_{0}^{1}\theta^{\beta+2}dx+
\int_{0}^{T}\int_{0}^{1}\theta^{2\beta}\theta_{x}^{2}dxds\leq C.\label{14241}
\end{gather}
\end{proof}

\begin{lemma}
There exist  positive constants $C$ and $N$ depending only on initial data and $\beta$ such that for any $T>0$,
\begin{gather}
N^{-1}\leq\theta\leq N, \label{007}
\end{gather}
and
\begin{gather}
\int_{0}^{1}\theta_{x}^{2}dx+\int_{0}^{T}\int_{0}^{1}\left(\theta_{t}^{2}+\theta_{x}^{2}+\theta_{xx}^{2}\right)dxdt\leq C. \label{010}
\end{gather}
\end{lemma}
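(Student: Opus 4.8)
The plan is to establish the two-sided bound on $\theta$ first, since the decay estimate \eqref{010} will follow by standard energy arithmetic once $\theta$ is trapped between two positive constants. The lower bound $\theta \geq N^{-1}$ is already essentially in hand: Lemma \ref{thbelow} gives a uniform positive lower bound for all time. For the upper bound, I would exploit the estimates now available: \eqref{009} tells us $\int_0^T (\int_0^1 (w_x+u-\theta)^2 dx)^2\,dt \leq C$ (because $\theta \geq \widehat N^{-1}$ turns the weighted term $\int \theta(w_x+u-\theta)^2$ into a genuine control on $\int (w_x+u-\theta)^2$, and its square is integrable by a further use of \eqref{009} combined with the max-bound $\max(w_x+u-\theta)^2 \leq C\int(w_{xx}+u_x-\theta_x)^2 + C\int(w_x+u-\theta)^2$). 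Plugging this back into \eqref{7487} and \eqref{14241} then yields $\int_0^T\int_0^1 \theta^\beta\theta_x^2\,dxdt \leq C$ and $\sup_t\int_0^1\theta^{\beta+2}\,dx \leq C$. From $\int_0^1\theta^{\beta+2}dx \leq C$ and $\int_0^1\theta^{2\beta}\theta_x^2 dx$ being time-integrable we can, via $\theta^{(\beta+2)/... }$-type interpolation, bound $\max_{x}\theta$ pointwise in time: write $\max_x \theta^{\beta+1} - \int_0^1\theta^{\beta+1}dx \leq C\int_0^1 \theta^\beta|\theta_x|dx \leq C(\int \theta^{2\beta}\theta_x^2)^{1/2}$ — but this is only $L^2$ in $t$, not $L^\infty$. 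So the genuinely careful argument must be a differential-inequality/Gronwall one.

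Concretely, for the upper bound I would multiply \eqref{003} by $(\theta - \alpha_2)_+$ once more but now track the time-integral of $\int_0^1 \theta^\beta\theta_x^2 1_{(\theta>\alpha_2)}dx$, which \eqref{1424} together with the newly-proved $\int_0^T(\int(w_x+u-\theta)^2)^2\,dt \leq C$ shows is bounded. Combined with the embedding $\max_x(\theta-\alpha_2)_+^2 \leq C\int_0^1 \theta^{-1}\theta_x^2\,dx \cdot \int_0^1\theta\,dx$ which is $L^1$ in time by \eqref{202} and \eqref{88}, one gets $\int_0^T \max_x(\theta-\alpha_2)_+^2\,dt \leq C$. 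To upgrade this time-integrated bound to a pointwise-in-time bound, I would set up an ODE for $y(t) \triangleq \int_0^1 (\theta-\alpha_2)_+^2\,dx$ (or a higher power), differentiate, use the equation and Young/Cauchy-Schwarz to obtain $y' \leq g(t)(1+y)$ with $\int_0^T g\,dt \leq C$, and conclude $y(t) \leq C$ uniformly; then the embedding $\max_x(\theta-\alpha_2)_+^2 \leq C\int_0^1\theta^{-1}\theta_x^2 dx + C y(t)$ is not yet pointwise either, so I would instead close a differential inequality directly on $\sup_x\theta$ via the representation of $\theta$ along characteristics, or — more robustly — use the equation for $\theta^{(\beta+2)/2}$ and the uniform bound on $\int_0^1 u_x^2 dx$ from \eqref{541} to run a De Giorgi / $L^p$-iteration letting the power tend to infinity, analogous to the proof of Lemma \ref{thbelow}. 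This $L^p$-route seems cleanest: multiply \eqref{003} by $(\theta - \alpha_2)_+^{p-1}$, integrate, use that the diffusion term produces $\geq c p \int (\theta-\alpha_2)_+^{p-2}\theta^\beta\theta_x^2$, bound the production terms $\int (w_x+u-\theta)^2(\theta-\alpha_2)_+^{p-1}$ using $\max_x(w_x+u-\theta)^2 \leq CV\text{-type controls}$ and the already-established $L^1_t$ bounds, obtain $\frac{d}{dt}\|(\theta-\alpha_2)_+\|_{p}^p \leq C p\, h(t)\,\|(\theta-\alpha_2)_+\|_p^p + (\text{lower order})$ with $\int_0^T h\,dt \leq C$ independent of $p$, and Gronwall then gives $\|(\theta-\alpha_2)_+\|_{L^p} \leq C$ uniformly in $p$ and $t$; letting $p\to\infty$ gives $\theta \leq N$.

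Once \eqref{007} holds, estimate \eqref{010} is mostly bookkeeping. With $N^{-1}\leq\theta\leq N$ and $\sup_t\int_0^1 u_x^2 \leq C$, the bound $\int_0^T\int_0^1\theta_x^2\,dxdt \leq C$ is immediate from \eqref{202} (or \eqref{541}) since $\theta^{-1}\geq N^{-1}$. For $\int_0^1\theta_x^2\,dx$ pointwise and $\int_0^T\int_0^1\theta_{xx}^2$, I would multiply \eqref{003} by $-\theta_{xx}$, integrate over $[0,1]$ using $\theta_x(0,t)=\theta_x(1,t)=0$; the principal term gives $\frac{d}{dt}\int_0^1\theta_x^2 + c\int_0^1\theta_{xx}^2$, the lower-order terms $\int\frac{\theta^\beta\theta_x^2}{u}\theta_{xx}$, $\int\frac{(w_x+u-\theta)^2}{u}\theta_{xx}$, etc., are handled by Young's inequality absorbing $\epsilon\int\theta_{xx}^2$ and leaving quantities already known to be integrable in time by \eqref{541}, \eqref{009}, and \eqref{14241} (using $\theta$ bounded to convert weighted norms to plain ones, and $\max_x\theta_x^2 \leq C\int\theta_x^2 + C(\int\theta_x^2)^{1/2}(\int\theta_{xx}^2)^{1/2}$ to control the cubic terms). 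Then $\int_0^1\theta_t^2\,dxdt \leq C$ follows directly from \eqref{003}: $\theta_t$ is a sum of $\frac{(w_x+u-\theta)^2}{u}$, $\frac{(w_x+u-\theta)\theta}{u}$ and $(\frac{\theta^\beta\theta_x}{u})_x = \frac{\theta^\beta\theta_{xx}}{u} + \beta\frac{\theta^{\beta-1}\theta_x^2}{u} - \frac{\theta^\beta\theta_x u_x}{u^2}$, and every piece is in $L^2((0,T)\times(0,1))$ by the preceding bounds together with $\theta, u$ bounded above and below. The main obstacle is the upper bound on $\theta$: unlike the lower bound (Lemma \ref{thbelow}) where the nonlinearity cooperates, here the production term $(w_x+u-\theta)^2/u$ can in principle drive $\theta$ up, so the whole scheme hinges on first squeezing $\int_0^T(\int_0^1(w_x+u-\theta)^2dx)^2dt \leq C$ out of \eqref{009}, and then running the $L^p$-iteration with constants uniform in $p$ — the degeneracy $\theta^\beta$ in the diffusion is exactly what makes the $p$-dependence of the good term strong enough to dominate.
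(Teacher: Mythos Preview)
Your overall architecture is right: the lower bound is Lemma~\ref{thbelow}, and once the two-sided bound \eqref{007} is in hand, \eqref{010} is bookkeeping. But the route you choose for the \emph{upper} bound differs from the paper's and, as written, has a gap.

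The paper does not run an $L^p$-iteration. Instead it multiplies \eqref{003} by $\theta^\beta\theta_t$, which produces
\[
\frac{1}{2}\frac{d}{dt}\int_0^1\frac{\theta^{2\beta}\theta_x^2}{u}\,dx
+\int_0^1\Bigl(\frac{\theta^{2\beta+1}\theta_x^2}{2u^2}+\theta^\beta\theta_t^2\Bigr)dx
\le C\Bigl(\int_0^1\theta^{2\beta}\theta_x^2\,dx\Bigr)^2
+ (\text{terms in }L^1_t),
\]
the $L^1_t$ terms being $\max_x(w_x+u-\theta)^2$, $\max_x(w_x+u-\theta)^4$, and $\int\theta^{2\beta}\theta_x^2$, all integrable by \eqref{009} and \eqref{14241}. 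Gronwall then gives $\int_0^1\theta^{2\beta}\theta_x^2\,dx\le C$ \emph{uniformly in $t$}. This is exactly the upgrade you said you needed: the embedding
\(
\max_x|\theta^{\beta+1}-\bar\theta^{\,\beta+1}|\le C\bigl(\int_0^1\theta^{2\beta}\theta_x^2\,dx\bigr)^{1/2}
\)
that you identified and then dismissed as ``only $L^2$ in $t$'' becomes $L^\infty$ in $t$ once the right-hand side is shown to be pointwise bounded. The same multiplier also yields $\int_0^T\!\int_0^1\theta^\beta\theta_t^2\,dx\,dt\le C$ for free, which feeds directly into \eqref{010}; the paper then reads off $\int_0^T\!\int\theta_{xx}^2$ from the equation rather than using your $-\theta_{xx}$ multiplier.

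Your $L^p$-iteration proposal is not obviously wrong, but it is incomplete. In \eqref{003} the source is $(w_x+u-\theta)^2/u + (w_x+u-\theta)\theta/u$, and you only discuss the quadratic piece. Multiplying the linear piece by $(\theta-\alpha_2)_+^{p-1}$ and writing $\theta=(\theta-\alpha_2)_+ +\alpha_2$ on its support produces a term of order $\int(\theta-\alpha_2)_+^{p+1}$ (or, after Cauchy, a term $C\int(\theta-\alpha_2)_+^{p}$ with $C$ \emph{not} in $L^1_t$). Neither fits into your claimed form $\frac{d}{dt}y_p\le Cp\,h(t)\,y_p$ with $\int_0^\infty h<\infty$; you would have to absorb it using the diffusion term $(p-1)\int\theta^\beta\theta_x^2(\theta-\alpha_2)_+^{p-2}/u$ via a Poincar\'e/Sobolev interpolation, and the $p$-dependence there is delicate (the good term scales like $c/p$ in front of $\int(\theta-\alpha_2)_+^p$ after Poincar\'e, while the bad term does not). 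It may close, but not by the argument you sketched. The paper's $\theta^\beta\theta_t$ multiplier sidesteps all of this.
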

\begin{proof}
Multiplying \eqref{003} by $\theta^{\beta}\theta_{t}$ and integrating over $[0,1]$, by Young's inequality, we get
\begin{equation}
\begin{split}
&\frac{1}{2}\frac{d}{dt}\int_{0}^{1}\frac{\theta^{2\beta}\theta_{x}^{2}}{u}dx+
\int_{0}^{1}\left(\frac{\theta^{2\beta+1}\theta_{x}^{2}}{2u^{2}}+ \theta^{\beta}\theta_{t}^{2}\right) dx\\
&=-\frac{1}{2}\int_{0}^{1}\frac{\theta^{2\beta}\theta_{x}^{2}}{u^{2}}(w_{x}+u-\theta)dx+
\frac{1}{2}\int_{0}^{1}\frac{\theta^{2\beta}\theta_{x}^{2}}{u}dx\\
&\quad +\int_{0}^{1}\frac{(w_{x}+u-\theta)^{2}\theta^{\beta}\theta_{t}}{u}dx+\int_{0}^{1}\frac{(w_{x}+u-\theta)\theta^{\beta+1}\theta_{t}}{u}dx\\
&\leq C\max_{x\in [0,1]}(w_{x}+u-\theta)^{2}+C\left(\int_{0}^{1}\theta^{2\beta}\theta_{x}^{2}dx  \right)^{2}+C\int_{0}^{1}\theta^{2\beta}\theta_{x}^{2}dx\\
&\quad +\frac{1}{2}\int_{0}^{1}\theta^{\beta}\theta_{t}^{2}dx+C\max_{x\in [0,1]}(w_{x}+u-\theta)^{4},\label{41}
\end{split}
\end{equation}
where we have used \eqref{008}, \eqref{009} and \eqref{1424}.
On the other hand, one can see that
\begin{equation}
\begin{split}
\int_{0}^{T}\max_{x\in [0,1]}(w_{x}+u-\theta)^{2}dt\leq C\int_{0}^{T}\int_{0}^{1} \biggl ((w_{x}+u-\theta)^{2}+(w_{x}+u-\theta)_{x}^{2}\biggl)dxdt\leq C,\label{54}
\end{split}	
\end{equation}
and
\begin{equation}
\begin{split}
&\int_{0}^{T}\max_{x\in [0,1]}(w_{x}+u-\theta)^{4}dt\\
&\leq C\int_{0}^{T}\int_{0}^{1}(w_{x}+u-\theta)^{4}dxdt+C\int_{0}^{T}\int_{0}^{1}|w_{x}+u-\theta|^{3}|(w_{x}+u-\theta)_{x}|dxdt\\
&\leq C\int_{0}^{T}\max_{x\in [0,1]}(w_{x}+u-\theta)^{2}dt\\
&\quad +C\int_{0}^{T}\max_{x\in [0,1]}(w_{x}+u-\theta)^{2}\int_{0}^{1}|w_{x}+u-\theta||(w_{x}+u-\theta)_{x}|dxdt\\
&\leq C+\frac{1}{2}\int_{0}^{T}\max_{x\in [0,1]}(w_{x}+u-\theta)^{4}dt+C\int_{0}^{T}\int_{0}^{1}(w_{x}+u-\theta)^{2}dx\int_{0}^{1}(w_{x}+u-\theta)_{x}^{2}dxdt\\
&\leq C+\frac{1}{2}\int_{0}^{T}\max_{x\in [0,1]}(w_{x}+u-\theta)^{4}dt,
\end{split}
\end{equation}
which implies that
\begin{gather}
\int_{0}^{T}\max_{x\in [0,1]}(w_{x}+u-\theta)^{4}dt\leq C.\label{56}
\end{gather}
By Gronwall's inequality, along with \eqref{008},\eqref{563}, \eqref{54} and \eqref{56}, we deduce from \eqref{14241} that
\begin{equation}\label{the2bthx2}
\int_{0}^{1}\theta^{2\beta}\theta_{x}^{2}dx+\int_{0}^{T}\int_{0}^{1}\left(\theta^{2\beta+1}\theta_{x}^{2}+ \theta^{\beta}\theta_{t}^{2}\right) dxdt\leq C.
\end{equation}
Noticing that
\begin{equation}
\begin{split}
&\max_{x\in[0,1]}\left|\theta^{\beta+1}-\bar{\theta}^{\beta+1}\right|\leq (\beta+1)\int_0^1 \theta^\beta |\theta_x|dx\\
&\leq C\left(\int_0^1 (\theta^\beta \theta_x)^2dx\right)^{1/2}\\
&\leq C,\label{0101}
\end{split}
\end{equation}
as a result, combining this with \eqref{563}, we establish \eqref{007}.

Finally, one can rewrite \eqref{003} as
\begin{gather}
\theta_{t}=\frac{(w_{x}+u-\theta)^{2}}{u}+\frac{(w_{x}+u-\theta)\theta}{u}+\frac{\theta^{\beta}\theta_{xx}}{u}+\frac{\beta\theta^{\beta-1}\theta_{x}^{2}}{u}
-\frac{\theta^{\beta}\theta_{x}u_{x}}{u^{2}},
\end{gather}
and then it follows from  \eqref{7412}, \eqref{007}, \eqref{56} and \eqref{the2bthx2} that
\begin{equation}
\begin{split}
\int_{0}^{T}\int_{0}^{1}\theta_{xx}^{2}dxdt&\leq C\int_{0}^{T}\int_{0}^{1}\Bigl(\theta_{t}^{2}+(w_{x}+u-\theta)^{4}+(w_{x}+u-\theta)^{2}
+\theta_{x}^{4}+\theta_{x}^{2}u_{x}^{2} \Bigr)dxdt\\
&\leq C+C\int_{0}^{T}\int_{0}^{1}(\theta_{x}^{4}+\theta_{x}^{2}u_{x}^{2})dxdt\\
&\leq C+C\int_{0}^{T}\max_{x\in [0,1]}\theta_{x}^{2}dt\\
%&\leq C+C\int_{0}^{T}\int_{0}^{1}\theta_{x}^{2}dxdt+\frac{1}{2}\int_{0}^{T}\int_{0}^{1}\theta_{xx}^{2}dxdt\\
&\leq C+\frac{1}{2}\int_{0}^{T}\int_{0}^{1}\theta_{xx}^{2}dxdt,
\end{split}
\end{equation}
which together with \eqref{the2bthx2} and \eqref{007} gives \eqref{010}.
\end{proof}

\section{Stability of the solutions}
In this section, we will focus on the stability of the solution.
\begin{lemma}\label{0001}
We have
\begin{gather}
\lim_{t\to \infty}\|w\|_{L^{2}[0,1]}=0.\label{11}
\end{gather}

\end{lemma}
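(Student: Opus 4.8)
The plan is to exploit the energy-type identity already available together with the uniform bounds proved in Section~2. Recall from Lemma~\ref{2.1} that $\int_0^T V(s)\,ds\le e_0$, and from \eqref{009} that $\int_0^T\int_0^1\theta(w_x+u-\theta)^2\,dx\,dt\le C$; since by \eqref{007} we have $\theta\ge N^{-1}$, this gives in particular
\begin{gather}
\int_0^\infty\int_0^1 (w_x+u-\theta)^2\,dx\,dt\le C. \notag
\end{gather}
First I would test \eqref{002} against $w$ itself and integrate over $[0,1]$. Using \eqref{11212}, integration by parts, and the identity $w_x=v_x-u$ from \eqref{wvxu}, one obtains a differential inequality of the form
\begin{gather}
\frac{1}{2}\frac{d}{dt}\int_0^1 w^2\,dx+\int_0^1 w^2\,dx+\int_0^1\frac{w_x^2}{u}\,dx=\int_0^1\frac{\theta w_x}{u}\,dx-\ (\text{boundary terms}), \notag
\end{gather}
where the boundary contribution coming from the inhomogeneous condition $\left(-\frac{\theta}{u}+\frac{w_x}{u}\right)(0,t)=\left(-\frac{\theta}{u}+\frac{w_x}{u}\right)(1,t)=-1$ must be tracked carefully — this is where the cut-off/stress-free structure differs from the constant-conductivity case of \cite{15}.

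The key observation is that the right-hand side can be rewritten in terms of the quantity $w_x+u-\theta$, whose space-time $L^2$ norm is integrable in time. Indeed $\frac{w_x}{u}-\frac{\theta}{u}=\frac{w_x+u-\theta}{u}-1$, so after absorbing the good term $\int_0^1 u^{-1}w_x^2\,dx$ and using $M^{-1}\le u\le M$, $N^{-1}\le\theta\le N$ from \eqref{008} and \eqref{007}, one arrives at
\begin{gather}
\frac{d}{dt}\int_0^1 w^2\,dx+c\int_0^1 w^2\,dx\le C\int_0^1(w_x+u-\theta)^2\,dx+C\int_0^1(\theta-\bar\theta)^2\,dx+\cdots, \notag
\end{gather}
for some $c>0$, where the remaining terms are likewise controlled by $\int_0^1(w_x+u-\theta)^2\,dx$ plus $\int_0^1\theta^{-1}\theta_x^2\,dx$ (the latter bounding $\max_x(\theta-\bar\theta)^2$, hence $\int_0^1(\theta-\bar\theta)^2\,dx$), and both of these are integrable over $(0,\infty)$ by Lemma~\ref{2.1} and \eqref{009}. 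Denote the right-hand side by $g(t)$, so $\int_0^\infty g(t)\,dt\le C$.

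Then $y(t)\triangleq\int_0^1 w^2\,dx$ satisfies $y'+cy\le g$ with $g\in L^1(0,\infty)$, whence by Gronwall $y(t)\le y(0)e^{-ct}+\int_0^t e^{-c(t-s)}g(s)\,ds$. Splitting the Duhamel integral at $t/2$ and using that $\int_{t/2}^\infty g\to 0$ and $e^{-ct/2}\to 0$ shows $y(t)\to 0$ as $t\to\infty$, which is exactly \eqref{11}. The main obstacle I anticipate is the precise bookkeeping of the inhomogeneous boundary terms generated when integrating \eqref{002} by parts against $w$: because the stress-free condition contributes the value $-1$ at both endpoints (rather than $0$), one must verify that these terms either cancel by virtue of $\int_0^1 w\,dx=0$ (see \eqref{w0}) combined with $w_x=v_x-u$, or are themselves expressible through $w$ evaluated at the endpoints and then absorbed using a trace/interpolation inequality $\|w\|_{L^\infty}^2\le\varepsilon\|w_x\|_{L^2}^2+C_\varepsilon\|w\|_{L^2}^2$ together with $\int_0^1 u^{-1}w_x^2\,dx$ on the left. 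Once the differential inequality is established in the clean form above, the convergence is routine.
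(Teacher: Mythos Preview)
Your overall strategy---test \eqref{002} against $w$, derive a differential inequality $y'+cy\le g$ with $g$ integrable in time in an appropriate sense, then run a Duhamel/splitting argument---is exactly what the paper does. However, the ``main obstacle'' you flag is not an obstacle at all, and your two suggested workarounds either miss the point or would not close the estimate.

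The boundary contribution vanishes exactly, with no need for $\int_0^1 w\,dx=0$ or any trace inequality. Since $\left(-\tfrac{\theta}{u}+\tfrac{w_x}{u}\right)_x=\left(\tfrac{w_x+u-\theta}{u}\right)_x$ (the constant $-1$ has zero derivative) and the boundary condition \eqref{1212} says $\tfrac{w_x+u-\theta}{u}=0$ at $x=0,1$, integration by parts against $w$ leaves no boundary term at all. One obtains the clean identity
\[
\frac{1}{2}\frac{d}{dt}\int_0^1 w^2\,dx+\int_0^1 w^2\,dx=-\int_0^1\frac{(w_x+u-\theta)w_x}{u}\,dx,
\]
which is precisely \eqref{w2a1} in the paper. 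Note in particular that the term $\int_0^1 u^{-1}w_x^2\,dx$ you plan to ``absorb'' cancels exactly on both sides; there is nothing left to absorb. Your trace-inequality alternative would introduce an additive constant on the right-hand side (from Young's inequality applied to $|w(1)-w(0)|$), which would prevent concluding $y(t)\to 0$.

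From the clean identity, the paper bounds the right-hand side directly by Cauchy--Schwarz and the uniform bound $\|w_x\|_{L^2}\le C$ (a consequence of \eqref{009}, \eqref{008}, \eqref{007}), obtaining
\[
\frac{d}{dt}\int_0^1 w^2\,dx+2\int_0^1 w^2\,dx\le C\left(\int_0^1(w_x+u-\theta)^2\,dx\right)^{1/2}.
\]
The right-hand side lies in $L^2(0,\infty)$ rather than $L^1(0,\infty)$, but your splitting-at-$t/2$ Duhamel argument handles this just as well (one extra Cauchy--Schwarz in $s$ on the tail integral). Your proposed intermediate form with $\int(\theta-\bar\theta)^2$ and unspecified ``$\cdots$'' is not needed and, in fact, is not obviously achievable: bounding $-\int u^{-1}(w_x+u-\theta)w_x$ by an $L^1$-in-time quantity would require control of $\int_0^\infty\|w_x\|_{L^2}^2\,dt$, which is not among the a~priori estimates of Section~2.
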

\begin{proof}
By \eqref{008}, \eqref{009} and \eqref{007},
\begin{gather}
\int_{0}^{1}w_{x}^{2}dx\leq C.
\end{gather}
Multiplying \eqref{002} by $w$ and integrating over $[0,1]$, we have
\begin{equation}\label{w2a1}
\begin{split}
&\frac{1}{2}\frac{d}{dt}\int_{0}^{1}w^{2}dx+\int_{0}^{1}w^{2}dx=-\int_{0}^{1}\frac{(w_{x}+u-\theta)w_{x}}{u}dx\\
&\leq C\left(\int_{0}^{1}(w_{x}+u-\theta)^{2} dx\right)^{\frac{1}{2}}\left(\int_{0}^{1}w_{x}^{2}dx \right)^{\frac{1}{2}}\\
&\leq C\left(\int_{0}^{1}(w_{x}+u-\theta)^{2} dx\right)^{\frac{1}{2}}.
\end{split}
\end{equation}
Hence, by \eqref{009} and Young's inequality, we conclude that

\begin{gather}
\begin{split}
&\int_{0}^{1}w^{2}dx \leq Ce^{-t}\int_{0}^{t}e^{s}\left(\int_{0}^{1}(w_{x}+u-\theta)^{2} dx\right)^{\frac{1}{2}}ds \\
&\leq Ce^{-t}\int_{0}^{\frac{t}{2}}e^{s}\left(\int_{0}^{1}(w_{x}+u-\theta)^{2} dx\right)^{\frac{1}{2}}ds+
Ce^{-t}\int_{\frac{t}{2}}^{t}e^{s}\left(\int_{0}^{1}(w_{x}+u-\theta)^{2} dx\right)^{\frac{1}{2}}ds \\
&\leq Ce^{-\frac{t}{2}}+C\left(\int_{\frac{t}{2}}^{t}\int_{0}^{1}(w_{x}+u-\theta)^{2}dxds \right)^{\frac{1}{2}}, \\
\end{split}
\end{gather}
which, together with \eqref{009}, gives \eqref{11}.
\end{proof}

\begin{lemma}\label{0002}
It holds that
\begin{gather}
\lim_{t\to \infty}u(x,t)=\lim_{t \to \infty}\int_{0}^{1}\theta dx=A,\label{45}
\end{gather}
uniformly in $[0,1]$, where $A$ is given by \eqref{A0}.
\end{lemma}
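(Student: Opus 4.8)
The plan is to prove \eqref{45} by a compactness argument: using the uniform-in-time bounds of Section 2, I will first show that $u$ and $\theta$ become spatially constant and that their means $\bar u(t)\triangleq\int_{0}^{1}u\,dx$ and $\bar\theta(t)\triangleq\int_{0}^{1}\theta\,dx$ differ by $o(1)$ as $t\to\infty$, and then identify the common limit from the conserved energy \eqref{66}. Write $c_{0}\triangleq\int_{0}^{1}v_{0}\,dx$, so that \eqref{A0} reads $A=2\sqrt{36+3(2E_{0}-c_{0}^{2})}-12$. The first task is to establish, as $t\to\infty$: (a) $\|w(\cdot,t)\|_{C[0,1]}\to0$; (b) $\int_{0}^{1}\theta_{x}^{2}\,dx\to0$; (c) $\int_{0}^{1}u_{x}^{2}\,dx\to0$; (d) $\int_{0}^{1}(w_{x}+u-\theta)^{2}\,dx\to0$; (e) $\bar u(t)-\bar\theta(t)\to0$.

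I would use repeatedly the elementary fact that if $f\ge0$, $\int_{0}^{\infty}f\,dt<\infty$ and $f'\le g$ with $\int_{0}^{\infty}g^{+}\,dt<\infty$, then $f(t)\to0$ (since $f(t)+\int_{t}^{\infty}g^{+}\,ds$ is nonnegative and nonincreasing, hence convergent, with limit $0$ because $f\in L^{1}$). For (b): as $\theta_{x}(0,t)=\theta_{x}(1,t)=0$ by \eqref{11212}, integration by parts gives $\frac{d}{dt}\int_{0}^{1}\theta_{x}^{2}\,dx=-2\int_{0}^{1}\theta_{xx}\theta_{t}\,dx\le\int_{0}^{1}\bigl(\theta_{xx}^{2}+\theta_{t}^{2}\bigr)\,dx$, and \eqref{010} supplies the $L^{1}(0,\infty)$ bound on the right-hand side and $\int_{0}^{\infty}\int_{0}^{1}\theta_{x}^{2}\,dx\,dt<\infty$. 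For (d): take $f=\int_{0}^{1}(w_{x}+u-\theta)^{2}\,dx$, use \eqref{7781} for the differential inequality, and check via \eqref{7412}, \eqref{14241}, \eqref{009} that its right-hand side lies in $L^{1}(0,\infty)$ and $\int_{0}^{\infty}\int_{0}^{1}(w_{x}+u-\theta)^{2}\,dx\,dt<\infty$. For (c): by (b), \eqref{007} and \eqref{11} the right-hand side $C\int_{0}^{1}(\theta_{x}^{2}/\theta+\theta w^{2})\,dx$ of \eqref{24} tends to $0$, so the Gronwall inequality \eqref{24} forces $\int_{0}^{1}(w-u_{x}/u)^{2}\,dx\to0$, whence $\int_{0}^{1}u_{x}^{2}\,dx\le2M^{2}\bigl(\int_{0}^{1}(w-u_{x}/u)^{2}\,dx+\int_{0}^{1}w^{2}\,dx\bigr)\to0$ by \eqref{008} and \eqref{11}. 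For (a): interpolate $\|w\|_{C[0,1]}^{2}\le2\|w\|_{L^{2}}\bigl(\|w\|_{L^{2}}+\|w_{x}\|_{L^{2}}\bigr)$ and use \eqref{11} together with $\int_{0}^{1}w_{x}^{2}\,dx\le C$ (from $w_{x}=(w_{x}+u-\theta)-u+\theta$ and \eqref{009}, \eqref{008}, \eqref{007}). Finally (e): by the fundamental theorem of calculus $\int_{0}^{1}(w_{x}+u-\theta)\,dx=\bigl(w(1,t)-w(0,t)\bigr)+\bar u(t)-\bar\theta(t)$, so $\bar u(t)-\bar\theta(t)=\int_{0}^{1}(w_{x}+u-\theta)\,dx-\bigl(w(1,t)-w(0,t)\bigr)\to0$, the first term by (d) and the Cauchy--Schwarz inequality, the second by (a).

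With (a)--(e) in hand, let $t_{n}\to\infty$ be arbitrary. Since $\{u(\cdot,t)\}$ and $\{\theta(\cdot,t)\}$ are bounded in $H^{1}(0,1)$ by \eqref{008}, \eqref{007}, \eqref{541}, \eqref{010}, a subsequence (not relabeled) satisfies $u(\cdot,t_{n})\to u_{\infty}$ and $\theta(\cdot,t_{n})\to\theta_{\infty}$ in $C[0,1]$, with $u_{x}(\cdot,t_{n})\rightharpoonup(u_{\infty})_{x}$ and $\theta_{x}(\cdot,t_{n})\rightharpoonup(\theta_{\infty})_{x}$ weakly in $L^{2}$; by (b), (c) and weak lower semicontinuity of the $L^{2}$-norm, $u_{\infty}\equiv a$ and $\theta_{\infty}\equiv b$ are constants with $M^{-1}\le a\le M$, $N^{-1}\le b\le N$, and $a=\lim\bar u(t_{n})=\lim\bar\theta(t_{n})=b$ by (e). By \eqref{004} and (a), $v(\cdot,t_{n})=w(\cdot,t_{n})+c_{0}+\int_{0}^{x}u(\xi,t_{n})\,d\xi-\int_{0}^{1}\int_{0}^{\xi}u(\eta,t_{n})\,d\eta\,d\xi\to c_{0}+a(x-\frac{1}{2})$ in $C[0,1]$. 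Letting $n\to\infty$ in the energy identity \eqref{66} at time $t_{n}$ gives $\frac{1}{2}\int_{0}^{1}\bigl(c_{0}+a(x-\frac{1}{2})\bigr)^{2}dx+a=E_{0}$, i.e. (using $\int_{0}^{1}(x-\frac{1}{2})\,dx=0$) $\frac{a^{2}}{24}+a+\frac{c_{0}^{2}}{2}=E_{0}$, whose unique positive root is $a=2\sqrt{36+3(2E_{0}-c_{0}^{2})}-12=A$; it is positive because $2E_{0}-c_{0}^{2}=\bigl(\int_{0}^{1}v_{0}^{2}\,dx-c_{0}^{2}\bigr)+2\int_{0}^{1}\theta_{0}\,dx>0$ by Cauchy--Schwarz and $\theta_{0}>0$. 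As $A$ is the same for every such subsequence, $u(\cdot,t)\to A$ in $C[0,1]$ as $t\to\infty$, hence uniformly on $[0,1]$, and $\int_{0}^{1}\theta\,dx\to A$, which is \eqref{45}.

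The main obstacle is that one cannot prove convergence of $\bar u$ or $\bar\theta$ by a naive $L^{1}(0,\infty)$ bound on its time derivative, since the available control is only by $\bigl(\int_{0}^{1}(w_{x}+u-\theta)^{2}dx\bigr)^{1/2}$, which is merely square-integrable in time. The way around this is to prove only that the \emph{difference} $\bar u-\bar\theta$ vanishes — which, as above, reduces to the elementary identity for $\int_{0}^{1}(w_{x}+u-\theta)\,dx$ together with (a) and (d) — and then to pin the common value of the limits from the conserved energy \eqref{66} evaluated along an arbitrary diverging sequence of times, rather than from an ODE for the mean. The remaining technical points are minor: the absolute continuity in $t$ of $\int_{0}^{1}\theta_{x}^{2}dx$ and of $\int_{0}^{1}(w_{x}+u-\theta)^{2}dx$ (routine for strong solutions of the regularity \eqref{klmk}, e.g. by mollification), and the weak-lower-semicontinuity step identifying $u_{\infty}$ and $\theta_{\infty}$ as constants.
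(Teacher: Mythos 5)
Your proof is correct, and it takes a genuinely different route from the paper's. The paper solves the linear ODE obtained by integrating \eqref{002} over $[0,x]$ to get an explicit representation of $u$ as $e^{-t}\int_0^t e^s\bar\theta(s)\,ds$ plus four error terms, each shown to be $o(1)$; it then proves the auxiliary facts \eqref{3}--\eqref{5} (in particular the weighted time-average identity \eqref{5}, obtained by multiplying \eqref{001} by $e^t\bar u$, and the relation $\bar\theta=u+o(1)$ from combining the two energy-type identities \eqref{89} and \eqref{99}) and finally extracts the pointwise quadratic equation \eqref{155} for $u$. You instead bypass the representation formula entirely: you first establish the decay of $\int_0^1 u_x^2\,dx$, $\int_0^1\theta_x^2\,dx$ and $\int_0^1(w_x+u-\theta)^2\,dx$ directly from the Section 2 differential inequalities \eqref{24}, \eqref{010}, \eqref{7781} together with the $L^1$-in-time bounds \eqref{7412}, \eqref{14241} (these decay statements are essentially the content of the paper's later Lemma \ref{0004}, but your derivations use only Section 2 and Lemma \ref{0001}, so there is no circularity), obtain $\bar u-\bar\theta\to 0$ from the elementary identity $\int_0^1(w_x+u-\theta)\,dx=w(1,t)-w(0,t)+\bar u-\bar\theta$, and identify the common constant limit by passing to the limit in the conserved energy \eqref{66} along subsequences; the sub-subsequence principle then upgrades this to full convergence. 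Your route is arguably cleaner, replacing the delicate manipulations behind \eqref{4} and \eqref{5} by a soft compactness argument and a one-line algebraic identity; what it gives up is the quantitative, representation-formula-based control of $u$ that the paper's method carries along (and which is of the same flavor as the machinery already used to bound $u$ in Lemma 2.2). The root selection ($a>0$ forced by \eqref{008}, and $2E_0-c_0^2>0$ by Cauchy--Schwarz) and the two flagged technical points (absolute continuity of the relevant energy functionals in $t$, and weak lower semicontinuity to conclude the limits are constants) are all handled correctly.
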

\begin{proof}
Integrating \eqref{002} over $[0,x]$, and using \eqref{1212}, we obtain
\begin{gather}
\frac{\partial}{\partial t}\int_{0}^{x}w(\xi,t)d\xi+\int_{0}^{x}w(\xi,t)d\xi=\frac{w_{x}+u-\theta}{u},\label{17}
\end{gather}
which, by \eqref{001}, can be rewritten as
\begin{gather}
u_t+\left(1-\frac{\partial}{\partial t}\int_{0}^{x}w(\xi,t)d\xi\right)u=\theta+u\int_{0}^{x}w(\xi,t)d\xi.\label{171}
\end{gather}
Hence,
\begin{equation}
\begin{split}
u(x&,t)=e^{-t}u_{0}(x) \exp\biggl \{\int_{0}^{x}\bigl(w(\xi,t)-w_{0}(\xi)\bigr)d\xi \biggr\}\\
&\quad +e^{-t}\int_{0}^{t}e^{s}\exp\biggl\{\int_{0}^{x}\bigl(w(\xi,t)-w(\xi,s)\bigr)d\xi \biggr\}\biggl(\theta(x,s)+u(x,s)\int_{0}^{x}w(\xi,s)d\xi \biggr)ds\\
&=e^{-t}u_{0}(x) \exp\biggl \{\int_{0}^{x}\bigl(w(\xi,t)-w_{0}(\xi)\bigr)d\xi \biggr\}\\
&\quad +e^{-t}\int_{0}^{t}e^{s}\left(\exp\biggl\{\int_{0}^{x}\bigl(w(\xi,t)-w(\xi,s)\bigr)d\xi \biggr\}-1\right)\theta(x,s)ds\\
&\quad +e^{-t}\int_{0}^{t}e^{s}\exp\biggl\{\int_{0}^{x}\bigl(w(\xi,t)-w(\xi,s)\bigr)d\xi \biggr\}\biggl(u(x,s)\int_{0}^{x}w(\xi,s)d\xi \biggr)ds\\
&\quad +e^{-t}\int_{0}^{t}e^{s}\left(\theta(x,s)-\int_{0}^{1}\theta(x,s)dx \right)ds+e^{-t}\int_{0}^{t}e^{s}\int_{0}^{1}\theta(x,s)dxds\\
&\triangleq\sum_{i=1}^{4} I_i+e^{-t}\int_{0}^{t}e^{s}\int_{0}^{1}\theta(x,s)dxds.\label{6}
\end{split}
\end{equation}
Now, we will claim that $I_i=o(1)$ for $i=1,2,3,4$, where $o (1)$ denotes the function which converges to zero uniformly in $[0,1]$ as $t\to \infty$.
It is clear that the relation $I_1=o(1)$ is a direct result of \eqref{w2}, and $I_4=o(1)$ holds because of  \eqref{010}.

Noticing that by \eqref{17}, \eqref{008} and \eqref{1212},
\begin{gather}
\frac{\partial}{\partial t}\int_{0}^{x}w(\xi,t)d\xi+\int_{0}^{x}w(\xi,t)d\xi\leq |\int_0^xw_{\xi\xi}+u_\xi-\theta_\xi d\xi|\leq \|w_{xx}+u_x-\theta_x\|_{L^2{[0,1]}},
\end{gather}
and then, similar to what we have done in the proof of Lemma \ref{0001} (see \eqref{w2a1}), we deduce from \eqref{009} that
\begin{gather}
\int_{0}^{x}w(\xi,t)d\xi=o (1),\label{1}
\end{gather}
together with \eqref{w2} and \eqref{007}, yields
\begin{equation}\label{wth1}
\begin{split}
I_2&=e^{-t}\int_{0}^{\frac{t}{2}}e^{s}\left(\exp\biggl\{\int_{0}^{x}\bigl(w(\xi,t)-w(\xi,s)\bigr)d\xi \biggr\}-1\right)\theta(x,s)ds\\
&\quad +e^{-t}\int_{\frac{t}{2}}^{t}e^{s}\left(\exp\biggl\{\int_{0}^{x}\bigl(w(\xi,t)-w(\xi,s)\bigr)d\xi \biggr\}-1\right)\theta(x,s)ds\\
&=o(1), \,\,\, \text{as}\,\,\, t\rightarrow \infty.
\end{split}
\end{equation}
Similarly, it follows from \eqref{w2}, \eqref{008} and \eqref{1} that $I_3=o(1)$.

As a result, we have
\begin{equation}
u(x,t)=e^{-t}\int_{0}^{t}e^{s}\int_{0}^{1}\theta(x,s)dxds+o(1).\label{7021}
\end{equation}

The proof will be completed if we show the following facts:
\begin{gather}
\overline{u}\triangleq \int_{0}^{1}u(x,t)dx=u(x,t)+o (1),\label{3}
\end{gather}
\begin{gather}
\int_{0}^{1}\theta(x,t)dx=u(x,t)+o(1),\label{4}
\end{gather}
\begin{gather}
e^{-t}\int_{0}^{t}e^{s}\left(\int_{0}^{1}u(x,s)dx \right)^{2}ds-u^2(x,t)=o(1).\label{5}
\end{gather}
Indeed, by \eqref{004}, we get
\begin{equation}\label{vwe1}
\begin{split}
v(x,t)=&w(x,t)+\int_{0}^{1}v_{0}(\xi)d\xi +\int_{0}^{x}u(\xi,t)d\xi -\int_{0}^{1} \int_{0}^{\xi}u(\eta,t)d\eta d\xi\\
=&w(x,t)+\int_{0}^{1}v_{0}(\xi)d\xi+(x-\frac{1}{2})\int_{0}^{1}u(\xi,t)d\xi+o(1),
\end{split}
\end{equation}
where we have utilize the following fact
\begin{equation*}
x\int_{0}^{1}u(\xi,t)d\xi-\int_{0}^{x}u(\xi,t)d\xi=o(1),
\end{equation*}
due to \eqref{3}.

Now by virtue of \eqref{7021},  \eqref{E0}, \eqref{vwe1}, \eqref{11} and \eqref{5}, we arrive at
\begin{equation}
\begin{split}
u(x,t)=&E_{0}-\frac{1}{2}e^{-t}\int_{0}^{t}e^{s}\int_{0}^{1}v^{2}(x,s)dxds+o(1)\\
=&E_{0}-\frac{1}{2}\left(\int_{0}^{1}v_{0}(x)dx \right)^{2}-\frac{e^{-t}}{24}\int_{0}^{t}e^{s}\left(\int_{0}^{1}u(x,s)dx \right)^{2}ds+o(1)\\
=&E_{0}-\frac{1}{2}\left(\int_{0}^{1}v_{0}(x)dx \right)^{2}-\frac{1}{24}u^{2}(x,t)+o(1),\label{155}
\end{split}
\end{equation}
which, together with \eqref{4} gives \eqref{45}.

It remains to prove \eqref{3}-\eqref{5}. It is clear that \eqref{3} is a direct result of \eqref{7021}.

Multiplying \eqref{002} by $w$, adding it to \eqref{003} and integrating over $[0,1]$, together with \eqref{001}, we have
\begin{equation}
\begin{split}
&\frac{d}{dt} \int_{0}^{1}\left(\frac{1}{2}w^{2}+\theta \right)dx+\int_{0}^{1}(w^{2}+\theta)dx\\
&=\frac{d}{dt}\int_{0}^{1}udx+\int_{0}^{1}udx.\label{89}
\end{split}
\end{equation}
Next, multiplying \eqref{17} by $u$ and integrating over $[0,1]$ show that
\begin{equation}
\begin{split}
&\frac{d}{dt}\int_{0}^{1}u\int_{0}^{x}wd\xi dx+\int_{0}^{1}u\int_{0}^{x}wd\xi dx+\int_{0}^{1}(w^{2}+\theta)dx\\
&=\frac{d}{dt}\int_{0}^{1}udx+\int_{0}^{1}udx.\label{99}
\end{split}
\end{equation}
Combining \eqref{89} and \eqref{99} lead to
\begin{gather}
\frac{d}{dt}\int_{0}^{1}\left(\frac{1}{2}w^{2}+\theta \right)dx=\frac{d}{dt}\int_{0}^{1}u\int_{0}^{x}wd\xi dx+\int_{0}^{1}u\int_{0}^{x}wd\xi dx.
\end{gather}
Consequently,
\begin{equation}
\begin{split}
\int_{0}^{1}\left(\frac{1}{2}w^{2}+\theta \right)dx&=\int_{0}^{1}u\int_{0}^{x}wd\xi dx+e^{-t}\int_{0}^{t}e^{s}\int_{0}^{1}\left(\frac{1}{2}w^{2}+\theta \right)dxdt+o(1),\\
%&=e^{-t}\int_{0}^{t}e^{s}\theta dxds+o(1)\\
\end{split}
\end{equation}
which, together with \eqref{008}, \eqref{11}, \eqref{1} and \eqref{7021} gives \eqref{4}.

Finally, multiplying \eqref{001} by $e^t\int_{0}^{1}u(x,t)dx$ , and integrating over $[0,1]\times [0,t]$, we get
\begin{equation}
\begin{split}
e^{-t}&\int_{0}^{t}e^{s}\left(\int_{0}^{1}u(x,s)dx \right)^{2}ds-\left(\int_{0}^{1}u(x,t)dx \right)^{2} \\
&=e^{-t}\left(\int_{0}^{1}u_{0}(x)dx \right)^{2}-2e^{-t}\int_{0}^{t}e^{s}\int_{0}^{1}u(x,s)dx\int_{0}^{1}w_{x}(x,s)dxds\\
&\triangleq J_1+J_2.
\end{split}
\end{equation}
Clearly, $J_1=o(1)$ as $t\to \infty$. For $J_2$, by \eqref{008},
\begin{equation}
\begin{split}
&|J_2|=\left|e^{-t}\int_{0}^{t}e^{s}\int_{0}^{1}u(x,s)dx\int_{0}^{1}w_{x}(x,s)dxds \right|\\
&\leq Ce^{-t}\int_{0}^{t}e^{s}\int_{0}^{1}\left(\left|w_{x}+u-\theta \right|+\left|u-\int_{0}^{1}udx \right|+\left|\theta-\int_{0}^{1}\theta dx \right| \right)dxds\\
&\quad +Ce^{-t}\int_{0}^{t}e^{s}\left|\int_{0}^{1}\left(\theta-u \right)dx \right|ds\\
&\leq Ce^{-t}\int_{0}^{t}e^{s}\left(\int_{0}^{1}(w_{x}+u-\theta)^{2}+\theta_x^2 dx\right)^{\frac{1}{2}}ds+Ce^{-t}\int_{0}^{t}e^{s}\left|\int_{0}^{1}\left(\theta-u \right)dx \right|ds,
\end{split}
\end{equation}
 which, together with \eqref{009}, \eqref{010}, \eqref{3} and \eqref{4}, yields $J_2=o(1)$. Therefore, along with \eqref{3}, we obtain \eqref{5} and complete the proof.
\end{proof}

\begin{lemma}\label{0003}
We have
\begin{gather}
\lim_{t \to \infty} \theta = A,\label{55}
\end{gather}
uniformly in $[0,1]$.
\end{lemma}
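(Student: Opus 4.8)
The plan is to deduce the uniform convergence $\theta\to A$ from two ingredients: the convergence $\bar\theta(t):=\int_0^1\theta(x,t)\,dx\to A$ already proved in Lemma~\ref{0002}, and the decay $\|\theta_x(\cdot,t)\|_{L^2[0,1]}\to 0$. The reduction is immediate, since
\[
\max_{x\in[0,1]}\bigl|\theta(x,t)-\bar\theta(t)\bigr|\le\int_0^1|\theta_x|\,dx\le\|\theta_x(\cdot,t)\|_{L^2[0,1]},
\]
so it remains only to show $\int_0^1\theta_x^2(x,t)\,dx\to 0$ as $t\to\infty$.

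To obtain this, I would differentiate the Dirichlet energy of $\theta$: multiplying \eqref{003} by $-\theta_{xx}$ and integrating over $[0,1]$, the boundary conditions $\theta_x(0,t)=\theta_x(1,t)=0$ make the boundary terms vanish and produce on the left $\frac12\frac{d}{dt}\int_0^1\theta_x^2\,dx+\int_0^1\frac{\theta^\beta\theta_{xx}^2}{u}\,dx$, while the right-hand side is a sum of terms of the types $\int\frac{(w_x+u-\theta)^2\theta_{xx}}{u}$, $\int\frac{(w_x+u-\theta)\theta\theta_{xx}}{u}$, $\int\frac{\beta\theta^{\beta-1}\theta_x^2\theta_{xx}}{u}$ and $\int\frac{\theta^\beta\theta_x u_x\theta_{xx}}{u^2}$ (this formal computation being justified rigorously by a standard mollification argument). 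Using the two-sided bounds $N^{-1}\le\theta\le N$ from \eqref{007}, $M^{-1}\le u\le M$ from \eqref{008}, the bound $\int_0^1u_x^2\,dx\le C$ from \eqref{541}, the elementary inequality $\|\theta_x\|_{L^\infty[0,1]}^2\le 2\|\theta_x\|_{L^2}\|\theta_{xx}\|_{L^2}$ (valid because $\theta_x$ vanishes at both endpoints), and Young's inequality, each term on the right is split: part absorbed into $\int_0^1\theta_{xx}^2\,dx$ and part estimated by a quantity with finite time-integral on $[0,\infty)$. Concretely, $\int_0^1\theta_x^4\,dx\le C\|\theta_x\|_{L^2}^3\|\theta_{xx}\|_{L^2}$ and $\int_0^1\theta_x^2u_x^2\,dx\le C\|\theta_x\|_{L^2}\|\theta_{xx}\|_{L^2}$ (using the uniform bound on $\|\theta_x\|_{L^2}$ from \eqref{010}), while $\int_0^1(w_x+u-\theta)^4\,dx\le\max_x(w_x+u-\theta)^4$ and $\int_0^1(w_x+u-\theta)^2\,dx$ are controlled by \eqref{56}, \eqref{54} and \eqref{009}. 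This yields a differential inequality
\[
\frac{d}{dt}\int_0^1\theta_x^2\,dx+c\int_0^1\theta_{xx}^2\,dx\le h(t),\qquad \int_0^\infty h(t)\,dt\le C,
\]
with $h(t)=C\int_0^1(w_x+u-\theta)^2\,dx+C\max_x(w_x+u-\theta)^4+C\int_0^1\theta_x^2\,dx$ and the bound on $\int_0^\infty h$ coming from \eqref{009}, \eqref{56} and \eqref{010}.

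Setting $f(t):=\int_0^1\theta_x^2(x,t)\,dx\ge0$, we have $\int_0^\infty f(t)\,dt\le C$ by \eqref{010}, and $f'(t)\le h(t)$ with $h\in L^1(0,\infty)$. Then $f(t)\to0$: since $\int_0^\infty f<\infty$ forces $\liminf_{t\to\infty}f(t)=0$, choose $s_n\to\infty$ with $f(s_n)\to0$; for every $t\ge s_n$, $f(t)\le f(s_n)+\int_{s_n}^\infty h(s)\,ds$, and the right side tends to $0$ as $n\to\infty$. Hence $\max_{x\in[0,1]}|\theta(x,t)-\bar\theta(t)|\to0$, which together with $\bar\theta(t)\to A$ (Lemma~\ref{0002}) gives $\theta(x,t)\to A$ uniformly in $[0,1]$.

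I expect the main obstacle to be the bookkeeping in the energy inequality of the second step: one must verify that every error term coming from the nonlinearities $(w_x+u-\theta)$-source, $\beta\theta^{\beta-1}\theta_x^2$, $\theta^\beta\theta_x u_x/u^2$ is either absorbed into the $\theta_{xx}^2$ dissipation or lands in an $L^1$-in-time remainder, and this hinges precisely on feeding the uniform-in-time $L^2$ bound for $\theta_x$ (from \eqref{010}) into the interpolation $\|\theta_x\|_{L^\infty}^2\le 2\|\theta_x\|_{L^2}\|\theta_{xx}\|_{L^2}$, together with the space-time estimates \eqref{54}, \eqref{56} and \eqref{009} on $w_x+u-\theta$.
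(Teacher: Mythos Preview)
Your argument is correct. The energy identity obtained by testing \eqref{003} with $-\theta_{xx}$ is exactly as you describe, and with the two-sided bounds \eqref{007}, \eqref{008} in hand every right-hand term can indeed be split as you indicate: the key points are $\int\theta_x^4\le C\|\theta_x\|_{L^2}^3\|\theta_{xx}\|_{L^2}\le \varepsilon\int\theta_{xx}^2+C(\varepsilon)\int\theta_x^2$ and $\int\theta_x^2u_x^2\le C\|\theta_x\|_{L^2}\|\theta_{xx}\|_{L^2}\le \varepsilon\int\theta_{xx}^2+C(\varepsilon)\int\theta_x^2$, both using the uniform $L^2$ bound on $\theta_x$ from \eqref{010}. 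The resulting $h$ is nonnegative and $L^1$ by \eqref{009}, \eqref{56}, \eqref{010}, so the real-variable lemma ($f\ge0$, $f\in L^1$, $f'\le h\in L^1_+$ $\Rightarrow$ $f\to0$) applies.

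The paper takes a shorter route that avoids redoing a second-order energy estimate. It works with $g(t)=\int_0^1(\theta^{\beta+1}-\bar\theta^{\beta+1})^2\,dx$: from \eqref{010} and \eqref{007} one reads off directly that $g\in L^1(0,\infty)$ and $|g'|\in L^1(0,\infty)$, hence $g(t)\to0$. Then the already-established uniform bound $\int_0^1(\theta^\beta\theta_x)^2\,dx\le C$ (from \eqref{the2bthx2}) gives $\max_x(\theta^{\beta+1}-\bar\theta^{\beta+1})^2\le Cg^{1/2}\to0$, and one concludes via \eqref{45}. The advantage of the paper's approach is economy: it only combines bounds already on the shelf, whereas your approach essentially reproves (a variant of) the differential inequality \eqref{41} from Lemma~2.7. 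Your approach, on the other hand, yields $\int_0^1\theta_x^2\,dx\to0$ directly, which the paper postpones to Lemma~\ref{0004}.
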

\begin{proof}
It follows from \eqref{010}  and \eqref{007} that
\begin{equation}
\begin{split}
&\int_{0}^{T}\int_{0}^{1}\left(\theta^{\beta+1}-\bar{\theta}^{\beta+1} \right)^{2}dxdt\\
&\leq C\int_{0}^{T}\left(\int_{0}^{1}\theta^{\beta}|\theta_{x}|dx \right)^{2}dt\\
&\leq C\int_{0}^{T}\int_{0}^{1}\theta_{x}^{2}dxdt \leq C,
\end{split}
\end{equation}
and
\begin{equation}
\begin{split}
&\int_{0}^{T}\left|\frac{d}{dt}\int_{0}^{1}(\theta^{\beta+1}-\bar{\theta}^{\beta+1})^{2}dx \right|dt\\
&\leq C\int_{0}^{T}\int_{0}^{1}(\theta^{\beta+1}-\bar{\theta}^{\beta+1})^{2}dxdt+C\int_{0}^{T}\int_{0}^{1}(\theta^{\beta}\theta_{t}^{2}+\bar{\theta}_{t}^{2})dxdt\\
&\leq C.
\end{split}
\end{equation}
Consequently,
\begin{gather}
\lim_{t\to \infty}\int_{0}^{1}(\theta^{\beta+1}-\bar{\theta}^{\beta+1})^{2}dx=0.\label{69}
\end{gather}
On the other hand, by \eqref{007} and \eqref{010} again , we have
\begin{equation}
\begin{split}
&\max_{x\in [0,1]}(\theta^{\beta+1}-\bar{\theta}^{\beta+1})^{2}\\
&\leq C\int_{0}^{1}\left|\theta^{\beta+1}-\bar{\theta}^{\beta+1} \right|\left|(\theta^{\beta+1})_{x} \right|dx\\
&\leq C\left(\int_{0}^{1}(\theta^{\beta+1}-\bar{\theta}^{\beta+1})^{2}dx \right)^{\frac{1}{2}}\left(\int_{0}^{1}(\theta^{\beta}\theta_{x})^{2}dx \right)^{\frac{1}{2}}\\&\leq C\left(\int_{0}^{1}(\theta^{\beta+1}-\bar{\theta}^{\beta+1})^{2}dx \right)^{\frac{1}{2}},
\end{split}
\end{equation}
which together with \eqref{69} and \eqref{45} gives \eqref{55}.
\end{proof}

\begin{lemma}\label{0004}
It holds that
\begin{gather}\label{uxwxthx20}
\int_{0}^{1}\bigl(u_{x}^{2}+w_{x}^{2}+\theta_{x}^{2}\bigr)dx=o(1),
\end{gather}
and \begin{gather}
\lim_{t \to \infty} w = 0 ,\label{wt0}
\end{gather}
uniformly in $[0,1]$.
\begin{proof}
We deduce from \eqref{24} and \eqref{007} that
\begin{gather}
\int_{0}^{1}\left(w-\frac{u_{x}}{u} \right)^{2}dx\leq Ce^{-t}\int_{0}^{t}e^{s}\int_{0}^{1}\left(\theta_{x}^{2}+ w^{2} \right)dxds,
\end{gather}
which, along with \eqref{11} \eqref{008} and  \eqref{010}, implies that
\begin{gather}\label{ux2o}
\int_{0}^{1}u_{x}^{2}=o(1).
\end{gather}
Next, introducing
\begin{gather*}z(x,t)=w(x,t)+\int_{0}^{x}\Bigl(u(\xi,t)-\theta(\xi,t)\Bigl)d\xi,\end{gather*}
and by \eqref{001}-\eqref{11212}, one can check that $z(x,t)$ satisfies
\begin{gather}
z_{t}+z=\left(\frac{z_{x}}{u} \right)_{x}-\int_{0}^{x}\frac{z_{\xi}w_{\xi}}{u}d\xi-\frac{\theta^{\beta}\theta_{x}}{u},\label{7454}
\end{gather}
and the boundary condition
\begin{gather}
z_{x}(0,t)=z_{x}(1,t)=0.
\end{gather}
Multiplying \eqref{7454} by $z_{xx}$ and integrating over $[0,1]$, together with \eqref{008}, \eqref{007} and \eqref{w2}, we get
\begin{equation}
\begin{split}
&\frac{1}{2}\frac{d}{dt}\int_{0}^{1}z_{x}^{2}dx+\int_{0}^{1}z_{x}^{2}dx+\int_{0}^{1}\frac{z_{xx}^{2}}{u}dx\\
&=\int_{0}^{1}\frac{z_{x}u_{x}z_{xx}}{u^{2}}dx+\int_{0}^{1}z_{xx}\int_{0}^{x}\frac{z_{\xi}w_{\xi}}{u}d\xi dx+
\int_{0}^{1}\frac{\theta^{\beta}\theta_{x}z_{xx}}{u}dx\\
& =\int_{0}^{1}\frac{(uw+u_{x})z_{x}z_{xx}}{u^{2}}dx-
\int_{0}^{1}z_{xx}\int_{0}^{x}\left(\frac{wz_{\xi\xi}}{u}-\frac{wz_{\xi}u_{\xi}}{u^{2}} \right)d\xi dx+\int_{0}^{1}\frac{\theta^{\beta}\theta_{x}z_{xx}}{u}dx\\
& =\int_{0}^{1}\frac{(uw+u_{x})z_{x}z_{xx}}{u^{2}}dx+
\int_{0}^{1}z_{x}\left(\frac{wz_{xx}}{u}-\frac{wz_{x}u_{x}}{u^{2}} \right)dx+\int_{0}^{1}\frac{\theta^{\beta}\theta_{x}z_{xx}}{u}dx\\
&\leq\frac{1}{4}\int_{0}^{1} \frac{z_{xx}^{2}}{u}dx+
C\int_{0}^{1}\bigl(w^{2}+u_{x}^{2} \bigr)z_{x}^{2}dx+C\int_{0}^{1}\theta_{x}^{2}dx\\
&\leq
\frac{1}{4}\int_{0}^{1}\frac{z_{xx}^{2}}{u}dx+C\int_{0}^{1}(w^{2}+u_{x}^{2} )dx \max_{x\in[0,1]}z_{x}^{2}+C\int_{0}^{1}\theta_{x}^{2}dx\\
&\leq \frac{1}{4}\int_{0}^{1}\frac{z_{xx}^{2}}{u}dx+C\int_{0}^{1}(w^{2}+u_{x}^{2} )dx \left(\int_{0}^{1}z_{x}^{2}dx+\int_{0}^{1}\frac{z_{xx}^{2}}{u}dx\right)
+C\int_{0}^{1}\theta_{x}^{2}dx.\label{651}
\end{split}
\end{equation}
As a result, by \eqref{008}, \eqref{11} and \eqref{ux2o}, there exists a $T_0> 0$ such that for any $t\geq T_0$,
\begin{equation}
\frac{d}{dt}\int_{0}^{1}z_{x}^{2}dx+\int_{0}^{1}z_{x}^{2}dx+\int_{0}^{1}\frac{z_{xx}^{2}}{u}dx
\leq C\int_{0}^{1}\theta_{x}^{2}dx,\label{65}
\end{equation}
which, together \eqref{009} and \eqref{010}, indicates that
\begin{gather}\label{zx2o}
\int_{0}^{1}z_{x}^{2}dx=o(1).
\end{gather}
Observing that $z_x=w_x+u-\theta$, by \eqref{45}, \eqref{55} and \eqref{zx2o}, we obtain
\begin{gather}\label{wx20}
\int_{0}^{1}w_{x}^{2}dx=o(1).
\end{gather}
By \eqref{41}, \eqref{008} and \eqref{007}, we derive
\begin{equation}
\begin{split}
&\frac{d}{dt}\int_{0}^{1}\frac{\theta^{2\beta}\theta_{x}^{2}}{u}dx+\int_{0}^{1}\frac{\theta^{2\beta}\theta_{x}^{2}}{u}dx\\
&\leq C\max_{x\in [0,1]}(w_{x}+u-\theta)^{2}+C\max_{x\in [0,1]}(w_{x}+u-\theta)^{4}+C\int_{0}^{1}\theta_{x}^{2}dx,
\end{split}
\end{equation}
which, together with \eqref{010}, \eqref{54} and \eqref{56} yields
\begin{gather}
\int_{0}^{1}\theta_{x}^{2}dx=o(1).
\end{gather}
Therefore, we establish \eqref{uxwxthx20}. Finally, due to \eqref{w0},
\begin{gather}\label{maxw0}
\max_{x\in[0,1]}|w(x,t)|=\max_{x\in[0,1]}\Bigl|w(x,t)-\int_{0}^{1}w(\xi,t)d\xi\Bigl|\leq \int_{0}^{1}|w_x|dx\leq\Biggl(\int_{0}^{1}w_{x}^{2}dx\Biggl)^{1/2},
\end{gather}
which along with \eqref{wx20} gives \eqref{wt0}. So we finish the proof of Lemma \ref{0004}.
\end{proof}
\end{lemma}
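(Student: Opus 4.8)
\emph{Proof proposal.} The plan is to establish, in this order, $\int_0^1 u_x^2\,dx = o(1)$, then $\int_0^1 (w_x+u-\theta)^2\,dx = o(1)$, then $\int_0^1 w_x^2\,dx = o(1)$ and $\int_0^1\theta_x^2\,dx = o(1)$, and only then the uniform decay $w\to 0$; the ordering matters because each step feeds on the smallness produced by the previous ones. All four integral estimates will follow from one of two decay mechanisms for a nonnegative $y(t)$: if $y'+cy\le g$ with $c>0$ and $\int_0^\infty g\,dt<\infty$, or if $y'\le g$ with $\int_0^\infty y\,dt<\infty$ and $\int_0^\infty g^+\,dt<\infty$, then $y(t)\to 0$ (in the first case write $y(t)=e^{-ct}y(0)+\int_0^t e^{-c(t-s)}g(s)\,ds$ and split the integral at $s=t/2$; in the second use $\liminf_{t\to\infty}y(t)=0$ together with $y(t)\le y(s)+\int_s^\infty g^+\,d\tau$ for a suitable large $s\le t$). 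For $u_x$ I would run the first mechanism on inequality \eqref{24}: by \eqref{007} its right-hand side is dominated by $C\int_0^1(\theta_x^2+w^2)\,dx$, which is integrable in $t$ on $(0,\infty)$ by \eqref{010} and \eqref{w2}, so $\int_0^1(w-u_x/u)^2\,dx=o(1)$; combined with Lemma \ref{0001} and \eqref{008} this gives $\int_0^1 u_x^2\,dx=o(1)$.

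For the combination $w_x+u-\theta$ I would introduce $z(x,t)=w(x,t)+\int_0^x(u(\xi,t)-\theta(\xi,t))\,d\xi$, noting that $z_x=w_x+u-\theta$, that $z_x(0,t)=z_x(1,t)=0$ thanks to \eqref{1212}, and that \eqref{001}--\eqref{003} force $z$ to solve $z_t+z=(z_x/u)_x-\int_0^x(z_\xi w_\xi/u)\,d\xi-\theta^\beta\theta_x/u$. Testing this with $z_{xx}$ and integrating by parts — in $x$ for the local terms, and (the key technical point) in the inner variable $\xi$ for the nonlocal cubic term $\int_0^1 z_{xx}(\int_0^x z_\xi w_\xi/u\,d\xi)\,dx$ — together with \eqref{007}, \eqref{008} and Young's inequality should yield $\tfrac12\tfrac{d}{dt}\int_0^1 z_x^2\,dx+\int_0^1 z_x^2\,dx+\int_0^1 z_{xx}^2/u\,dx\le\tfrac12\int_0^1 z_{xx}^2/u\,dx+C\int_0^1(w^2+u_x^2)z_x^2\,dx+C\int_0^1\theta_x^2\,dx$. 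Estimating $\int_0^1(w^2+u_x^2)z_x^2\,dx\le\max_{x\in[0,1]} z_x^2\cdot\int_0^1(w^2+u_x^2)\,dx\le C(\int_0^1 z_x^2\,dx+\int_0^1 z_{xx}^2/u\,dx)\int_0^1(w^2+u_x^2)\,dx$, and using $\int_0^1(w^2+u_x^2)\,dx=o(1)$ from Lemma \ref{0001} and the previous step, for $t$ past some $T_0$ this term is absorbed, leaving $\tfrac{d}{dt}\int_0^1 z_x^2\,dx+\int_0^1 z_x^2\,dx+\int_0^1 z_{xx}^2/u\,dx\le C\int_0^1\theta_x^2\,dx$; since $\int_0^\infty\int_0^1\theta_x^2\,dx\,dt<\infty$ by \eqref{010}, the first mechanism gives $\int_0^1 z_x^2\,dx=o(1)$. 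Finally $w_x=z_x-(u-A)+(\theta-A)$ together with the uniform limits $u\to A$ (Lemma \ref{0002}) and $\theta\to A$ (Lemma \ref{0003}) yields $\int_0^1 w_x^2\,dx=o(1)$.

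For $\theta_x$ I would return to the differential identity \eqref{41} obtained by testing \eqref{003} with $\theta^\beta\theta_t$; with $\theta$ and $u$ trapped between positive constants by \eqref{007} and \eqref{008} it becomes $\tfrac{d}{dt}\int_0^1\theta^{2\beta}\theta_x^2/u\,dx+\int_0^1\theta^{2\beta}\theta_x^2/u\,dx\le C\max_{x\in[0,1]}(w_x+u-\theta)^2+C\max_{x\in[0,1]}(w_x+u-\theta)^4+C\int_0^1\theta_x^2\,dx$, whose right-hand side has finite integral over $(0,\infty)$ by \eqref{54}, \eqref{56} and \eqref{010}; since $\int_0^\infty\int_0^1\theta^{2\beta}\theta_x^2/u\,dx\,dt<\infty$ as well (again \eqref{010}), the second decay mechanism gives $\int_0^1\theta^{2\beta}\theta_x^2/u\,dx=o(1)$, hence $\int_0^1\theta_x^2\,dx=o(1)$, which completes \eqref{uxwxthx20}. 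For the last assertion, since $\int_0^1 w(x,t)\,dx=0$ by \eqref{w0}, $\max_{x\in[0,1]}|w(x,t)|=\max_{x\in[0,1]}|w(x,t)-\int_0^1 w(\xi,t)\,d\xi|\le(\int_0^1 w_x^2\,dx)^{1/2}=o(1)$, which is \eqref{wt0}.

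The step I expect to be the real obstacle is the $z$-estimate. One must commit to testing with $z_{xx}$ and then handle the nonlocal, cubic-in-gradient term $\int_0^1 z_{xx}(\int_0^x z_\xi w_\xi/u\,d\xi)\,dx$, which only becomes tractable after integrating by parts in $\xi$ (rather than in $x$) so as to shift a derivative off $w_\xi$ and produce the good second-order term $\int_0^1 z_{xx}^2/u\,dx$ plus quadratic remainders of the type $\int_0^1(w^2+u_x^2)z_x^2\,dx$. Absorbing those remainders into $\int_0^1 z_{xx}^2/u\,dx$ requires $\int_0^1(w^2+u_x^2)\,dx$ to be small, which is only guaranteed for large time — so Lemma \ref{0001} and the $u_x$-decay must be available beforehand, and this is exactly why the four estimates have to be proved in the stated sequence.
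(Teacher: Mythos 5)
Your proposal is correct and follows essentially the same route as the paper: the same ordering ($u_x$ first via \eqref{24}, then $z_x$ via testing the $z$-equation with $z_{xx}$ and integrating the nonlocal term by parts in $\xi$, then $w_x$ from $u,\theta\to A$, then $\theta_x$ from \eqref{41}, and finally $\max|w|\le\|w_x\|_{L^2}$ via \eqref{w0}), with the same absorption of $\int_0^1(w^2+u_x^2)z_x^2\,dx$ for large $t$. The only cosmetic difference is that for $\theta_x$ you invoke your ``second'' decay mechanism where the paper uses the Duhamel/splitting one; both apply since the inequality already has the form $y'+y\le g$ with $g$ integrable.
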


\section{Rate of convergence}
In this section, we will further study the decay rate of convergence. More precisely, we have the following conclusion.
\begin{theorem}\label{thm41}
The convergence of $(u,w,\theta)$ to $(A,0,A)$ in $W^{1,2}(0,1)$ decays exponentially, i.e., there exist positive constants $C$, $\lambda$ which depend on
$\beta$ and the initial data such that
\begin{gather}
\int_{0}^{1}\Bigl((u-A)^{2}+w^{2}+(\theta-A)^{2}+u_{x}^{2}+w_{x}^{2}+\theta_{x}^{2} \Bigr)dx\leq Ce^{-\lambda t}.
\end{gather}
\end{theorem}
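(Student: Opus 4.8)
The plan is to assemble the damped energy inequalities already available from Sections 2 and 3 into a single Lyapunov functional $\mathcal{E}(t)$ which, together with the two scalar modes $\bar{u}(t)-A$ and $\bar{\theta}(t)-A$ (here $\bar{f}\triangleq\int_{0}^{1}f\,dx$), is comparable to $\|(u-A,w,\theta-A)\|_{H^{1}(0,1)}^{2}$ and satisfies a differential inequality of the form $\frac{d}{dt}\mathcal{E}+2\lambda\mathcal{E}\leq(\text{controllable remainder})$ on a half-line $[T_{\ast},\infty)$, and then to close the argument by a short decay-rate bootstrap. Throughout one works with the reformulated system \eqref{001}-\eqref{003} and with the quantities $w_{x}+u-\theta=z_{x}$ (notation as in the proof of Lemma \ref{0004}, so that $z_{x}$ vanishes at $x=0,1$ by \eqref{1212}) and $w-u_{x}/u$. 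By Lemmas \ref{0001}-\ref{0004} we already have $\|u-A\|_{L^{\infty}}+\|w\|_{L^{\infty}}+\|\theta-A\|_{L^{\infty}}+\|(u_{x},w_{x},\theta_{x})\|_{L^{2}}=o(1)$, and by Section 2 every relevant quantity is bounded on finite intervals; hence it suffices to fix $T_{\ast}$ (at least as large as the time after which \eqref{65} holds) so large that the perturbation is as small as needed for $t\geq T_{\ast}$, and to prove the asserted bound on $[T_{\ast},\infty)$. Observe that for $t\geq T_{\ast}$ one has $\theta^{\beta}=A^{\beta}+O(\|\theta-A\|_{L^{\infty}})$ uniformly, so the heat conduction is a small perturbation of the constant one; this explains why the large-time behaviour should coincide with the constant-conductivity case treated in \cite{15}.

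First I would control the two scalar modes. Combining the exact energy identity \eqref{66} with the representation \eqref{004} of $v$ and splitting $u=\bar{u}+(u-\bar{u})$, one obtains $\frac{\bar{u}^{2}}{24}+\bar{\theta}=E_{0}-\frac12\bigl(\int_{0}^{1}v_{0}\,dx\bigr)^{2}+R(t)$ with $|R(t)|\leq C\bigl(\|w(t)\|_{L^{2}}+\|u_{x}(t)\|_{L^{2}}\bigr)$, while by \eqref{A0}-\eqref{E0} the constant $A$ satisfies $\frac{A^{2}}{24}+A=E_{0}-\frac12\bigl(\int_{0}^{1}v_{0}\,dx\bigr)^{2}$; since $\bar{u}+A\to 2A>0$, this is one linear relation between $\bar{u}-A$ and $\bar{\theta}-A$ modulo $R$. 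A second relation follows by subtracting \eqref{99} from \eqref{89}: this shows that $\int_{0}^{1}(\tfrac12 w^{2}+\theta)\,dx-\int_{0}^{1}u\int_{0}^{x}w\,d\xi\,dx$ has time derivative $\int_{0}^{1}u\int_{0}^{x}w\,d\xi\,dx$, and integrating it from $t$ onward (using $\|w\|_{L^{\infty}}\leq\|w_{x}\|_{L^{2}}$ and \eqref{008}) bounds $|\bar{\theta}(t)-A|$ by $\|w(t)\|_{L^{2}}$ plus a tail $\int_{t}^{\infty}\|w(s)\|_{L^{2}}\,ds$. Altogether
\[
|\bar{u}(t)-A|^{2}+|\bar{\theta}(t)-A|^{2}\leq C\bigl(\|w(t)\|_{L^{2}}^{2}+\|u_{x}(t)\|_{L^{2}}^{2}\bigr)+C\Bigl(\int_{t}^{\infty}\|w(s)\|_{L^{2}}\,ds\Bigr)^{2},
\]
the tail being harmless once an exponential rate is in hand.

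The core step is the functional. I would take
\[
\mathcal{E}(t)\triangleq\int_{0}^{1}\Bigl(\delta_{1}z_{x}^{2}+\delta_{2}\bigl(w-\tfrac{u_{x}}{u}\bigr)^{2}+\delta_{3}\,\tfrac{\theta^{2\beta}\theta_{x}^{2}}{u}+\delta_{4}\,w^{2}\Bigr)dx
\]
with constants $0<\delta_{4}\ll\delta_{3}\ll\delta_{2}\ll\delta_{1}$ still to be chosen. The four ingredients already obey damped inequalities: \eqref{65} gives $\frac{d}{dt}\int z_{x}^{2}+\int z_{x}^{2}+\int\frac{z_{xx}^{2}}{u}\leq C\int\theta_{x}^{2}$; \eqref{24} gives $\frac{d}{dt}\int(w-\frac{u_{x}}{u})^{2}+\varepsilon_{0}\int(w-\frac{u_{x}}{u})^{2}\leq C\int(\theta_{x}^{2}+\theta w^{2})$; multiplying \eqref{003} by $\theta^{\beta}\theta_{t}$ as in \eqref{41}, and using the dissipation $\int\frac{\theta^{2\beta+1}\theta_{x}^{2}}{2u^{2}}$ together with the two-sided bound \eqref{007} to control the self-interaction term, yields a damped inequality for $\int\frac{\theta^{2\beta}\theta_{x}^{2}}{u}$ with right-hand side bounded by $C\max_{x}z_{x}^{2}+C\max_{x}z_{x}^{4}+C\int z_{x}^{2}$; and multiplying \eqref{002} by $w$ and using \eqref{17} gives $\frac12\frac{d}{dt}\int w^{2}+\int w^{2}+\int\frac{z_{x}^{2}}{u}=\int\frac{z_{x}(u-\theta)}{u}\,dx$. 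For $t\geq T_{\ast}$ every superlinear term carries the small factor from Lemma \ref{0004}; the elementary bound $\max_{x}z_{x}^{2}\leq 2\|z_{x}\|_{L^{2}}\|z_{xx}\|_{L^{2}}$ (valid since $z_{x}$ vanishes on $\partial\Omega$) lets the $z_{xx}$-dissipation of \eqref{65} absorb the $\max$-terms; \eqref{007} converts the degenerate weights, so that $\int\theta_{x}^{2}\leq C\int\frac{\theta^{2\beta}\theta_{x}^{2}}{u}$ and $\int u_{x}^{2}\leq C\int(w-\frac{u_{x}}{u})^{2}+C\int w^{2}$; and the cross term $\int\frac{z_{x}(u-\theta)}{u}$ is split by Young's inequality, with $\int(u-\theta)^{2}$ controlled by $\int u_{x}^{2}+\int\theta_{x}^{2}+|\bar{u}-A|^{2}+|\bar{\theta}-A|^{2}$ and hence, by the previous paragraph, by the dissipative terms plus the tail $\bigl(\int_{t}^{\infty}\|w\|_{L^{2}}\,ds\bigr)^{2}$. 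Choosing $\delta_{1},\dots,\delta_{4}$ hierarchically small so that all these feedbacks become genuine perturbations of the diagonal dissipation—this is where the extra dissipation $\int\frac{z_{xx}^{2}}{u}$ of \eqref{65} and the bonus term $\int\frac{z_{x}^{2}}{u}$ from the $w$-equation are crucial—and then enlarging $T_{\ast}$, one arrives at
\[
\frac{d}{dt}\mathcal{E}(t)+2\lambda\mathcal{E}(t)\leq C\Bigl(\int_{t}^{\infty}\|w(s)\|_{L^{2}}\,ds\Bigr)^{2},\qquad t\geq T_{\ast},
\]
for a suitable $\lambda>0$; and, by the scalar-mode bound above and Poincar\'e's inequality on the mean-subtracted parts, $\mathcal{E}(t)+|\bar{u}(t)-A|^{2}+|\bar{\theta}(t)-A|^{2}$ is comparable to $\|(u-A,w,\theta-A)\|_{H^{1}(0,1)}^{2}$.

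Finally I would close the argument. Since $\mathcal{E}$ is bounded and $\int_{t}^{\infty}\|w\|_{L^{2}}\,ds\to0$, the differential inequality first forces $\mathcal{E}(t)\to0$; then, using $\|w(s)\|_{L^{2}}^{2}\leq C\mathcal{E}(s)$, any provisional rate $\mathcal{E}(s)\leq Ce^{-\mu s}$ turns the remainder into at most $Ce^{-\mu t}$, so one bootstraps to $\mathcal{E}(t)\leq Ce^{-\lambda t}$ (decreasing $\lambda$ once if necessary); combining with the comparison above gives $\|(u-A,w,\theta-A)\|_{H^{1}(0,1)}^{2}\leq Ce^{-\lambda t}$ for $t\geq T_{\ast}$, and on $[0,T_{\ast}]$ the a priori bounds of Section 2 are swallowed by the constant. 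The step I expect to be the main obstacle is precisely this closure in the presence of the degenerate, nonlinear conduction: the conduction dissipation appears only through the weighted quantities $\int\frac{\theta^{\beta}\theta_{x}^{2}}{u\theta^{2}}$, $\int\frac{\theta^{2\beta}\theta_{x}^{2}}{u}$ and $\int\frac{\theta^{2\beta+1}\theta_{x}^{2}}{u^{2}}$, and turning these into coercive control of $\int\theta_{x}^{2}$, dominating the self-interaction term in the $\theta^{\beta}\theta_{t}$-identity, and absorbing the $\max_{x}$-terms all rely on the sharp two-sided bound $N^{-1}\leq\theta\leq N$ of \eqref{007}, which itself rests on the cut-off arguments of Section 2 (the test functions $(\theta^{-1/2}-\alpha_{1}^{-1/2})_{+}$, $(\theta-\alpha_{2})_{+}$, $(\theta^{(\beta+2)/2}-\alpha_{2}^{(\beta+2)/2})_{+}$, and so on). A secondary difficulty, absent from the constant-conductivity problem of \cite{15}, is that the scalar modes $\bar{u}-A$ and $\bar{\theta}-A$ are not damped by any local estimate and must be slaved to the fluctuations through the conservation of energy \eqref{66}.
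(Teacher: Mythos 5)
There is a genuine gap at the heart of your Lyapunov step: the functional $\mathcal{E}=\int_{0}^{1}\bigl(\delta_{1}z_{x}^{2}+\delta_{2}(w-\tfrac{u_{x}}{u})^{2}+\delta_{3}\tfrac{\theta^{2\beta}\theta_{x}^{2}}{u}+\delta_{4}w^{2}\bigr)dx$ with $\delta_{4}\ll\delta_{3}\ll\delta_{2}\ll\delta_{1}$ cannot satisfy a damped differential inequality, because the feedback loop between $\int\theta_{x}^{2}$ and $\int z_{x}^{2}$ has $O(1)$ constants that no choice of weights breaks. Concretely, \eqref{65} carries $C\int\theta_{x}^{2}$ on the right with $C$ fixed by the a priori bounds $M,N$, so you need $\theta_{x}^{2}$-dissipation of size at least $\delta_{1}C$; but the only candidate in your $\mathcal{E}$ is the $\delta_{3}$-block, and the identity behind \eqref{41} is \emph{not} damped in $\int\theta^{2\beta}\theta_{x}^{2}/u$: the term $-\tfrac12\int\theta^{2\beta}\theta_{x}^{2}u_{t}/u^{2}$ splits as $-\tfrac12\int\theta^{2\beta}\theta_{x}^{2}z_{x}/u^{2}+\tfrac12\int\theta^{2\beta}\theta_{x}^{2}/u-\tfrac12\int\theta^{2\beta+1}\theta_{x}^{2}/u^{2}$, and since $\theta/u\to1$ the dissipation $\tfrac12\int\theta^{2\beta+1}\theta_{x}^{2}/u^{2}$ is cancelled to leading order by the self-interaction term $+\tfrac12\int\theta^{2\beta}\theta_{x}^{2}/u$. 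What survives is only the $\theta^{\beta}\theta_{t}^{2}$ (i.e.\ second-derivative) dissipation, which does not coercively control $\int\theta_{x}^{2}$ without a further Poincar\'e argument you do not make; and even granting one, its constant is $O(1)$ and would have to exceed $\delta_{1}C/\delta_{3}$ while its own $\int z_{x}^{2}$ error stays below $\delta_{1}/\delta_{3}$ --- two requirements that are simultaneously satisfiable only if a specific numerical inequality between the a priori constants happens to hold. This is exactly the obstruction the paper flags after Lemma 4.1: the best combined estimate \eqref{145} leaves $C_{5}\int\theta_{x}^{2}$ on the right against $C_{4}\int\theta_{x}^{2}$ on the left, and ``one cannot expect'' $C_{5}<C_{4}$. (A secondary inconsistency: \eqref{24} feeds back $C\int\theta w^{2}$, which with your ordering $\delta_{4}\ll\delta_{2}$ cannot be absorbed by the $w^{2}$-damping; the paper puts the $O(1)$ weight on $w^{2}$ and the small weight on $(w-u_{x}/u)^{2}$, the opposite of your hierarchy.)

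The missing idea is the paper's Lemma 4.2. One introduces $\psi=w+\int_{0}^{x}(u-A)d\xi-\int_{0}^{1}\int_{0}^{\xi}(u-A)d\eta\,d\xi$, so that $\psi_{t}=(z_{x}/u)_{x}$, $\int_{0}^{1}\psi\,dx=0$ and $\psi_{x}=z_{x}+(\theta-A)$; then the cross terms cancel \emph{exactly} (because $\psi$ is built with the limit constant $A$), yielding the identity \eqref{654}, i.e.\ $\frac{d}{dt}\int_{0}^{1}\bigl(\tfrac12\psi^{2}+A(\tfrac{\theta}{A}-\log\tfrac{\theta}{A}-1)\bigr)dx+A\int_{0}^{1}\bigl(\tfrac{z_{x}^{2}}{u\theta}+\tfrac{\theta^{\beta}\theta_{x}^{2}}{u\theta^{2}}\bigr)dx=0$. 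This supplies $\int\theta_{x}^{2}$-dissipation with \emph{zero} right-hand side, so it can be added with an arbitrarily large coefficient $C_{6}$ to dominate the $C_{5}\int\theta_{x}^{2}$ leftover of \eqref{145}; the only thing to verify is that the added functional is itself controlled by the existing dissipation, which is \eqref{652} and rests on the pointwise bound \eqref{121} for $(\theta-A)^{2}$ --- precisely where the energy identity \eqref{66} and the algebraic definition \eqref{A0} of $A$ enter. Your treatment of the scalar modes (which parallels \eqref{666}--\eqref{121}) and your final bootstrap are sound, and the overall architecture of smallness on $[T_{*},\infty)$ matches the paper's; but without \eqref{654}, or an equivalent ``free'' source of $\theta_{x}$-dissipation, the differential inequality you need does not follow from the estimates you cite.
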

In order to prove Theorem \ref{thm41}, we need  the following two Lemmas.

\begin{lemma}
There exist positive constants $C_{i}(1,2,3,4,5)$ and $T_1$ depending only on
$\beta$ and the initial data such that for any $t>T_1$,
\begin{equation}
\begin{split}
&\frac{d}{dt}\int_{0}^{1}\left(\frac{1}{2}\frac{w^{2}}{u}+\frac{1}{2}\Bigl(\frac{\theta}{u}-\log \frac{\theta}{u}-1\Bigr)+C_{1}\Bigl(w-\frac{u_{x}}{u}\Bigr)^{2}+C_{2}\theta_{x}^{2}
+C_{3}z_{x}^{2} \right)dx\\
&+C_{4}\int_{0}^{1}\left(\frac{w^{2}}{u}+\Bigl(\frac{\theta}{u}-\log \frac{\theta}{u}-1\Bigr)+\Bigl(w-\frac{u_{x}}{u}\Bigr)^{2}+w_{x}^{2}
+z_{x}^{2}+\theta_{x}^{2}+\frac{z_{xx}^{2}}{u}+\frac{\theta^\beta\theta_{xx}^2}{u}\right)dx\\
&\leq C_{5}\int_{0}^{1}\theta_{x}^{2}dx. \label{145}
\end{split}
\end{equation}
\end{lemma}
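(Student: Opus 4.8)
The plan is to construct a Lyapunov functional by taking a suitable linear combination of the natural energy quantities and the dissipation estimates established in Sections 2 and 3, then show it obeys a differential inequality of Gronwall type modulo the single ``bad'' term $C_5\int_0^1\theta_x^2\,dx$, which will later be absorbed by a separate estimate on $\theta_x$. Concretely, I would first differentiate $\int_0^1\bigl(\tfrac12\tfrac{w^2}{u}+\tfrac12(\tfrac{\theta}{u}-\log\tfrac{\theta}{u}-1)\bigr)dx$ in time, using \eqref{001}, \eqref{002}, \eqref{003}; this is the relative-entropy functional around the equilibrium $(A,0,A)$ (up to the scaling by $u$), and its time derivative should produce the good dissipation $-\int_0^1\bigl(\tfrac{w^2}{u}+(\tfrac{\theta}{u}-\log\tfrac{\theta}{u}-1)\bigr)dx$ plus the parabolic dissipation $-\int_0^1\tfrac{\theta^\beta\theta_x^2}{u\theta}dx$ and $-\int_0^1\tfrac{(w_x+u-\theta)^2}{u\theta}dx$, together with cross terms that are either sign-definite or can be controlled by the smallness of $w$, $u_x$, $w_x$, $\theta_x$ for $t$ large (this is where the threshold $T_1$ enters, via Lemma \ref{0004}).

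Next I would bring in the three auxiliary quantities. For $\int_0^1(w-\tfrac{u_x}{u})^2dx$ I would reuse the computation leading to \eqref{24}, which already gives $\tfrac{d}{dt}\int_0^1(w-\tfrac{u_x}{u})^2dx+\varepsilon_0\int_0^1(w-\tfrac{u_x}{u})^2dx\le C\int_0^1(\tfrac{\theta_x^2}{\theta}+\theta w^2)dx$; since $\theta$ is now bounded above and below by \eqref{007}, the right side is $\le C\int_0^1\theta_x^2dx+C\int_0^1 w^2dx$, and the $w^2$ term is dominated by the already-present good term $\int_0^1\tfrac{w^2}{u}dx$. For $\int_0^1 z_x^2dx$ I would use \eqref{65}, which for $t\ge T_0$ reads $\tfrac{d}{dt}\int_0^1 z_x^2dx+\int_0^1 z_x^2dx+\int_0^1\tfrac{z_{xx}^2}{u}dx\le C\int_0^1\theta_x^2dx$; note $z_x=w_x+u-\theta$, so controlling $\int_0^1 z_x^2dx$ plus $\int_0^1(u-A)^2+(\theta-A)^2dx$ controls $\int_0^1 w_x^2dx$. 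For $\int_0^1\theta_x^2dx$ I would multiply \eqref{003} (or its rewritten form with $\theta^\beta\theta_{xx}$) by $-\theta_{xx}$, or equivalently run the estimate behind \eqref{41}/\eqref{the2bthx2} with the coefficients now being genuine constants because of \eqref{007}; this yields $\tfrac{d}{dt}\int_0^1\theta_x^2dx+c\int_0^1(\theta_x^2+\tfrac{\theta^\beta\theta_{xx}^2}{u})dx\le C\max_x(w_x+u-\theta)^2+C\max_x(w_x+u-\theta)^4$, and the maxima are bounded, via an interpolation $\max_x f^2\le C\int_0^1 f^2dx+C\int_0^1 f^2dx\cdot\int_0^1 f_x^2dx$ with the integral factors small for $t$ large, by $\tfrac{1}{2}\int_0^1\tfrac{z_{xx}^2}{u}dx$ plus a controllable multiple of $\int_0^1 z_x^2dx$.

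The final step is bookkeeping: choose $C_3$ large enough to dominate the $z_{xx}$ and $w_x$ contributions coming from the $\theta_x$-estimate and from the entropy differentiation, then $C_2$ large relative to $C_3$ to dominate the $\theta_x^2$ feedback into the $z_x$-estimate, then $C_1$ large relative to both to absorb the $(w-\tfrac{u_x}{u})^2$-related cross terms; throughout one uses $t>T_1\triangleq\max\{T_0,\dots\}$ so that the smallness factors $\int_0^1(w^2+u_x^2+w_x^2+\theta_x^2)dx$ guaranteed by Lemma \ref{0004} make all ``small times large coefficient'' products negligible, e.g.\ at most half of any good term. Collecting everything leaves exactly one uncancelled term, $C_5\int_0^1\theta_x^2dx$ on the right, because the $\theta_x$ dissipation we produced is $\int_0^1\tfrac{\theta^\beta\theta_x^2}{u}dx$ (degenerate: it carries the extra factor $\theta^\beta$, hence cannot by itself absorb a bare $\int_0^1\theta_x^2dx$ — one keeps it on the right to be handled in the next lemma). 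I expect the main obstacle to be precisely this degeneracy together with the inhomogeneous boundary condition \eqref{1212}: the boundary value $-1$ forces one to work with the shifted quantity $w_x+u-\theta$ (equivalently $z_x$) rather than $v_x$, and prevents a clean integration by parts, so that keeping track of which cross terms are genuinely higher order (small for large $t$) versus which must be carried as the residual $C_5\int_0^1\theta_x^2dx$ requires care in the cut-off arguments near $\theta=\alpha_2$ used to generate the $\theta^\beta\theta_{xx}^2$ bound.
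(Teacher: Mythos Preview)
Your outline correctly identifies the four building blocks the paper uses --- the relative-entropy computation for $\tfrac12\tfrac{w^2}{u}+\tfrac12(\tfrac{\theta}{u}-\log\tfrac{\theta}{u}-1)$, estimate \eqref{24} for $(w-u_x/u)^2$, estimate \eqref{65} for $z_x^2$, and testing \eqref{003} against $\theta_{xx}$ --- and the role of the smallness from Lemma~\ref{0004} in fixing $T_1$.  Two points, however, are genuinely off.

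\textbf{The constant hierarchy is inverted.}  You propose $C_3$ large, $C_2$ larger, $C_1$ largest.  This cannot close: the auxiliary estimate \eqref{a21a} carries the bad term $A_5\int_0^1 \tfrac{w^2}{u}\,dx$ on its right, and the \emph{only} source of $\tfrac{w^2}{u}$-dissipation anywhere is the entropy identity \eqref{74}, whose coefficient in the lemma is fixed at~$1$.  Hence one is forced into $C_1A_5<\varepsilon_1$, i.e.\ $C_1$ \emph{small}.  The paper proceeds in exactly the opposite direction from yours: all of $C_1,C_2,C_3$ are chosen as small multiples of the entropy-dissipation constant $\varepsilon_1$ (explicitly $C_1=\tfrac{\varepsilon_1}{4A_5}$, $C_2=\tfrac{\varepsilon_1}{A_4}$, $C_3=\varepsilon_1$), so that the bad right-hand sides of the three auxiliary estimates are absorbed by the entropy's fixed dissipation, while each auxiliary estimate contributes its \emph{new} good quantity $(w-u_x/u)^2$, $\theta^\beta\theta_{xx}^2/u$, $z_{xx}^2/u$.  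Note also that the entropy differentiation does not directly produce a $-\int(\tfrac{\theta}{u}-\log\tfrac{\theta}{u}-1)\,dx$ term; that quantity enters the $C_4$-dissipation only via the Taylor bound \eqref{751}, which dominates it by $\int(z_x^2+w_x^2)\,dx$.

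\textbf{The reason the residual $C_5\int\theta_x^2\,dx$ survives is misdiagnosed.}  By this stage $\theta$ already satisfies the two-sided bound \eqref{007}, so $\theta^\beta\ge N^{-\beta}>0$ and there is no degeneracy; nor are any cut-offs near $\theta=\alpha_2$ used here.  The residual comes instead from the cross term $\tfrac12\int\tfrac{\theta^\beta\theta_xu_x}{u^3}\,dx$ in the entropy computation \eqref{741}: Young's inequality splits it into $\varepsilon\int(w-\tfrac{u_x}{u})^2\,dx+C(\varepsilon)\int\theta_x^2\,dx$, and since $\varepsilon$ must be taken small enough that $\varepsilon\int(w-\tfrac{u_x}{u})^2\,dx$ is absorbed by the tiny $C_1\varepsilon_0$-dissipation, $C(\varepsilon)$ is genuinely large.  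The available $\theta_x^2$-dissipation (of size $\varepsilon_1$ from entropy, plus at most $C_2\cdot O(1)$ via Poincar\'e from $\theta^\beta\theta_{xx}^2/u$, with $C_2$ small) cannot swallow it.  This is precisely why the paper leaves $C_5\int\theta_x^2\,dx$ on the right and introduces the separate functional of Lemma~4.2 to close the argument.
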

\begin{proof}
As a consequence of Lemmas \ref{0001}-\ref{0004}, for any $\delta>0$, there exists a $T(\delta)$ such that  for any $t>T(\delta)$,
\begin{gather}\label{sum1}
\int_{0}^{1}\Bigl((u-A)^{2}+w^{2}+(\theta-A)^{2}+z^{2}+u_{x}^{2}+w_{x}^{2}+\theta_{x}^{2}+z_{x}^{2} \Bigr)(x,t)dx\leq \delta.
\end{gather}
The constant $\delta$ will be given later in the process of the proof.

Multiplying  \eqref{002} by $\frac{w}{u}$, and integrating with respect to $x$ over $[0,1]$, by \eqref{001}, we get
\begin{equation}
\begin{split}
\frac{d}{dt}&\int_{0}^{1}\left(\frac{1}{2}\frac{w^{2}}{u}+\log u \right)dx+
\int_{0}^{1}\left(\frac{w^{2}}{u}+\frac{w_{x}^{2}}{u^{2}} \right)dx\\
&=\int_{0}^{1}\left(\frac{\theta w_{x}}{u^{2}}+\frac{wu_{x}z_{x}}{u^{3}}-\frac{1}{2}\frac{w^{2}w_{x}}{u^{2}} \right)dx.\label{52}
\end{split}
\end{equation}
Similarly, multiplying \eqref{003} by $\frac{1}{u}$, along with \eqref{001} leads to
\begin{gather}\label{thulogu}
\frac{1}{2}\frac{d}{dt}\int_{0}^{1}\left(\frac{\theta}{u}-\log u\right)dx-\frac{1}{2}\int_{0}^{1}\frac{w_{x}^{2}}{u^{2}}dx=-\int_{0}^{1}\frac{\theta w_{x}}{u^{2}}dx+\frac{1}{2}\int_{0}^{1}
\left(\frac{z_{x}}{u}+\frac{\theta^{\beta}\theta_{x}u_{x}}{u^{3}} \right)dx.
\end{gather}
By virtue of \eqref{23}, \eqref{v23} and \eqref{wvxu},
\begin{gather}
-\frac{1}{2}\frac{d}{dt}\int_{0}^{1}\log\theta dx+\frac{1}{2}\int_{0}^{1}\left(\frac{z_{x}^{2}}{u\theta}+
\frac{\theta^\beta\theta_{x}^{2}}{u\theta^{2}} \right)dx=-\frac{1}{2}\int_{0}^{1}\frac{z_{x}}{u}dx,
\end{gather}
which together with  \eqref{52} and \eqref{thulogu}, we get
\begin{equation}
\begin{split}
&\frac{d}{dt}\int_{0}^{1}\left(\frac{1}{2}\frac{w^{2}}{u}+\frac{1}{2}\Bigl(\frac{\theta}{u}-\log\frac{\theta}{u}-1 \Bigr) \right)dx+\int_{0}^{1}\left(\frac{w^{2}}{u}+\frac{1}{2}\frac{w_{x}^{2}}{u^{2}}+\frac{1}{2}\frac{z_{x}^{2}}{u\theta}+
\frac{1}{2}\frac{\theta^\beta\theta_{x}^{2}}{u\theta^{2}} \right)dx\\
&=\int_{0}^{1}\left(\frac{wu_{x}z_{x}}{u^{3}}-\frac{1}{2}\frac{w^{2}w_{x}}{u^{2}}+\frac{1}{2}\frac{\theta^{\beta}\theta_{x}u_{x}}{u^{3}} \right)dx\\
&\leq C\delta^{\frac{1}{2}} \int_{0}^{1}\left(\Bigl(w-\frac{u_{x}}{u}\Bigr)^{2}+\frac{w^{2}}{u}+w_{x}^{2}+z_{x}^{2} \right)dx+\varepsilon \int_{0}^{1}\left(\Bigl(w-\frac{u_{x}}{u}\Bigr)^{2}+\frac{w^{2}}{u} \right)dx\\
&\quad +C(\varepsilon)\int_{0}^{1}\theta_{x}^{2}dx,\label{741}
\end{split}
\end{equation}
where $\varepsilon$ will be determined later and we have utilized \eqref{008}, \eqref{007} and the following fact
\begin{gather}
\max_{x\in[0,1]}|w(x,t)|\leq \delta^\frac{1}{2},
\end{gather}
due to \eqref{maxw0} and \eqref{sum1}.\\
For the convenience of further analysis, set $\varepsilon_1\triangleq \frac{1}{2}\min\{1,\frac{1}{M^2},\frac{1}{MN},\frac{1}{MN^{2+\beta}}\}$, by \eqref{007}, \eqref{008}, we rewrite \eqref{741} as
\begin{equation}
\begin{split}
&\frac{d}{dt}\int_{0}^{1}\left(\frac{1}{2}\frac{w^{2}}{u}+\frac{1}{2}\Bigl(\frac{\theta}{u}-\log\frac{\theta}{u}-1 \Bigr) \right)dx+\varepsilon_1\int_{0}^{1}\left(\frac{w^{2}}{u}+w_{x}^{2}+z_{x}^{2}+\theta_{x}^{2}
 \right)dx\\
&\leq A_1\delta^{\frac{1}{2}} \int_{0}^{1}\left(\Bigl(w-\frac{u_{x}}{u}\Bigr)^{2}+\frac{w^{2}}{u}+w_{x}^{2}+z_{x}^{2} \right)dx+\varepsilon \int_{0}^{1}\left(\Bigl(w-\frac{u_{x}}{u}\Bigr)^{2}+\frac{w^{2}}{u} \right)dx\\
&\quad +C(\varepsilon)\int_{0}^{1}\theta_{x}^{2}dx.\label{74}
\end{split}
\end{equation}
On the other hand, since $\theta/u$ is equipped with positive upper and lower bounds,
\begin{gather}
0\leq \frac{\theta}{u}-\log \frac{\theta}{u}-1\leq A_2(\theta-u)^{2}=A_2(z_x-w_x)^{2}\leq 2A_2(z_{x}^{2}+w_{x}^{2}),\label{75}
\end{gather}
which implies that
\begin{gather}
0\leq \int_0^1(\frac{\theta}{u}-\log \frac{\theta}{u}-1)dx\leq 2A_2\int_0^1(z_{x}^{2}+w_{x}^{2})dx. \label{751}
\end{gather}
Next, we multiply \eqref{003} by $\theta_{xx}$ and integrating over $[0,1]$, we deduce from \eqref{008}, \eqref{007}, \eqref{sum1} and Young's inequality that
\begin{equation}
\begin{split}
&\frac{d}{dt}\int_{0}^{1}\theta_{x}^{2}dx+\int_{0}^{1}\frac{\theta^{\beta}\theta_{xx}^{2}}{u}dx\\
&\leq C\int_{0}^{1}\Bigl(z_{x}^{4}+z_{x}^{2}+\theta_{x}^{4}+\theta_{x}^{2}u_{x}^{2} \Bigr)dx\\
&\leq C\delta \left( \max_{x\in [0,1]}z_{x}^{2}+\max_{x\in [0,1]}\theta_{x}^{2} \right)+C\int_{0}^{1}z_{x}^{2}dx\\
&\leq A_3\delta \int_{0}^{1}\left(\frac{z_{xx}^{2}}{u}+\frac{\theta^{\beta}\theta_{xx}^{2}}{u} \right)dx+A_4\int_{0}^{1}(z_{x}^{2}+\theta_{x}^{2})dx.\label{a20a}
\end{split}
\end{equation}
Using \eqref{24} and \eqref{007}, we have
\begin{equation}
\begin{split}
&\frac{d}{dt}\int_{0}^{1}\left(w-\frac{u_{x}}{u} \right)^{2}dx+\varepsilon_0 \int_{0}^{1}\left(w-\frac{u_{x}}{u} \right)^{2}dx\\
&\leq A_5\int_{0}^{1}\left(\theta_{x}^{2}+\frac{w^{2}}{u} \right)dx.\label{a21a}
\end{split}
\end{equation}
Now taking
\begin{gather*}
\begin{split}
&C_1=\frac{\varepsilon_1}{4A_5},\,\,\,C_2=\frac{\varepsilon_1}{A_4},\,\,\,C_3=\varepsilon_1,\,\,\, C_4=\frac{\varepsilon_1}{4}\min\left\{1,\frac{1}{A_2},\frac{2}{A_4},\frac{\varepsilon_0}{4A_5}\right\},\\
&\delta=\min\left\{\frac{\varepsilon_1^2}{16A_1^2}, \frac{A_4}{2A_3}, \frac{1}{2A_3},\frac{\varepsilon_1^2\varepsilon_0^2}{64A_1^2A_5^2}\right\},\,\,\,\varepsilon=\frac{\varepsilon_1}{4}\min\left\{1,\frac{\varepsilon_0}{4A_5}\right\},
\end{split}
\end{gather*}
and multiplying \eqref{751}, \eqref{a20a}, \eqref{a21a} and \eqref{65} by $\frac{\varepsilon_1}{4A_2}$, $C_2$, $C_1$ and $C_3$ respectively, then adding them together with \eqref{74}, we obtain \eqref{145}.
\end{proof}
If an inequality $C_{5}<C_{4}$ holds, then we can easily show the exponential $L^{2}$-decay of $(w,\theta-u,z,u_{x},w_{x},\theta_{x},z_{x})$
by use of \eqref{145}. However, one cannot expect it. Therefore we need a more delicate analysis.

Set
\begin{gather}\label{psid1}
\psi(x,t)\triangleq w(x,t)+\int_{0}^{x}\Bigl(u(\xi,t)-A\Bigr)d\xi-\int_{0}^{1}\int_{0}^{\xi}\Bigl(u(\eta,t)-A\Bigr)d\eta d\xi.
\end{gather}
Using $z(x,t)$ and $\psi(x,t)$, one can rewrite \eqref{002} and \eqref{003} as
\begin{gather}\label{ptzx}
\psi_{t}=\left(\frac{z_{x}}{u} \right)_{x},
\end{gather}
\begin{gather}\label{thzu}
\theta_{t}=\frac{z_{x}^{2}}{u}+\frac{\theta z_{x}}{u}+\left(\frac{\theta^{\beta}\theta_{x}}{u} \right)_{x}.
\end{gather}
Moreover, by the definition of $\psi$, one can immediately check that
\begin{gather} \label{psi0}
\int_{0}^{1}\psi dx=0,
\end{gather}
and
\begin{gather}\label{pzxtha}
\psi_{x}=z_{x}+(\theta-A).
\end{gather}
\begin{lemma}
we have
\begin{gather}
\frac{d}{dt}\int_{0}^{1}\left(\frac{1}{2}\psi ^{2} +A\Bigl(\frac{\theta}{A}-\log \frac{\theta}{A}-1\Bigr) \right)dx+A\int_{0}^{1}
\left(\frac{z_{x}^{2}}{u\theta}+\frac{\theta^{\beta}\theta_{x}^{2}}{u\theta^{2}} \right)dx=0,\label{654}
\end{gather}
\begin{gather}
\int_{0}^{1}\left(\frac{1}{2}\psi ^{2}+\psi_{x}^{2}+A\Bigl(\frac{\theta}{A}-\log \frac{\theta}{A}-1 \Bigr) \right)dx\leq
A_6\int_{0}^{1}\left(\frac{w^{2}}{u}+\Bigl(w-\frac{u_{x}}{u} \Bigr)^{2}+w_{x}^{2}+\theta_{x}^{2}+z_{x}^{2} \right)dx,\label{652}
\end{gather}
\end{lemma}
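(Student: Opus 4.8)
The plan is to prove the two identities/estimates \eqref{654} and \eqref{652} separately, using the reformulation \eqref{ptzx}--\eqref{pzxtha} of the system in terms of $\psi$ and $z$.

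First, for the differential identity \eqref{654}, I would multiply the $\psi$-equation \eqref{ptzx} by $\psi$ and integrate over $[0,1]$; since $z_x = \psi_x - (\theta-A)$ and the boundary terms vanish (because $z_x/u = (w_x+u-\theta)/u = -1+1 = 0$ at $x=0,1$ by \eqref{1212}, wait — actually $z_x = w_x+u-\theta$, and at the boundary $(w_x+u-\theta)/u = -1$, so one must be careful: the boundary terms are $\left.\psi \frac{z_x}{u}\right|_0^1 = -\psi(1,t)+\psi(0,t)$; this is handled because $\int_0^x(u-A)d\xi$ is arranged so that $\psi$ has the right structure — in fact one should check $\psi(1,t)=\psi(0,t)$ or absorb this, but more likely the intended computation uses $\psi_x = z_x+(\theta-A)$ directly). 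The cleaner route: multiply \eqref{ptzx} by $\psi$, integrate by parts to get $\frac12\frac{d}{dt}\int_0^1\psi^2\,dx = -\int_0^1 \frac{z_x\psi_x}{u}\,dx + (\text{bdry})$, then substitute $\psi_x = z_x+(\theta-A)$ to split off $-\int_0^1\frac{z_x^2}{u}dx - \int_0^1\frac{z_x(\theta-A)}{u}dx$. Next, multiply the $\theta$-equation \eqref{thzu} by $A/\theta$ and integrate: the heat term gives $A\int_0^1\frac{\theta^\beta\theta_x^2}{u\theta^2}dx$ after integration by parts (boundary terms vanish by \eqref{11212}), the $z_x^2/u$ term pairs with $A/\theta$, and the $\theta z_x/u$ term gives $A\int_0^1\frac{z_x}{u}dx$. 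One then computes $\frac{d}{dt}\int_0^1 A(\frac{\theta}{A}-\log\frac{\theta}{A}-1)dx = \int_0^1(1-\frac{A}{\theta})\theta_t\,dx$ and combines everything; the cross terms involving $\int_0^1\frac{z_x(\theta-A)}{u}dx$ and the $z_x^2/u$ terms should cancel exactly against the contributions from the $\theta$-equation, leaving the clean identity \eqref{654} with no remainder. Verifying this exact cancellation is the main computational checkpoint, and it is also where I expect the boundary-term bookkeeping (using \eqref{psi0}, \eqref{1212}, \eqref{11212}) to be the most delicate — one must confirm no stray boundary or lower-order terms survive.

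Second, for the coercivity-type bound \eqref{652}, I would estimate each piece of the left side by the right side using the already-established smallness and the Poincaré/Sobolev-type inequalities available on $[0,1]$. For the $\psi_x^2$ term, use \eqref{pzxtha}: $\psi_x^2 \le 2z_x^2 + 2(\theta-A)^2$, and control $(\theta-A)^2$ — since $\theta\to A$ uniformly and $\bar\theta\to A$ (Lemma \ref{0002}), write $\theta - A = (\theta-\bar\theta) + (\bar\theta - A)$ and bound $\max_x(\theta-\bar\theta)^2 \le C\int_0^1\theta_x^2dx$ while $(\bar\theta-A)^2$ is itself controlled by $\int_0^1\theta_x^2\,dx$ plus the other quantities on the right (this uses the relation between $\bar\theta$, $\bar u$ and the conserved energy, via \eqref{3}, \eqref{4}, and $w$-smallness — effectively $|\bar\theta - A|$ and $|\bar u - A|$ are $o(1)$ and dominated by the $H^1$ data at time $t$). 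For the $\psi^2$ term, use \eqref{psi0} and Poincaré: $\int_0^1\psi^2 dx \le C\int_0^1\psi_x^2 dx$, reducing it to the $\psi_x^2$ estimate. For the entropy term, use \eqref{75}: $0\le \frac{\theta}{A}-\log\frac{\theta}{A}-1 \le A_2'(\theta-A)^2$, again reduced to the $(\theta-A)^2$ bound. Finally, $u_x^2$ enters the right side via $(w-\frac{u_x}{u})^2$ and $w^2$ (since $\frac{u_x}{u} = w - (w-\frac{u_x}{u})$), and $\theta_x^2, z_x^2, w_x^2$ appear directly.

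The main obstacle, I expect, is controlling the "constant offsets" $\bar u - A$ and $\bar\theta - A$ in terms of the instantaneous $H^1$-quantities on the right side of \eqref{652} rather than merely knowing they are $o(1)$: one needs a quantitative version of the arguments in Lemmas \ref{0002}--\ref{0004} showing, roughly, that $|\bar u - A| + |\bar\theta - A| \le C\int_0^1(w^2 + w_x^2 + \theta_x^2 + (w-\frac{u_x}{u})^2)dx$ up to exponentially small terms, which ultimately rests on the conserved energy identity \eqref{66} and the relation $A = E_0 - \frac12(\int_0^1 v_0)^2 - \frac{1}{24}A^2$ defining $A$. Once that linkage is made quantitative, \eqref{652} follows by assembling the pieces; \eqref{654} is then a (careful) direct computation. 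Together with \eqref{145} these two lemmas set up the Lyapunov functional whose exponential decay yields Theorem \ref{thm41}.
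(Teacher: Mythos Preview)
Your plan is essentially the paper's own argument, and it is correct; two points are worth clarifying.

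For \eqref{654}, your first instinct about the boundary terms was right and your second thought was wrong: since $z_x=w_x+u-\theta$, the boundary condition \eqref{1212} gives $(w_x-\theta)/u=-1$ at $x=0,1$, hence $z_x/u=(w_x-\theta)/u+1=0$ there. So the integration by parts in $\int_0^1\psi\bigl(\tfrac{z_x}{u}\bigr)_x\,dx$ produces no boundary contribution, and no special structure of $\psi$ at the endpoints is needed. With this, the cancellation you describe goes through exactly: after using $\psi_x=z_x+(\theta-A)$ and multiplying \eqref{thzu} by $(1-A/\theta)$, the terms $\int_0^1\tfrac{z_x^2}{u}dx$, $\int_0^1\tfrac{\theta z_x}{u}dx$, $-\int_0^1\tfrac{(\theta-A)z_x}{u}dx$ and $-A\int_0^1\tfrac{z_x}{u}dx$ combine to zero, leaving precisely \eqref{654}.

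For \eqref{652}, your reduction to the pointwise bound on $(\theta-A)^2$ is exactly what the paper does, and the ``quantitative'' link you anticipate is obtained directly from the algebraic definition of $A$ rather than by revisiting Lemmas \ref{0002}--\ref{0004}. Since $A$ satisfies $A^2+24A=24E_0-12\bigl(\int_0^1 v_0\bigr)^2$, one has $(\theta-A)(\theta+A+24)=\theta^2+24\theta-24E_0+12\bigl(\int_0^1 v_0\bigr)^2$, hence $(\theta-A)^2\le C\bigl(\theta^2+24\theta-24E_0+12(\int v_0)^2\bigr)^2$. Substituting the instantaneous identity \eqref{66} (rewritten via \eqref{004} as \eqref{666}) for $2E_0-(\int v_0)^2$ and using $12\int_0^1(x-\tfrac12)^2dx=1$ reduces the right-hand side to $C(\theta-u)^2+C\int_0^1(w^2+u_x^2+\theta_x^2)\,dx$. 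Since $\theta-u=w_x-z_x$ and $u_x^2\le C\bigl(w^2+(w-\tfrac{u_x}{u})^2\bigr)$, this gives \eqref{652} directly, with no asymptotic-in-time argument required.
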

where the positive constant $A_6$ depends only on $\beta$ and the initial data.
\begin{proof}

Multiplying \eqref{ptzx} by $\psi$ and \eqref{thzu} by $-A\theta^{-1}$, adding up these relations and integrating with respect to
$x$ over $[0,1]$ , we get \eqref{654}.

 It remains to prove \eqref{652}. First, by \eqref{psi0} and \eqref{pzxtha},
\begin{gather}
\int_{0}^{1}(\psi^{2}+\psi_{x}^{2})dx\leq 2\int_{0}^{1}\psi_{x}^{2}dx \leq C\int_{0}^{1}\Bigl(z_{x}^{2}+(\theta-A)^{2}\Bigr)dx.\label{124}
\end{gather}
Next, since $\theta$ has positive upper and lower bounds,
\begin{gather}\label{alogtha}
0\leq \frac{\theta}{A}-\log \frac{\theta}{A}-1\leq C(\theta-A)^{2}.
\end{gather}
Consequently, in order to prove \eqref{652}, it is necessary to give the estimate of $(\theta-A)^{2}$.

By \eqref{66} and \eqref{004}, it indicates that
\begin{equation}
\begin{split}\label{666}
&2E_{0}-\left(\int_{0}^{1}v_{0}dx \right)^{2}=2\int_{0}^{1}\theta dx\\
&+\int_{0}^{1}\left(w^{2}+\Bigl(\int_{0}^{x}ud\xi-\int_{0}^{1}\int_{0}^{\xi}ud\eta d\xi \Bigr)^{2}+
2w\Bigl(\int_{0}^{x}ud\xi-\int_{0}^{1}\int_{0}^{\xi}ud\eta d\xi \Bigr)  \right)dx.
\end{split}
\end{equation}
Now we deduce from  \eqref{A0}, \eqref{666}, \eqref{008} and \eqref{75} that
\begin{equation}
\begin{split}
(\theta-A)^{2}&=\left(\theta+12-2\sqrt{36+3\Biggl(2E_{0}-\Bigl(\int_{0}^{1}v_{0}(\xi)d\xi\Bigr)^{2}\Biggr)} \right)^{2}\\
&\leq C\left(\theta^{2}+24\theta-24E_{0}+12\Biggl(\int_{0}^{1}v_{0}(\xi)d\xi\Biggr)^{2} \right)^{2}\\
%&\leq C\Biggl(-12\int_{0}^{1}\biggl(w^{2}+(\int_{0}^{x}ud\xi-\int_{0}^{1}\int_{0}^{\xi}ud\eta d\xi )^{2}+24w(\int_{0}^{x}ud\xi-\int_{0}^{1}\int_{0}^{\xi}ud\eta d\xi) \biggr)dx\\
%&\quad+\theta^{2}+24\Bigl(\theta-\int_{0}^{1}\theta dx \Bigr) \Biggr)^{2}\\
&\leq C\left(\theta^{2}-12\int_{0}^{1}\Biggl(\int_{0}^{x}ud\xi-\int_{0}^{1}\int_{0}^{\xi}ud\eta d\xi \Biggr)^{2}dx\right)^{2}+
C\int_{0}^{1}\bigl(w^{2}+\theta_{x}^{2}\bigr)dx\\
%&\leq C\left|\theta^{2}-12\Bigl(\int_{0}^{1}udx \Bigr)^{2}\int_{0}^{1}\Bigl(x-\frac{1}{2} \Bigr)^{2}dx  \right|^{2}
%+C\int_{0}^{1}\bigl(w^{2}+u_{x}^{2}+\theta_{x}^{2}\bigr)dx\\
&\leq C(\theta-u)^{2}+C\int_{0}^{1}\bigl(w^{2}+u_{x}^{2}+\theta_{x}^{2}\bigr)dx\\
&\leq C(z_{x}^{2}+w_{x}^{2})+C\int_{0}^{1}\left(w^{2}+\Bigl(w-\frac{u_{x}}{u}\Bigr)^{2}+\theta_{x}^{2} \right)dx,
\label{121}
\end{split}
\end{equation}
where in the second inequality we have taken advantage of the following fact
 \begin{equation}
\begin{split}
&\left(\theta^{2}-12\int_{0}^{1}\Biggl(\int_{0}^{x}ud\xi-\int_{0}^{1}\int_{0}^{\xi}ud\eta d\xi \Biggr)^{2}dx\right)^{2}\\
&=\left(\theta^{2}-12\int_{0}^{1}\Biggl(\int_{0}^{x}(u-\bar{u})d\xi-\int_{0}^{1}\int_{0}^{\xi}(u-\bar{u})d\eta d\xi+(x-\frac{1}{2})\bar{u} \Biggr)^{2}dx\right)^{2}\\
&\leq C\left|\theta^{2}-12\Bigl(\int_{0}^{1}udx \Bigr)^{2}\int_{0}^{1}\Bigl(x-\frac{1}{2} \Bigr)^{2}dx  \right|^{2}
+C\int_{0}^{1}u_{x}^{2}dx\\
&\leq C\left(\theta-\int_{0}^{1}udx \right)^{2}+C\int_{0}^{1}u_{x}^{2}dx\\
&\leq C\left(\theta-u\right)^{2}+C\int_{0}^{1}u_{x}^{2}dx.
\end{split}
\end{equation}

Together with \eqref{124}, \eqref{alogtha} and \eqref{121}, we establish \eqref{652} and complete the proof.
\end{proof}
{\bf Proof of Theorem \ref{thm41} and Theorem \ref{thm:light}.}
Introducing
\begin{equation}
\begin{split}
\varphi(t)\triangleq\int_{0}^{1}&\Biggl(\frac{1}{2}\frac{w^{2}}{u}+\frac{1}{2}\Bigl(\frac{\theta}{u}-\log \frac{\theta}{u}-1 \Bigr)
+C_{1}\Bigl(w-\frac{u_{x}}{u} \Bigr)^{2}+C_{2}\theta_{x}^{2}+C_{3}z_{x}^{2} \\
&+C_{6}\biggl(\frac{1}{2}\psi^{2}+A\Bigl(\frac{\theta}{A}-\log \frac{\theta}{A}-1 \Bigr) \biggr) \Biggr)dx,
\end{split}
\end{equation}
where $\psi(x,t)$ is given by \eqref{psid1} and $C_6=\frac{2C_5MN^{2+\beta}}{A}$.

Because of \eqref{008} and \eqref{007}, $\varphi(t)$ is equivalent to the norm $\parallel u-A,w,\theta-A \parallel_{1,2}^{2}$.
Setting $\lambda=\frac{C_4}{2}\min\left \{2,\frac{1}{C_{1}},\frac{1}{C_{2}},\frac{1}{C_{3}},\frac{1}{A_6C_6}\right\}$,
it follows from that \eqref{145}, \eqref{654} and \eqref{652} that
\begin{gather}
\frac{d}{dt}\varphi(t)+\lambda \varphi(t)\leq 0,
\end{gather}
holds for $t>T_1$, so we finish the proof of Theorem \ref{thm41}.

 By using the original time variable and unknown functions, Theorem \ref{thm:light} is completely proved due to  Theorem \ref{thm41}.

\end{document}